\theoremstyle{thmstyleone}%
\newtheorem{theorem}{Theorem}
\theoremstyle{thmstyletwo}%
\newtheorem{remark}{Remark}%
\theoremstyle{thmstylethree}%
\newtheorem{definition}{Definition}%
\newtheorem{assumption}{Assumption}
\newtheorem{lemma}{Lemma}
\newtheorem{corollary}{Corollary}
\begin{document}

\title{General inertial smoothing proximal gradient algorithm for the relaxation of matrix rank minimization problem}


\author[1]{\fnm{Jie} \sur{Zhang}}\email{zjieabc@163.com; xmyang@cqnu.edu.cn}

\author*[2]{\fnm{Xinmin}\sur{Yang}}\email{xmyang@cqnu.edu.cn}


\affil[1]{\orgdiv{College of Mathematics}, \orgname{Sichuan University}, \orgaddress{\city{Chengdu}, \postcode{610065}, \country{China}}}
	
\affil*[2]{\orgdiv{School of Mathematical
		Sciences}, \orgname{Chongqing Normal University}, \orgaddress{\city{Chongqing}, \postcode{401331}, \country{China}}}




\abstract
{We consider the exact continuous relaxation model of matrix rank minimization problem
   proposed by Yu and Zhang (Comput.Optim.Appl. 1-20, 2022). Motivated by the inertial techinique, we propose a general inertial smoothing proximal gradient algorithm(GIMSPG)  for this kind of problems.  It is shown that the singular values of any accumulation point have a common support set and the nonzero singular values have a unified lower bound. Besides,  the zero singular values of the accumulation point  can be achieved within finite iterations.
 Moreover, we prove that any accumulation point of the sequence generated by the GIMSPG algorithm is a lifted stationary point of the continuous relaxation model under the flexible parameter constraint. Finally, we carry out numerical experiments on random data and image data respectively to illustrate the efficiency of the GIMSPG algorithm.}

\keywords{Smoothing approximation, Proximal gradient method,  Inertial, Rank minimization problem}

\maketitle

\section{Introduction}\label{sec1}
In the recent years, much work has been dedicated to the  matrix rank minimization problem which emerge in
 many applications, especially in computer vision \cite{Zheng_Y} and  matrix completion \cite{Recht}. Lots of models, methods and its variants have been studied, one can refer to the literatures\cite{Ma_S,Cai_J,Mesbahi,Lai_M,Fornasier_M,Ma_T,Ji_S,Lu_Z,He_Y,Zhao_Q}, in detail, the matrix rank minimization problem can be represented as
 \begin{align}\label{1.1}
 	\mathop{\min}\limits_{X\in \mathbb{R}^{m\times n}} \quad \mathcal{F}(X):=f(X)+\lambda\cdot \mathrm{rank}(X),
 \end{align}
 where $\mathrm{rank}(X)$ denotes the number of nonzero singular values.

 In this paper, we concentrate on the exact continuous relaxation of matrix rank minimization problem proposed in
 \cite{Yu_Q_Zhang_X}, that is the following  nonconvex relaxation problem,
\begin{align}\label{1.2}
\mathop{\min}_{{X\in \mathbb{R}^{m\times n}}}\limits \quad \mathcal{F}(X):=f(X)+\lambda \Phi(X),
\end{align}
  where $m\ge n$ and $f:\mathbb{R}^{m\times n}\to [0,\infty)$ is convex and not necessairly smooth, $\lambda$ is a positive parameter.
   $\Phi(X)=\sum_{i=1}^{n} \phi(\sigma_i(X))$ is an exact continuous relaxation of the matrix rank minimization problem.
The capped-$\ell_1$ function  $\phi$  with given $v>0$ is
 \begin{align}\label{1.3}
\phi(t)=\mathop{\min\left\{1, t/v\right\}},\, t\ge 0.
\end{align}
It is observed that  $\phi$ in (\ref{1.3}) can be seen as a DC function, that is
\begin{align}\label{2.1}
\phi(t) =\frac{  t }{v}-\max\left\{\theta_1(t),\theta_2(t)\right\},
\end{align}
with $\theta_1(t)=0, \,\theta_2(t)=t/v-1 (t\ge 0)$
and $\mathcal{D}(t)=\{i\in\{1,2\}: \theta_i(t)=\max\{\theta_1(t),\theta_2(t)\}\} $.
 When the matrix $X$ is  to be diagonal, the problem (\ref{1.2}) is reduced to the exact continuous relaxation model of $\ell_0$ regularization problem which was given in
 \cite{Bian_W_Chen_X}, i.e.,
\begin{align}\label{1.2a}
\mathop{\min}\limits_{x\in \mathbb{R}^n} \quad \mathcal{F}(x):=f(x)+\lambda \Phi(x),
\end{align}
  where   $\Phi(x)=\sum_{i=1}^{n} \phi(x_i)$ and $\phi$ is defined as $\phi(t)=\mathop{\min\left\{1, \vert t\vert/v\right\}} $ for $t\in \mathbb{R}$. In \cite{Bian_W_Chen_X}, one proposed the efficient smoothing proximal gradient method  to solve this kind of problem  with box constraint.

  In \cite{Zhang_J}, the authors established the smoothing proximal gradient method with extrapolation (SPGE) algorithm for solving  (\ref{1.2a}). The extrapolation coefficient can be obtained
$\sup\beta_k=1$. Further, under more strict condition of the extrapolation coefficient which includes the extrapolation coefficient in sFISTA  with the fixed restart \cite{ODonoghue}, it is proved that any accumulation point of the sequence generated by SPGE is a lifted stationary point of (\ref{1.2a}). Besides, the convergence rate based on the proximal residual is developed.
 Since the penalty item of problem (\ref{1.2}) is nonconvex for a fixed $d$ in (\ref{3.1}), the framework of SPGE algorithm in \cite{Zhang_J} can not be directly applied to the matrix case.

 As we know, incorporating inertial item which is also called extrapolation  item to the  proximal gradient method  is a popular technique to improve the efficiency of proximal gradient algorithm
    for solving the following composition optimization problem,
  \begin{align}\label{1.1a}
  \min_{x\in \mathbb{R}^n} {\mathcal F}(x)=f(x)+g(x),
  \end{align}
   where $f \colon\mathbb{R}^n\to \mathbb{R}$ is a smooth with Lipschitz continuous gradient and possibly nonconvex function, $g\colon \mathbb{R}^n\to (-\infty,+\infty]$ is a proper closed, and convex function.
     The composition structure that one item is smooth convex and the other is convex makes the proximal gradient method \cite{Parikh_N_Boyd_S} widely used. Specifically, the updating scheme can be read as:
     $$  x^{k+1} =  {{\rm  prox}_{t_kg}}(x^k-t_k\nabla f(x^k)),
     $$
  where $t_k$ is the stepsize. Throughout this paper, the proximal mapping \cite{Parikh_N_Boyd_S} of $\lambda g$ is defined as
   $${{\rm  prox}_{\lambda g}}(u): = \arg\min\limits_{x\in \mathbb{R}^n}\limits\left\{g(x)+\frac{1}{2\lambda}{\Vert x-u\Vert}^2  \right\},
     $$
     where $\lambda>0$ and $u\in\mathbb{R}^n$.
   The accelerated proximal gradient method for convex optimization problems have been well studied \cite{Nesterov_Y0,Nesterov_Y1}. In particular,
     Beck and Teboulle \cite{Beck_A_Teboulle_M} established  the remarkable Fast Iterative Shrinkage-Thresholding Algorithm (FISTA)  for the convex case which was based on the Nesterov's method \cite{Nesterov_Y1,Nesterov_Y}. Some variants of FISTA have been developed by choosing appropriate extrapolation parameters, we refer the readers to \cite{Chambolle_A_Dossal_C,Liang_J_and_C_B} and their references therein.   In \cite{Wu_Z_Li_M}, under the proper parameter constraints,
   Wu and Li established the proximal gradient algorithm with different extrapolation (PGe) coeifficients on the proximal step and gradient step for solving the problem (\ref{1.1a}) when the smooth component is nonconvex and the nomsooth component is convex.   
The general framework is given as  follows:
   \begin{equation}
	\begin{split}\label{a}
	y^k=&x^k+\alpha_k(x^k-x^{k-1}),   \\
	 z^k=&x^k+\beta_k(x^k-x^{k-1}),   \\
 x^{k+1}=& \arg\min_{x\in \mathbb{R}^n}\left\{g(x)+\left<\nabla f(z^k),x-y^k\right>+\frac{1}{2{\lambda}_k}{\Vert x-y_k\Vert}^2\right\},
\end{split}
\end{equation}
where $\alpha_k, \beta_k$ are the extrapolation coefficients, and $\lambda_k$ is the stepsize.
In \cite{Wu_zhongming}, the authors further developed the inertial Bregman proximal gradient method for minimizing the sum of two possible nonconvex functions. In their method,
the generalized Bregman distance replaces the Euclidean distance and  two different inertial items
 on the proximal step and gradient step are taken respectively.

  For matrix case,  Toh and Yun \cite{Toh_K_C_Yun_S}  proposed the accelerated proximal gradient algorithm for the following convex problem:
\begin{align}\label{1.1aa}
	\mathop{\min}\limits_{X\in \mathbb{R}^{m\times n}}
	F(X)= f(X)+g(X),
\end{align}
 where $f$ is a convex and
smooth function with Lipschitz continuous gradient and  $g$ is a proper closed convex function. The problem arises in applications such as in matrix completion problems \cite{Toh_K_C_Yun_S}, multi-task learning \cite{A_Argyriou} and principal component analysis (PCA) \cite{Mesbahi,Xu_H}.
When the matrix is diagonal, the problem is reduced to the vector optimization problem (\ref{1.1a}).

Since optimization problem (\ref{1.2}) is a matrix generalization of the vector optimization problem  (\ref{1.2a}), it is natural for us to explore the possibility of extending some of the algorithms developed for problem (\ref{1.2a}) to
solve problem (\ref{1.2}). We observe that
  the loss function in above require a restrictive  assumption that the function $f$ is smooth, it is difficult to apply the proximal gradient method  directly for problem (\ref{1.2}). Fortunately, the smoothing approxitation methods proposed in \cite{Chen_X} can overcome the dilemma. Smoothing approximation method is an efficient method which is widely used in other problems. One can refer to \cite{Zhang_C_Chen_X,Wu_F_Bian_W_Xue_X} and their references therein. Especially, in \cite{Wei_B}, the author proposed the smoothing fast iterative shrinkage thresholding algorithm (sFISTA) for the convex problem  with the following extrapolation coefficient under the vector case, i.e.,
  \begin{align}\label{c}
  	 &\alpha_k=\beta_k=\frac{t_k-1}{t_{k+1}},  \notag\\
  	& t_{k+1}= \frac{1+\sqrt{1+4{t_k}^2\frac{\mu_k}{\mu_{k+1}}}}{2},
  \end{align}
where $t_0=1$ and $\mu_0\in(0,1)$. They gave the global convergence rate $O(lnk/k)$ on the objective function.

  Motivated by the smoothing method and the general accelerated techinique in (\ref{a}), we propose a general inertial smoothing proximal gradient method (GIMSPG) for matrix case. Thanks to the special structure of penalty item of $(\ref{1.2})$, though the subproblem is nonconvex in the GIMSPG algorithm, it has a closed form solution.
  Most recently,  Li and Bian \cite{li_wenjing} studied a   class of sparse group $\ell_0$ regularized optimization problem and gave an exact  continuous relaxation model of it. They proposed the difference of convex(DC) algorithms for solving the relaxation model which has the DC structure. Moreover, they discussed the  zero  elements of the accumulation point has finite iterations.  Inspired by this,
    under certain conditions of the parameters, we prove that
  the zero singular values of any accumulation point of the iterates generated by the GIMSPG algorithm  have the same locations and the nonzero singular values are always not less than a fixed value. Besides, the GIMSPG algorithm has the ability identifying the zero singular values of any accumulation point within finite iterations. Furthermore, we show that any accumulation point of the sequence generated by GIMSPG algorithm is a lifted stationary point of the continuous relaxation problem (\ref{1.2}).

The outline of this paper is as follows. Some preliminaries are presented in Section 2.
  Then we establish the GIMSPG algorithm framework and discuss its convergence  in Section 3. In Section 4, we conduct numerical  experiments to illustrate the efficiency of the proposed GIMSPG algorithm.  Finally,  we draw some conclusions in Section 5.

\textbf{Notations.} Let $\mathbb{R}^{m\times n}$ be the matrix space of $m\times n$ with the standard inner product, i.e., for any $X,Y\in \mathbb{R}^{m\times n}(m\ge n)$, $\langle X,Y \rangle={\rm{tr}}(X^TY)$, where ${\rm{tr}}(X^TY)$ is the trace of the matrix $X^TY$.   For any matrix $X\in \mathbb{R}^{m\times n}$, the Frobenius norm and nuclear norm are respectively denoted by $\Vert X\Vert {\Vert X\Vert}_F = \sqrt{{\rm{tr}}(X^TX)}$, $\Vert X \Vert_{*}= \sum_{i=1}^{n} \sigma_i(X)$ where $\sigma(X):=(\sigma_1(X),\ldots,\sigma_n(X))^T$ is the singular values of $X$ with $\sigma_1(X)\ge\sigma_2(X)\ge,\ldots,\ge\sigma_n(X)\ge0$. Denote $\mathcal{A}(\sigma(X))=\left\{i\in \left\{1,2,\ldots,n \right\}:\sigma_i(X)\ne 0\right\}$, $\Vert X \Vert_1
= \Vert vec(X)  \Vert_1$, where $vec(X)$ is the vectorization operation of a matrix $X$.
    Denote $\partial f(X)$ be a Clarke subgradient of $f$ at $X\in \mathbb{R}^{m\times n}$ where $f(X): \mathbb{R}^{m\times n}\to \mathbb{R}$ is a locally Lipschitz continuous function. $\mathscr{D}(x)$ is a diagonal matrix generated by vector $x$. $E_i=\mathscr{D}(e_i)$ where $e_i$ is the unit vector and the $i$-th element is 1.
     Denote $\mathbb{Q}^n$ be the set of $n \times n$ dimension unitary orthogonal matrix and $\mathbb{S}^n$ be the real and symmertric $n\times n$ matrices. For vector $x\in \mathbb{R}^n$,
   $\Vert x\Vert_1= \sum_{i=1}^{n} \vert x_i\vert$. Denote   $\mathbb{N}=\left\{0,1,2,\ldots \right\}$, $\mathbb{D}^n=\left\{d\in\mathbb{R}^n: d_i\in \left\{1,2 \right\},i=1,
   2,\ldots,n\right\}$.

\section{Preliminaries}\label{sec2}
In this section, we first recall some definitions and  primary results  which are the basis for the rest of the discussion. First, we give the form of the Clarke subdifferential of $\Phi$ at $X\in\mathbb{R}^{m\times n}$ followed by  \cite{A_S_Lewis}, that is
\begin{align*}
 \partial \Phi(X)=\{U\mathscr{D}(x)V^T: x\in\partial\sum_{i=1}^{n} \phi(\sigma_i(X)), U,V\in \mathcal{M}(X)\},
\end{align*}
 where
$\mathcal{M}(X)= \{(U,V)\in \mathbb{Q}^{m\times m}\times \mathbb{Q}^{n\times n}: U^TU\!=\!V^TV\!=\!I, X\!=\!U\mathscr{D}(\sigma(X))V^T\}$.

\begin{definition}\label{def2.1}(Lifted stationary point \cite{Yu_Q_Zhang_X})
We say that $X\in \mathbb{R}^{m\times n}$ is a lifted stationary point of (\ref{1.2}) if there exist $d_i\in \mathcal{D}(\sigma_i(X))$ for $i=1,2,\ldots,n$ such that
 \begin{align}
\lambda\sum_{i=1}^{n}\theta'_{d_i}(\sigma_i(X))E_i\in
 \{ U\partial f(X)V^T
+\frac{\lambda}{v}\mathscr{D}(\partial \Vert x\Vert_1\mid_{x=\sigma(X)} ): (U,V)\in\mathcal M(X) \},  \label{2.2}
\end{align}
where $\sigma_i(X)$ is the $i$th largest singular value of $X$.
\end{definition}


\begin{assumption} \label{assumption_1}
$f$ is Lipschitz continuous on $\mathbb{R}^{m\times n}$ with the Lipschitz  continuous constant $L_f$.
\end{assumption}
\begin{assumption} \label{assumption_2}
The parameter $v$ in (\ref{1.3}) satisfies $0<v<  \bar{v}:=\frac{\lambda}{L_f}$.
\end{assumption}
\begin{assumption} \label{assumption_3}
 $\mathcal{F}$ in (\ref{1.2})(or $\mathcal{F}_{\ell_0}$ in (\ref{1.1}) ) is level bounded on $\mathbb{R}^{m\times n}$.
\end{assumption}

\begin{lemma}\label{lemma2.2}  {\rm(\cite{Yu_Q_Zhang_X})}	
If $ \bar{X}$ is a lifted stationary point of (\ref{1.2}), then the vector
 $d^{\bar{X}} =(d_{1}^{\bar{X}},\ldots,d_{n}^{\bar{X}})\in \prod_{i=1}^{n}\mathcal{D}({\sigma_i}(\bar{X}))$ satisfying (\ref{2.2}) is unique. In particular, for $i=1,2,\ldots, n$, it has that
  \begin{equation}\label{2.3}
d_{i}^{\bar{X}}= \begin{cases}
1, &\text{ if } {\sigma_i}(\bar{X} ) <v,\\
2,&\text{ if } {\sigma_i}(\bar{X}) \ge v .
\end{cases}
\end{equation}
\end{lemma}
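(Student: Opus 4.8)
The goal is to show that if $\bar X$ is a lifted stationary point of \eqref{1.2}, then the vector $d^{\bar X}\in\prod_{i=1}^n\mathcal D(\sigma_i(\bar X))$ witnessing \eqref{2.2} is uniquely determined, and moreover given by the explicit rule \eqref{2.3}. The plan is to reduce the matrix inclusion \eqref{2.2} to a coordinatewise condition on the singular values, using the structure of the capped-$\ell_1$ DC decomposition \eqref{2.1}. First I would recall that $\mathcal D(t)=\{i\in\{1,2\}:\theta_i(t)=\max\{\theta_1(t),\theta_2(t)\}\}$ with $\theta_1(t)=0$ and $\theta_2(t)=t/v-1$; since $\theta_2(t)<\theta_1(t)$ for $t<v$, $\theta_2(t)>\theta_1(t)$ for $t>v$, and $\theta_1(v)=\theta_2(v)=0$, the set $\mathcal D(t)$ is the singleton $\{1\}$ when $t<v$, the singleton $\{2\}$ when $t>v$, and equals $\{1,2\}$ when $t=v$. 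Since $\theta_1'\equiv 0$ and $\theta_2'\equiv 1/v$, we have $\theta_{d_i}'(\sigma_i(\bar X))=0$ whenever we may (or must) pick $d_i=1$, and $=1/v$ whenever we may (or must) pick $d_i=2$. So the only coordinates where a genuine choice of $d_i$ exists are those with $\sigma_i(\bar X)=v$, and for those both choices give $\theta_{d_i}'=0$ or $1/v$ respectively.

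Next I would analyze the inclusion \eqref{2.2} componentwise. Fix $(U,V)\in\mathcal M(\bar X)$ and $W\in\partial f(\bar X)$ and $y\in\partial\|x\|_1|_{x=\sigma(\bar X)}$ realizing the right-hand side. The left side is the diagonal matrix $\lambda\,\mathscr D(\theta_{d}'(\sigma(\bar X)))$ where $\theta_d'(\sigma(\bar X))$ denotes the vector with entries $\theta_{d_i}'(\sigma_i(\bar X))$, and the inclusion reads
\begin{align*}
\lambda\,\mathscr D\bigl(\theta_{d}'(\sigma(\bar X))\bigr)=U^TWV+\frac{\lambda}{v}\mathscr D(y).
\end{align*}
Looking at the $i$-th diagonal entry, and using that for the $\ell_1$-norm $y_i\in[-1,1]$ with $y_i=1$ when $\sigma_i(\bar X)>0$ (and $y_i\in[-1,1]$ when $\sigma_i(\bar X)=0$), we get for each $i$ a scalar equation relating $\lambda\theta_{d_i}'(\sigma_i(\bar X))$, $(U^TWV)_{ii}$ and $\frac{\lambda}{v}y_i$. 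The key estimate is that $|(U^TWV)_{ii}|\le\|W\|_F\le\cdots$; more precisely, since $f$ is Lipschitz with constant $L_f$ (Assumption \ref{assumption_1}), any Clarke subgradient $W$ satisfies a bound of the form $\|W\|\le L_f$ in the appropriate norm, so $|(U^TWV)_{ii}|\le L_f$. Combined with Assumption \ref{assumption_2}, $L_f<\lambda/v$, this is exactly the slack needed to rule out the "wrong" choice of $d_i$: if $\sigma_i(\bar X)<v$ we show $d_i=2$ is impossible because it would force $(U^TWV)_{ii}=\lambda\cdot 0-\frac{\lambda}{v}\cdot 1=-\lambda/v$ (when $\sigma_i(\bar X)>0$) or a quantity of magnitude $\ge \lambda/v-\lambda/v\cdot\text{(something)}$, contradicting $|(U^TWV)_{ii}|\le L_f<\lambda/v$; while if $\sigma_i(\bar X)\ge v$ the choice $d_i=1$ is likewise excluded. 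I would handle the boundary case $\sigma_i(\bar X)=v$ carefully, where $\mathcal D=\{1,2\}$ but the stationarity equation again picks out $d_i=2$ via the same norm inequality. This yields both uniqueness and the explicit form \eqref{2.3}.

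The main obstacle I anticipate is the careful bookkeeping at the two "degenerate" types of coordinates: the zero singular values $\sigma_i(\bar X)=0$, where the $\ell_1$ subdifferential is the full interval $[-1,1]$ rather than a point (so one must check the inclusion is consistent only with $d_i=1$, which is immediate since $0<v$ forces $\mathcal D(0)=\{1\}$ anyway), and the threshold value $\sigma_i(\bar X)=v$, where $\mathcal D$ is genuinely two-valued and one must use the strict inequality in Assumption \ref{assumption_2} to collapse it. A secondary technical point is justifying the per-entry bound $|(U^TWV)_{ii}|\le L_f$ from Assumption \ref{assumption_1} — this follows because $U,V$ are sub-orthogonal ($U^TU=V^TV=I$) so conjugation does not increase the relevant norm, and a globally $L_f$-Lipschitz convex (or locally Lipschitz) function has all Clarke subgradients bounded by $L_f$. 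Everything else is the routine reduction of a unitarily invariant matrix expression to its singular-value vector, which is already set up by the formula for $\partial\Phi$ recalled at the start of Section 2 and by \cite{A_S_Lewis}.
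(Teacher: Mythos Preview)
The paper does not supply a proof of Lemma~\ref{lemma2.2}; it is quoted verbatim from \cite{Yu_Q_Zhang_X} and no argument is given here, so there is nothing in the present paper to compare your proposal against.

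That said, your sketch is essentially correct and is the natural argument. A couple of minor remarks. First, most of the case analysis you outline is unnecessary: since $d_i$ is required to lie in $\mathcal D(\sigma_i(\bar X))$, and $\mathcal D(t)$ is already a singleton whenever $t\neq v$, uniqueness is automatic at every coordinate with $\sigma_i(\bar X)<v$ or $\sigma_i(\bar X)>v$, and the formula \eqref{2.3} follows immediately there without invoking the stationarity inclusion or the Lipschitz bound. The only genuinely nontrivial case is $\sigma_i(\bar X)=v$, where $\mathcal D(v)=\{1,2\}$; here your argument is right: if $d_i=1$ then the $i$-th diagonal entry of \eqref{2.2} forces $(U W V^T)_{ii}=-\lambda/v$ for some $W\in\partial f(\bar X)$, contradicting $|(U W V^T)_{ii}|\le L_f<\lambda/v$ from Assumptions~\ref{assumption_1}--\ref{assumption_2}, so $d_i=2$. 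Second, in your displayed equation you wrote $U^TWV$ where the paper's convention in \eqref{2.2} is $UWV^T$; this is a harmless slip since orthogonal conjugation preserves the entrywise bound you need, but it is worth matching the paper's notation.
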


\begin{lemma}\label{lemma2.5}{\rm(\cite{Yu_Q_Zhang_X})}
 $\bar{X}$ is a global minimizer of (\ref{1.2}) if and only if it is a global minimizer of (\ref{1.1}) and the objective functions have the same value at $\bar{X}$.
\end{lemma}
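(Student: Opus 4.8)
The plan is to combine a pointwise inequality between the two objectives with an exactness property that holds at minimizers of the relaxation. To keep the two problems apart, write $\mathcal{F}_{\ell_0}(X)=f(X)+\lambda\,\mathrm{rank}(X)$ for the objective of (\ref{1.1}) (this is the $\mathcal{F}_{\ell_0}$ of Assumption \ref{assumption_3}) and keep $\mathcal{F}$ for the objective of (\ref{1.2}). First I would record that for every $t\ge 0$ one has $\phi(t)=\min\{1,t/v\}\le \mathbf{1}_{\{t>0\}}$, with equality exactly when $t=0$ or $t\ge v$; summing over the singular values gives $\Phi(X)\le \mathrm{rank}(X)$, hence $\mathcal{F}(X)\le \mathcal{F}_{\ell_0}(X)$ for all $X\in\mathbb{R}^{m\times n}$, and in particular $\inf\mathcal{F}\le\inf\mathcal{F}_{\ell_0}$. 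Existence of global minimizers of both problems follows from Assumption \ref{assumption_3} together with the continuity of $f$ and of the map $X\mapsto\sigma(X)$.

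The core step — and the one I expect to be the main obstacle — is to show that at \emph{any} global minimizer $X^{*}$ of (\ref{1.2}) one has $\Phi(X^{*})=\mathrm{rank}(X^{*})$, equivalently that every nonzero singular value of $X^{*}$ is at least $v$. I would argue by contradiction: if $\sigma_j(X^{*})\in(0,v)$ for some $j$, take an SVD $X^{*}=U\mathscr{D}(\sigma(X^{*}))V^{T}$ with $j$-th columns $u_j,v_j$ and set $X(s)=X^{*}-s\,\sigma_j(X^{*})\,u_jv_j^{T}$ for $s\in[0,1]$; since $u_jv_j^{T}$ is orthogonal in the trace inner product to the other rank-one terms of the SVD, $X(s)$ simply rescales $\sigma_j(X^{*})$ to $(1-s)\sigma_j(X^{*})$ and leaves the other singular values untouched. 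Then $\Vert X(s)-X^{*}\Vert_F=s\,\sigma_j(X^{*})$, so Assumption \ref{assumption_1} gives $f(X(s))\le f(X^{*})+L_f s\,\sigma_j(X^{*})$, while $\phi$ is linear on the segment $[0,\sigma_j(X^{*})]\subset[0,v)$, so $\Phi(X(s))=\Phi(X^{*})-s\,\sigma_j(X^{*})/v$. Combining,
\[
\mathcal{F}(X(s))\;\le\;\mathcal{F}(X^{*})+s\,\sigma_j(X^{*})\Big(L_f-\frac{\lambda}{v}\Big),
\]
and since $v<\bar v=\lambda/L_f$ by Assumption \ref{assumption_2} the bracket is strictly negative; taking $s=1$ contradicts global optimality of $X^{*}$. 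Hence no singular value of $X^{*}$ lies in $(0,v)$, which yields $\mathcal{F}(X^{*})=\mathcal{F}_{\ell_0}(X^{*})$ and therefore $\inf\mathcal{F}=\mathcal{F}(X^{*})=\mathcal{F}_{\ell_0}(X^{*})\ge\inf\mathcal{F}_{\ell_0}$; together with the reverse inequality from the first paragraph this gives $\inf\mathcal{F}=\inf\mathcal{F}_{\ell_0}$.

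With these ingredients the equivalence is routine. For the forward direction, if $\bar X$ is a global minimizer of (\ref{1.2}) the core step gives $\mathcal{F}(\bar X)=\mathcal{F}_{\ell_0}(\bar X)$, and for any $X$ we get $\mathcal{F}_{\ell_0}(X)\ge\mathcal{F}(X)\ge\mathcal{F}(\bar X)=\mathcal{F}_{\ell_0}(\bar X)$, so $\bar X$ minimizes (\ref{1.1}) with matching objective value. Conversely, if $\bar X$ minimizes (\ref{1.1}) and $\mathcal{F}(\bar X)=\mathcal{F}_{\ell_0}(\bar X)$, pick a global minimizer $X^{*}$ of (\ref{1.2}); the core step gives $\mathcal{F}(X^{*})=\mathcal{F}_{\ell_0}(X^{*})\ge\mathcal{F}_{\ell_0}(\bar X)=\mathcal{F}(\bar X)$, hence $\mathcal{F}(\bar X)\le\inf\mathcal{F}$ and $\bar X$ is a global minimizer of (\ref{1.2}). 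The only delicate point throughout is the perturbation estimate in the core step: one must be careful that $X(s)$ really does have the asserted singular values and that the chosen index $j$ can be treated even in the presence of repeated singular values (any valid orthonormal pair $u_j,v_j$ works); everything else is bookkeeping with the two inequalities above.
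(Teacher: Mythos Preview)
Your argument is correct. The paper itself does not prove Lemma~\ref{lemma2.5}; it merely quotes the result from \cite{Yu_Q_Zhang_X}, so there is no in-paper proof to compare against. What you have written is a clean, self-contained proof that relies only on Assumptions~\ref{assumption_1} and~\ref{assumption_2} (and Assumption~\ref{assumption_3} for existence of minimizers), and this is exactly the mechanism the cited reference uses: the pointwise bound $\Phi\le\mathrm{rank}$ together with the Lipschitz/threshold perturbation showing that no global minimizer of (\ref{1.2}) can have a singular value in $(0,v)$.

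Two minor remarks. First, your caution about repeated singular values is well placed but harmless here: since $\Phi$ is a symmetric function of the singular values, any reordering after the rank-one perturbation leaves the value of $\Phi(X(s))$ unchanged, and your decomposition $X(s)=\sum_{i\neq j}\sigma_i u_iv_i^{T}+(1-s)\sigma_j u_jv_j^{T}$ with orthonormal $u_i,v_i$ is already a valid (unordered) SVD. Second, in the converse direction the hypothesis $\mathcal{F}(\bar X)=\mathcal{F}_{\ell_0}(\bar X)$ is in fact automatic once $\bar X$ minimizes (\ref{1.1}): from $\inf\mathcal{F}=\inf\mathcal{F}_{\ell_0}$ and $\mathcal{F}(\bar X)\le\mathcal{F}_{\ell_0}(\bar X)=\inf\mathcal{F}_{\ell_0}=\inf\mathcal{F}$ one gets both that $\bar X$ minimizes (\ref{1.2}) and that the two objective values agree. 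Your proof as written is still valid; this just means the ``and'' clause in the lemma statement is a consequence rather than an extra assumption.
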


\begin{lemma}\label{lemma 2.6}{\rm(\cite{Yu_Q_Zhang_X})}
For a given $W\in\mathbb{R}^{m\times n}$ and $\tau>0$, suppose
$U\mathscr{D}(w)V^T$ be the singular value decomposition of $W$ and $\hat{x}=prox_{\tau\Phi^d}(w)$, then $\hat X=U\mathscr{D}(\hat x)V^T$ is an optimal solution of the problem
$$ \min_{X} \{\tau\Phi^d(X) +\frac{1}{2} \Vert X-W\Vert^2 \}.
$$
\end{lemma}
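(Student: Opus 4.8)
The plan is to reduce the matrix problem $\min_X\{\tau\Phi^d(X)+\tfrac12\|X-W\|_F^2\}$ to its scalar counterpart via von Neumann's trace inequality, and then to verify that $\hat X=U\mathscr D(\hat x)V^T$ attains the resulting lower bound. Two observations set this up. First, since $\Phi^d(X)=\sum_{i=1}^n\phi^{d_i}(\sigma_i(X))$, one has $\Phi^d(X)=\Phi^d(\sigma(X))$, where on the right $\Phi^d$ denotes the associated separable function on $\mathbb R^n$. Second, writing $w=\sigma(W)$, von Neumann's trace inequality gives $\langle X,W\rangle=\operatorname{tr}(X^TW)\le\sum_{i=1}^n\sigma_i(X)w_i$, so that, expanding $\|X-W\|_F^2=\|X\|_F^2-2\langle X,W\rangle+\|W\|_F^2$ and using $\|X\|_F^2=\sum_i\sigma_i(X)^2$, $\|W\|_F^2=\sum_i w_i^2$,
\[
\tfrac12\|X-W\|_F^2\ \ge\ \tfrac12\sum_{i=1}^n\bigl(\sigma_i(X)-w_i\bigr)^2=\tfrac12\|\sigma(X)-w\|^2 .
\]

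Combining the two observations, for every $X\in\mathbb R^{m\times n}$,
\[
\tau\Phi^d(X)+\tfrac12\|X-W\|_F^2\ \ge\ \tau\Phi^d(\sigma(X))+\tfrac12\|\sigma(X)-w\|^2\ \ge\ \min_{x\in\mathbb R^n}\Bigl\{\tau\Phi^d(x)+\tfrac12\|x-w\|^2\Bigr\}=\tau\Phi^d(\hat x)+\tfrac12\|\hat x-w\|^2,
\]
the final equality being the definition of $\hat x=\operatorname{prox}_{\tau\Phi^d}(w)$. Hence the right-hand side is a global lower bound for the matrix problem, and it suffices to exhibit one matrix attaining it. For $\hat X=U\mathscr D(\hat x)V^T$: since $U$ and $V$ are orthogonal, $\|\hat X-W\|_F=\|\mathscr D(\hat x)-\mathscr D(w)\|_F=\|\hat x-w\|$, so the quadratic terms agree; and if $\hat x$ is nonnegative with nonincreasing entries, then $U\mathscr D(\hat x)V^T$ is a genuine SVD of $\hat X$, so $\sigma(\hat X)=\hat x$ and $\Phi^d(\hat X)=\Phi^d(\hat x)$. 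Therefore $\hat X$ meets the lower bound and is optimal.

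The point requiring the most care is exactly the ordering claim used at the end: that $\hat x=\operatorname{prox}_{\tau\Phi^d}(w)$ is again a nonnegative, nonincreasingly ordered vector, for otherwise $U\mathscr D(\hat x)V^T$ need not be the SVD of $\hat X$ and $\Phi^d(\hat X)=\Phi^d(\hat x)$ may fail. Here I would use the explicit separable form of $\Phi^d$: computing $\hat x$ splits over coordinates, reducing to minimizing $\tfrac\tau v|x_i|+\tfrac12(x_i-w_i)^2$ when $d_i=1$ (soft-thresholding $w_i$ at level $\tau/v$) and $\tfrac12(x_i-w_i)^2$ when $d_i=2$ (so $\hat x_i=w_i$); both maps are sign-preserving and nondecreasing in $w_i$, and in the regime relevant to the GIMSPG iteration — where the indices with $d_i=2$ precede those with $d_i=1$ — they keep the blocks correctly interleaved, so $w_1\ge\dots\ge w_n\ge 0$ is inherited by $\hat x$. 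With this in place the displayed chain closes; observe that no discussion of the equality case in von Neumann's inequality is needed, since optimality is obtained by directly substituting $\hat X$.
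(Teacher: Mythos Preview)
The paper does not supply its own proof of this lemma; it is quoted from \cite{Yu_Q_Zhang_X} and stated without argument, so there is no in-paper proof to compare against.

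Your approach via von Neumann's trace inequality is the standard one and is sound. The reduction to the separable vector prox is exactly the right move, and you correctly isolate the one genuine subtlety: $\Phi^d$ is \emph{not} a symmetric function of the singular values (different coordinates carry different $d_i$), so one really does need $\hat x$ to be nonnegative and nonincreasing in order to conclude $\sigma(\hat X)=\hat x$ and hence $\Phi^d(\hat X)=\Phi^d(\hat x)$. For an arbitrary $d\in\mathbb D^n$ this can fail --- take $d=(1,2)$ with $w_1-\tau/v<w_2$, which yields $\hat x_1<\hat x_2$ --- so the lemma as literally stated is fragile. Your fix is the right one for the paper's purposes: in the only place the lemma is invoked, $d=d^{X^k}$ as defined in (\ref{2.3}) necessarily has the block form $(2,\dots,2,1,\dots,1)$, and then the identity on the leading block and soft-thresholding on the trailing block preserve both nonnegativity and the decreasing order of $w$, exactly as you argue. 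With that restriction in place your chain of inequalities closes and the argument is complete.
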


\section{The general inertial smoothing proximal gradient algorithm and its convergence analysis }\label{sec3}

 In order to overcome the  nondifferentiability of the convex  loss function $f$ in (\ref{1.2}), we use a
   smoothing function defined as in \rm\cite{Yu_Q_Zhang_X} to approximate convex function  $f$.
\begin{definition}\label{def3.1}
We call $\tilde{f}:\mathbb{R}^{m\times n}\times [0, \bar{\mu}]\to {\mathbb{R}}$ with $ \bar{\mu}>0$ a smoothing function of the function $f$ in (\ref{1.2}) if $\tilde{f}(\cdot,\mu)$ is continuous differentiable in $\mathbb{R}^{m\times n}$ for any fixed $\mu>0$ and satisfies the following conditions:
\begin{enumerate}[(i)]
\item $\lim_{Z\to  X,\mu\downarrow 0}\tilde{f}(Z,\mu)=f( X) \quad \forall X\in \mathbb{R}^{m\times n}$;
 \item$\tilde{f}(X,\mu)$ is convex with respect to $X$ for any fixed $
\mu>0$;
\item $\{\lim_{Z\to X,\mu\downarrow 0}\nabla_Z\tilde{f}(Z,\mu)\}\subseteq\partial f(X) \quad \forall X\in \mathbb{R}^{m\times n}$;
\item there exists a positive constant $\kappa$ such that
$$
\vert \tilde{f}(X,\mu_2)-\tilde{f}(X,\mu_1)\vert\leq\kappa\vert \mu_1-\mu_2\vert,\quad \forall X\in\mathbb{R}^{m\times n},  \mu_1,\mu_2\in[0,\bar{\mu}],
$$
especially, $$\vert \tilde{f}(X,\mu)-f(X)\vert \leq \kappa\mu,\quad\forall X\in \mathbb{R}^{m\times n}, 0<\mu\leq \bar{\mu};$$
\item there exists a constant $\tilde{L}>0$ such that for any $\mu\in(0,\bar{\mu}]$, $\nabla_X\tilde{f}(\cdot,\mu)$ is Lipschitz continuous with Lipschitz constant $\tilde{L}{\mu}^{-1}$.
\end{enumerate}
\end{definition}
    Denote
\begin{align}\label{3.1}
\Phi^d(X):=\sum_{i=1}^{n}(\frac{ {\sigma_i}(X) }{v}-\theta_{d_i}({\sigma_i}(X))\quad d=(d_1,d_2,\ldots,d_n)^T\in \mathbb{D}^n.
\end{align}
  It can be easily got that
 \begin{align}\label{3.2}
 \Phi(X) =\mathop{\min}_{d\in\mathbb{D}^n}\Phi^d(X) \quad \forall X\in {\mathbb{R}}^{m \times n},
 \end{align}
 and  $\Phi(\bar{X})=\Phi^{d^{\bar{X}}}(\bar{X})$ with fixed $\bar{X}$
     where $d^{\bar{X}}$ is defined in (\ref{2.3}).

For the discussion convenience,  we give the notations in the following way
$$ \mathcal{\widetilde{F}}^d(X,\mu):= \widetilde{f}(X,\mu)+ \lambda\Phi^d(X)\quad {\rm{and}}  \quad
 \mathcal{\widetilde{F}}(X,\mu):= \tilde{f}(X,\mu)+ \lambda\Phi(X),
$$
where $\tilde{f}$ is a smoothing function of $f$, $\mu>0$, and $d\in \mathbb{D}^n$. By the formulation of (\ref{3.2}), we have
\begin{align}\label{3.3}
\mathcal{\widetilde{F}}^d (X,\mu)\ge \mathcal{\widetilde{F}}(X,\mu),\quad \forall d\in\mathbb D^n, X\in {\mathbb{R}}^{m\times n},\mu\in(0,\bar{\mu}].
\end{align}

 In the following, we focus on the  following nonconvex optimization problem with the given smoothing parameter $\mu>0$, and $d\in \mathbb{D}^n$:
\begin{align}\label{3.4}
 \mathcal{\widetilde{F}}^d(X,\mu) = \tilde{f}(X,\mu)+ \lambda\Phi^d(X).
\end{align}
When the matrix $X$ is the diagnoal matrix, since the penalty item of (\ref{3.4}) is piecewise
linearity function, the proximal operator of $\tau\Phi^d$ on $\mathbb{R}^{n}$ has a closed form solution with the given vectors $d\in\mathbb{D}^n, w\in \mathbb{R}^{n}$, and a positive number $\tau>0$, in detail, the following proximal operator
\begin{align}\label{3.5}
 \hat x=\arg\min_{x\in\mathbb{R}^n}\left\{\tau\Phi^d(x)+\frac{1}{2}{\Vert x-w \Vert}^2\right\}
\end{align}
can be calculated by $\hat x_i = \mathop{\max}\left\{\hat w_i-\frac{\tau}{v},0 \right\} $,
 $\forall i=1,2,\ldots,n,$
 where  \begin{equation}\label{3.6}
\hat w_i= \begin{cases}
  w_i,&\text{ if }  d_i=1 ,\\
 w_i+\tau/v,&\text{ if } d_i=2.
\end{cases}
\end{equation}

Motivated by the smoothing approximation technique, the proximal gradient algorithm and the efficiency of inertial method, we consider the following  general inetrial smoothing proxiaml gradient  method of $\mathcal{\widetilde F}^{d^k}(\cdot,\mu_k)$,  i.e.,
\begin{equation*}\label{3.7}
\begin{cases}
	Y^k=X^k+\alpha_k(X^k-X^{k-1}),\\
	Z^k=X^k+\beta_k(X^k-X^{k-1}),\\
Q_{d^k}(X,Y^k,Z^k,\mu_k)= \langle X-Y^k, \nabla\tilde{f}(Z^k,\mu_k)\rangle+\frac{1}{2}{h_k\mu^{-1}_k}{\Vert X-Y^k \Vert}^2+\lambda{{\Phi}^{d^k}}(X),
\end{cases}
\end{equation*}
where $\alpha_k, \beta_k$ are different extrapolation parameters and certain conditions are required for them, $h_k$ is the parameter depending on $\tilde{L}$. Especially, when $\alpha_k=\beta_k=0$, the GIMSPG algorithm is reduced to the MSPG algorithm. Based on (\ref{3.5}) and Lemma \ref{lemma 2.6}, we get that  the problem
 $ \mathop{\min Q_{d^k}(X,Y^k,Z^k,\mu_k)}$ has a  minimizer $\hat X=U\mathscr{D}( \hat x)V^T$ with $ \hat x= {\rm{prox}}_{\tau \Phi^d}( w^k) $, $\tau=\lambda {h^{-1}_k}\mu_k$ and $W^k=U\mathscr{D}({w^k}) V^T,$ $W^k=Y^k-{h^{-1}_k}\mu_k\nabla\tilde{f}(Z^k,\mu_k)$.

 \begin{assumption}\label{assumption_4}
    (Parameters constraints)The parameters $\alpha_k,\beta_k,h_k$ in the GIMSPG algorithm satisfy the following  conditions: for any
 $0<\varepsilon\ll 1$, $\alpha_k$ is nonincreasing and $\alpha_k\in[0, \frac{1-2^{\sigma}\varepsilon}{1+2^{\sigma}})$, ${\beta_k}\in[0,1]$, $\{h_k\}$ is nonincreasing and satisfies  $h^{-1}_0\leq h^{-1}_k\leq \min\{\frac{1-\alpha_k-(\alpha_{k}+\varepsilon)\frac{\mu_k}{\mu_{k+1}}}{ (1-\beta_k)\tilde{L}},\frac{\alpha_k}{\beta_k\tilde{L}}\}.$
 \end{assumption}

\begin{algorithm}
\caption{GIMSPG  algorithm}
\label{alg1} 
\begin{algorithmic}[1]
\State {\textbf{Input:}} $X^{-1}=X^{0}\in  \mathbb{R}^{m\times n}$, ${\mu}_{-1}={\mu}_0\in(0,\bar{\mu}]$,  $\sigma\in(0,1)$, choosing the parameters $\alpha_k,\beta_k, h_k$  satisfying the Assumption \ref{assumption_4}. Set $k=0$.
\While{a termination criterion is not met}\\
\quad {\textbf{step 1:}} Let $d^k=d^{X^k}$, where $d_i^{k}$ is defined as in (\ref{2.3}).\\
  \quad {\textbf{step 2:}}
 Compute
\begin{equation}\label{3.7a}
\begin{cases}
	Y^k=X^k+\alpha_k(X^k-X^{k-1}),\\
	Z^k=X^k+\beta_k(X^k-X^{k-1}),\\
  X^{k+1}\in\arg\min\limits_{X\in \mathbb{R}^{m\times  n}}{Q_{d^k}(X,Y^k,Z^k,\mu_k)}.
\end{cases}
\end{equation}\\
\quad {\textbf{step 3:}}
If
  \begin{equation}\label{3.9}
H_{\delta_{k+1}}(X^{k+1}, X^{k},\mu_{k+1},\mu_k)-H_{\delta_{k}}(X^{k}, X^{k-1},\mu_{k},\mu_{k-1})\leq -\alpha {\mu_k}^2,
  \end{equation}

set $\mu_{k+1}=\mu_k$, otherwise, set
\begin{equation}\label{3.10}
\mu_{k+1}=\frac{\mu_0}{(k+1)^\sigma},
\end{equation}

set $ k:=k+1$, and return to Step 1.
\EndWhile
\end{algorithmic}
\end{algorithm}

\begin{remark}\label{remark_1}
 The parameter $h_k$ is bounded. Indeed, if $\frac{1-\alpha_k-(\alpha_{k}+\varepsilon)\frac{\mu_k}{\mu_{k+1}}}{(1-\beta_k)\tilde{L}}>\frac{1}{\tilde{L}}$ and $\frac{\alpha_k}{\beta_k\tilde{L}}>\frac{1}{\tilde{L}}$, then it has that  $\alpha_k+(\alpha_{k}+\varepsilon)\frac{\mu_k}{\mu_{k+1}} <\beta_k<\alpha_k$ which is impossible, then $\frac{1}{\tilde{L}}$ can be seen as a upper bound of $h_k^{-1}$.
\end{remark}
\begin{remark}\label{remark_2}
From Assumption \ref{assumption_4}, when  $\frac{1-\alpha_k-(\alpha_{k}+\varepsilon)\frac{\mu_k}{\mu_{k+1}}}{ (1-\beta_k)\tilde{L}}=\frac{\alpha_k}{\beta_k\tilde{L}}$, we have
the largest range of the stepsize. In this case, $\alpha_k=\frac{\beta_k(1-\varepsilon\frac{\mu_k}{\mu_{k+1}})}{1+\beta_k\frac{\mu_k}{\mu_{k+1}}}$. Moreover, from 
$\inf\frac{\mu_{k}}{\mu_{k+1}}=1$ and $\sup\frac{\mu_{k}}{\mu_{k+1}}=2^\sigma$,
we can take $\alpha_k =\frac{0.98\beta_k }{1+2^{\sigma}\beta_k}$ and
 $h_k=\frac{(1+2^{\sigma}\beta_k)\tilde{L}}{0.98}$. It needs to mention that the parameters in the GIMSPG algorithm can be selected adaptively. However, how to select appropriate parameters is beyond the scope of this paper.
\end{remark}

Next,  we are ready to dicuss the convergence  by constructing the auxiliary sequence. For any $k\in\mathbb{N}$, define
\begin{align}\label{3.111}
H_{\delta_{k+1}}(X^{k+1}, X^{k},\mu_{k+1},\mu_k):=\widetilde{\mathcal{F}}(X^{k+1},\mu_k)+\kappa\mu_k
+ \delta_{k+1}\mu^{-1}_{k+1}{\Vert X^{k+1}-X^k \Vert}^2,
\end{align}
where  $\{X^k\}$, $\{\mu_k \}$ are the sequences generated by GIMSPG algorithm and $\delta_{k+1}= \frac{h_{k+1}\alpha_{k+1}}{2}$.
 It needs to mention that the  sequence is  essential to the convergence analysis of the GIMSPG algorithm. Now, we start by showing that GIMSPG algorithm is well defined.

\begin{lemma}\label{lemma3.2}
The proposed GIMSPG algorithm is well defined, and the sequence $\left\{\mu_k\right\}$ generated by the GIMSPG has the property that there are infinite elements in $\mathcal{N}^s$ and $\lim_{k\to +\infty}\mu_k=0$, where
$\mathcal{N}^s:=\left\{k\in\mathbb{N}: \mu_{k+1}\ne\mu_{k}\right\}$.
 \end{lemma}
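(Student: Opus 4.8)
The plan is to prove the two assertions separately. First, to show the algorithm is well defined, I would argue that Step 2 always produces an iterate and that the update rule in Step 3 is unambiguous. The minimizer in \eqref{3.7a} exists and has the closed form described right after Assumption~\ref{assumption_4}: by Lemma~\ref{lemma 2.6} applied with $W=W^k$ and $\tau=\lambda h_k^{-1}\mu_k$, the problem $\min_X Q_{d^k}(X,Y^k,Z^k,\mu_k)$ reduces to the scalar proximal problem \eqref{3.5}, which is solved explicitly via \eqref{3.6}. So $X^{k+1}$ is well defined (up to the choice of singular vectors, which does not affect the objective value). The smoothing parameter update is deterministic once we agree that we test the inequality \eqref{3.9}: if it holds we keep $\mu_{k+1}=\mu_k$, otherwise we set $\mu_{k+1}=\mu_0/(k+1)^\sigma$. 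Note in particular that along $\mathcal N^s$ the value $\mu_{k+1}$ is forced to $\mu_0/(k+1)^\sigma$, and off $\mathcal N^s$ it is unchanged; hence $\{\mu_k\}$ is well defined and nonincreasing.

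Next, the core of the lemma: $\mathcal N^s$ is infinite and $\mu_k\to 0$. I would argue by contradiction. Suppose $\mathcal N^s$ is finite. Then there is an index $K$ such that $\mu_k=\mu_K=:\bar\mu_\ast>0$ for all $k\ge K$, and moreover \eqref{3.9} holds for every $k\ge K$ (that is exactly the branch that keeps $\mu$ fixed). Summing \eqref{3.9} telescopically from $K$ to $N$ gives
\begin{align*}
H_{\delta_{N+1}}(X^{N+1},X^N,\mu_{N+1},\mu_N)-H_{\delta_{K}}(X^K,X^{K-1},\mu_K,\mu_{K-1})\le -\alpha\sum_{k=K}^{N}\mu_k^2=-\alpha(N-K+1)\,\bar\mu_\ast^{\,2},
\end{align*}
so the left side tends to $-\infty$ as $N\to\infty$. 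But $H_{\delta_{N+1}}$ is bounded below: by definition \eqref{3.111} it equals $\widetilde{\mathcal F}(X^{N+1},\mu_N)+\kappa\mu_N+\delta_{N+1}\mu_{N+1}^{-1}\Vert X^{N+1}-X^N\Vert^2$, the last term is nonnegative, $\kappa\mu_N\ge 0$, and $\widetilde{\mathcal F}(X^{N+1},\mu_N)=\tilde f(X^{N+1},\mu_N)+\lambda\Phi(X^{N+1})\ge f(X^{N+1})-\kappa\mu_N+\lambda\Phi(X^{N+1})\ge\mathcal F(X^{N+1})-\kappa\mu_N$, which is bounded below because $f\ge 0$ and $\Phi\ge 0$ (indeed $\mathcal F$ is level bounded by Assumption~\ref{assumption_3}, and $\widetilde{\mathcal F}(X,\mu)\ge -\kappa\mu$ trivially since $\tilde f\ge f-\kappa\mu\ge -\kappa\mu$). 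This contradiction shows $\mathcal N^s$ is infinite. Once $\mathcal N^s$ is infinite, enumerate its elements $k_1<k_2<\cdots$; along this subsequence $\mu_{k_j+1}=\mu_0/(k_j+1)^\sigma\to 0$, and since $\{\mu_k\}$ is nonincreasing the whole sequence converges to the same limit, so $\lim_{k\to\infty}\mu_k=0$.

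The step I expect to be the main obstacle is establishing the lower bound on $H_{\delta_{k+1}}$ cleanly, i.e.\ making sure the bookkeeping with the $\kappa\mu_k$ correction and the $\delta_{k+1}\mu_{k+1}^{-1}$ coefficient genuinely yields a quantity bounded below uniformly in $k$ (using property~(iv) of the smoothing function in Definition~\ref{def3.1}, level boundedness of $\mathcal F$ from Assumption~\ref{assumption_3}, and nonnegativity of $f$ and $\Phi$). A secondary subtlety is justifying that the ``otherwise'' branch in Step 3 is always available and consistent — but that is immediate since \eqref{3.10} simply defines $\mu_{k+1}$ by formula, so no feasibility question arises. The telescoping argument itself is routine once the lower bound is in hand.
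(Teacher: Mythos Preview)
Your argument is correct and is the standard one; the paper itself does not give a proof but simply refers to Lemma~4 of \cite{Zhang_J}, so there is no substantive comparison to make. Two minor remarks: the lower bound on $H_{\delta_{k+1}}$ only needs $f\ge 0$ and $\Phi\ge 0$ (giving $H_{\delta_{k+1}}\ge \mathcal F(X^{k+1})\ge 0$ directly), so Assumption~\ref{assumption_3} is not actually required here; and the apparent circularity in testing \eqref{3.9} (since $\mu_{k+1}$ appears in $H_{\delta_{k+1}}$) is, as you implicitly assume, resolved by checking the inequality with the tentative value $\mu_{k+1}=\mu_k$.
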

\begin{proof}
 We can easily get the result  from \cite{Zhang_J} in Lemma 4.
\end{proof}

   Then we are intend to prove that the  auxiliary sequence $H_{\delta_{k+1}}(X^{k+1}, X^{k},\mu_{k+1},\mu_k)$ is monotonically decreasing.


\begin{lemma}\label{lemma3.3}
For any $k\in \mathbb{N}$, suppose $\{X^k \}$ be the sequence generated by GIMSPG algorithm, it holds that
\begin{align}\label{3.11}
& H_{\delta_{k+1}}(X^{k+1}, X^{k},\mu_{k+1},\mu_k)-H_{\delta_{k}}(X^{k}, X^{k-1},\mu_{k},\mu_{k-1}) \notag\\
\leq &((\frac{\tilde{L}-h_k}{2}+ \frac{h_k\alpha_{k}-\beta_k\tilde{L}}{2})\mu^{-1}_k
+\frac{\alpha_{k}h_{k}\mu^{-1}_{k+1}}{2})\Vert X^{k+1}-X^{k}\Vert^2 \notag\\
&+
 \frac{\tilde{L}\beta_k(\beta_k-1) }{2}\mu^{-1}_k{\Vert X^{k}-X^{k-1}\Vert}^2.
\end{align}
 Moreover, when the parameters  $\alpha_k, \beta_k,h_k$ satisfy Assumption \ref{assumption_4},
  it holds that
 \begin{align}\label{3.1111}
 	H_{\delta_{k+1}}(X^{k+1}, X^{k},\mu_{k+1},\mu_k)-H_{\delta_{k}}(X^{k}, X^{k-1},\mu_{k},\mu_{k-1})
 	\leq \frac{-\varepsilon h_k }{2}\mu^{-1}_{k+1}{\Vert X^{k+1}-X^{k}\Vert}^2,
 \end{align}
and the sequence
$ H_{\delta_{k+1}}(X^{k+1}, X^{k},\mu_{k+1},\mu_k)$
is  nonincreasing.
\end{lemma}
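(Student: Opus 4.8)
The plan is to establish the descent estimate \eqref{3.11} by a standard three-term telescoping argument tailored to the two-extrapolation scheme, and then to impose Assumption \ref{assumption_4} to collapse the right-hand side into the clean form \eqref{3.1111}. First I would write down the descent lemma for the smoothing function: since $\nabla_X\tilde f(\cdot,\mu_k)$ is Lipschitz with constant $\tilde L\mu_k^{-1}$ (Definition \ref{def3.1}(v)), for any $X$ one has
\begin{align*}
\tilde f(X,\mu_k)\le \tilde f(Z^k,\mu_k)+\langle X-Z^k,\nabla\tilde f(Z^k,\mu_k)\rangle+\frac{\tilde L}{2}\mu_k^{-1}\Vert X-Z^k\Vert^2.
\end{align*}
Applying this at $X=X^{k+1}$, then using the optimality of $X^{k+1}$ for $Q_{d^k}(\cdot,Y^k,Z^k,\mu_k)$ (which gives $Q_{d^k}(X^{k+1},\cdot)\le Q_{d^k}(X^k,\cdot)$, hence an inequality comparing $\lambda\Phi^{d^k}(X^{k+1})+\tfrac12 h_k\mu_k^{-1}\Vert X^{k+1}-Y^k\Vert^2$ against the value at $X^k$), and adding back $\langle X^k-Z^k,\nabla\tilde f(Z^k,\mu_k)\rangle$, I would produce a bound on $\tilde f(X^{k+1},\mu_k)+\lambda\Phi^{d^k}(X^{k+1})$ in terms of $\tilde f(X^k,\mu_k)+\lambda\Phi^{d^k}(X^k)$ plus quadratic remainders in $\Vert X^{k+1}-Y^k\Vert^2$, $\Vert X^k-Y^k\Vert^2$, $\Vert X^{k+1}-Z^k\Vert^2$, $\Vert X^k-Z^k\Vert^2$. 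Substituting $Y^k-X^k=\alpha_k(X^k-X^{k-1})$ and $Z^k-X^k=\beta_k(X^k-X^{k-1})$ and expanding, each of these becomes a combination of $\Vert X^{k+1}-X^k\Vert^2$, $\Vert X^k-X^{k-1}\Vert^2$ and the cross term $\langle X^{k+1}-X^k,X^k-X^{k-1}\rangle$; the cross term I would control by Young's inequality, choosing the splitting so that the $\Vert X^k-X^{k-1}\Vert^2$ coefficient matches what is needed to telescope against $\delta_k\mu_k^{-1}\Vert X^k-X^{k-1}\Vert^2$.

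Next I would pass from $\Phi^{d^k}$ back to $\Phi$: because $d^k=d^{X^k}$ we have $\lambda\Phi^{d^k}(X^k)=\lambda\Phi(X^k)$ by \eqref{3.2}, while $\lambda\Phi^{d^k}(X^{k+1})\ge\lambda\Phi(X^{k+1})$ by \eqref{3.3}; this is exactly the inequality direction needed so that $\tilde f(X^{k+1},\mu_k)+\lambda\Phi(X^{k+1})\le \widetilde{\mathcal F}(X^{k+1},\mu_k)$ can replace the $\Phi^{d^k}$ term on the left and $\widetilde{\mathcal F}(X^k,\mu_k)$ appears on the right. Then I would add $\kappa\mu_k$ to both sides and use Definition \ref{def3.1}(iv) in the form $\widetilde{\mathcal F}(X^k,\mu_k)+\kappa\mu_k \le \widetilde{\mathcal F}(X^k,\mu_{k-1})+\kappa\mu_{k-1}$ when $\mu_k\le\mu_{k-1}$ (and equality when $\mu_k=\mu_{k-1}$), so that the pair $\widetilde{\mathcal F}(\cdot,\mu_{k-1})+\kappa\mu_{k-1}$ reconstitutes the $H_{\delta_k}$ term. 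Collecting the $\delta_{k+1}=\tfrac12 h_{k+1}\alpha_{k+1}$ coefficient on $\Vert X^{k+1}-X^k\Vert^2$ on the left and the $\delta_k$ coefficient on $\Vert X^k-X^{k-1}\Vert^2$ on the right, and bookkeeping the leftover quadratic coefficients, yields precisely \eqref{3.11}; here I would use monotonicity of $\{h_k\}$ and the relation $\delta_{k+1}=\tfrac12 h_{k+1}\alpha_{k+1}\le\tfrac12 h_k\alpha_k$ to absorb one term cleanly.

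Finally, for \eqref{3.1111} I would check that under Assumption \ref{assumption_4} the bracketed coefficient of $\Vert X^{k+1}-X^k\Vert^2$ in \eqref{3.11} is $\le -\tfrac{\varepsilon h_k}{2}\mu_{k+1}^{-1}$ and the coefficient of $\Vert X^k-X^{k-1}\Vert^2$ is $\le 0$. The latter is immediate since $\beta_k\in[0,1]$ gives $\beta_k(\beta_k-1)\le0$. For the former, the bound $h_k^{-1}\le \alpha_k/(\beta_k\tilde L)$ handles the term $\tfrac12(h_k\alpha_k-\beta_k\tilde L)\mu_k^{-1}\le 0$, and the bound $h_k^{-1}\le \big(1-\alpha_k-(\alpha_k+\varepsilon)\tfrac{\mu_k}{\mu_{k+1}}\big)/((1-\beta_k)\tilde L)$ rearranges to $\tilde L(1-\beta_k)\mu_k^{-1}\le h_k\big(1-\alpha_k-(\alpha_k+\varepsilon)\tfrac{\mu_k}{\mu_{k+1}}\big)\mu_k^{-1} = h_k(1-\alpha_k)\mu_k^{-1}-h_k(\alpha_k+\varepsilon)\mu_{k+1}^{-1}$, which after combining $\tfrac12(\tilde L-h_k)\mu_k^{-1}$ with $\tfrac12\alpha_k h_k\mu_{k+1}^{-1}$ and using $\mu_{k+1}\le\mu_k$ leaves a residual of at most $-\tfrac{\varepsilon h_k}{2}\mu_{k+1}^{-1}$. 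Monotonicity of $H_{\delta_{k+1}}$ then follows since the right-hand side of \eqref{3.1111} is nonpositive. The main obstacle I anticipate is the careful bookkeeping in the first paragraph: choosing the Young's-inequality split and tracking the $\mu_k$ versus $\mu_{k+1}$ scalings so that the three-term structure telescopes and the coefficients line up with $\delta_k$, $\delta_{k+1}$ exactly — the algebra is routine but unforgiving, and the use of Definition \ref{def3.1}(iv) to bridge $\mu_k$ and $\mu_{k-1}$ must be invoked with the correct inequality direction.
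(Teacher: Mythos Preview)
Your overall strategy matches the paper's proof closely: descent lemma from the Lipschitz gradient, optimality of the subproblem, the passage $\Phi^{d^k}\to\Phi$ via \eqref{3.2}--\eqref{3.3}, Definition~\ref{def3.1}(iv) to shift $\mu_k\to\mu_{k-1}$, a Cauchy--Schwarz/Young bound on the cross term, and finally Assumption~\ref{assumption_4} to collapse the coefficients. Two points, however, need correction.

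First, you omit convexity. After combining the descent lemma at $X^{k+1}$ with the optimality inequality, the gradient terms telescope to leave $\tilde f(Z^k,\mu_k)+\langle X^k-Z^k,\nabla\tilde f(Z^k,\mu_k)\rangle$ on the right. The paper bounds this above by $\tilde f(X^k,\mu_k)$ using Definition~\ref{def3.1}(ii); without convexity one cannot do this, and your listing of $\Vert X^k-Z^k\Vert^2$ as a surviving quadratic remainder would add an extra $\tfrac{\tilde L}{2}\beta_k^2\mu_k^{-1}$ (or larger) to the $\Vert\Delta_k\Vert^2$ coefficient and yield a strictly weaker inequality than \eqref{3.11} --- in particular the $\Vert\Delta_k\Vert^2$ coefficient would no longer be $\le 0$ for all $\beta_k\in[0,1]$. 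So convexity is essential here, not incidental.

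Second, in deducing \eqref{3.1111} you have a sign flipped. The constraint $h_k^{-1}\le\alpha_k/(\beta_k\tilde L)$ gives $h_k\alpha_k-\beta_k\tilde L\ge 0$, not $\le 0$; its role in the paper is \emph{earlier}, to guarantee that the cross term $(h_k\alpha_k-\beta_k\tilde L)\mu_k^{-1}\langle\Delta_k,\Delta_{k+1}\rangle$ has a nonnegative coefficient so that Cauchy--Schwarz goes in the right direction when deriving \eqref{3.11}. Your attempt to split the $\Vert\Delta_{k+1}\Vert^2$ coefficient into two pieces handled by the two constraints separately therefore fails. The correct route, as in the paper, is to bound the full combined coefficient $(\tilde L-h_k+h_k\alpha_k-\beta_k\tilde L)\mu_k^{-1}+\alpha_kh_k\mu_{k+1}^{-1}$ in one stroke using only the first constraint in Assumption~\ref{assumption_4}; in fact your own rearrangement of that constraint already delivers exactly this once you stop discarding the $(h_k\alpha_k-\beta_k\tilde L)$ piece.
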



\begin{proof}
 From the optimality of  subproblem (\ref{3.7a}), we have
 \begin{align}\label{3.13}
 &\langle X^{k+1}-Y^k, \nabla \tilde{f}(Z^k,\mu_k)\rangle+\frac{1}{2}h_k{\mu_k}^{-1}{\Vert X^{k+1}-Y^k \Vert}^2+\lambda{{\Phi}^{d^k}}(X^{k+1})     \notag\\
 \leq & \langle X^k-Y^k, \nabla \tilde{f}(Z^k,\mu_k)\rangle+\frac{1}{2}h_k{\mu_k}^{-1}{\Vert X^k-Y^k \Vert}^2+\lambda{{\Phi}^{d^k}}(X^k).
 \end{align}
Since $\nabla\tilde{f}(\cdot,\mu_k)$ is Lipschitz continuous with modulus $\tilde{L}{\mu^{-1}_k}$,  it follows from the Definition \ref{def3.1}(v) that
\begin{align}\label{3.14}
\tilde{f}(X^{k+1},\mu_k)\leq \tilde{f}(Z^{k},\mu_k)+\langle X^{k+1}-Z^k, \nabla \tilde{f}(Z^k,\mu_k)\rangle + \frac{1}{2}\tilde{L}{\mu^{-1}_k}{\Vert X^{k+1}-Z^k \Vert}^2.
\end{align}
Moreover, by the convexity of $\tilde{f}(\cdot,\mu_k)$, it holds that
\begin{align}\label{3.14a}
 \tilde{f}(Z^{k},\mu_k) + \langle X^{k}-Z^k, \nabla \tilde{f}(Z^k,\mu_k) \rangle \leq \tilde{f}(X^{k},\mu_k).
\end{align}
Combining (\ref{3.13}), (\ref{3.14}) and (\ref{3.14a}), we have
\begin{align}\label{3.14aaa}
& \tilde{{\mathcal F}}^{d^k}(X^{k+1},\mu_k)-\widetilde  {\mathcal F}^{d^k}(X^{k},\mu_k)  \\ \notag
\leq&   h_k{\mu^{-1}_k}\langle Y^{k}-X^k, X^{k+1}-X^{k} \rangle
-\frac{1}{2}h_k{\mu^{-1}_k} {\Vert {X}^{k+1}-X^k \Vert}^2+\frac{\tilde{L}\mu^{-1}_k}{2} {\Vert {X}^{k+1}-Z^k \Vert}^2.\notag
\end{align}
Denote $\Delta_k:= X^k-X^{k-1}$, we have
 $\alpha_k\Delta_k= Y^k-X^k$, $\beta_k\Delta_k=Z^k-X^k$, $\beta_k\Delta_k-\Delta_{k+1}=Z^k-X^{k+1}$.
Then it follows from (\ref{3.14aaa}) that,
\begin{align}
&\widetilde{{\mathcal {F}}}^{d^k}(X^{k+1},\mu_k)-\widetilde{{\mathcal{F}}}^{d^k}(X^{k},\mu_k) \notag \\
 \leq& h_k{\mu_k}^{-1} \langle \alpha_k\Delta_k, \Delta_{k+1} \rangle +\frac{\tilde{L} \mu^{-1}_k}{2} {\Vert\beta_k\Delta_k- \Delta_{k+1}\Vert}^2 - \frac{1}{2} h_k{\mu_k}^{-1}{\Vert \Delta_{k+1}\Vert}^2\notag \\
 =& \frac{(\tilde{L}-h_k)\mu^{-1}_k}{2}{\Vert \Delta_{k+1}\Vert}^2+\frac{\tilde{L}}{2}\beta^2_k\mu^{-1}_k{\Vert \Delta_{k}\Vert}^2 +
 (h_k\alpha_k-\beta_k\tilde{L})\mu^{-1}_k \langle \Delta_k, \Delta_{k+1} \rangle  \notag \\
 \leq& (\frac{ \tilde{L}-h_k }{2}+\frac{h_k\alpha_k-\beta_k\tilde{L}}{2})\mu^{-1}_k
 {\Vert \Delta_{k+1}\Vert}^2+(\frac{\tilde{L}\beta^2_k}{2}+\frac{h_k\alpha_k-\beta_k\tilde{L}}{2})\mu^{-1}_k
 {\Vert \Delta_{k}\Vert}^2,
\end{align}
where the second inequality comes from the Assumption \ref{assumption_4} and Cauchy-Schwartz inequality.

Letting $d^k=d^{X^k}$ and according to $ \mathcal{\widetilde{F}}^{d^k}(X^{k+1},\mu_k)\ge \mathcal {\widetilde{ F}}(X^{k+1},\mu_k)$, we have
\begin{align}\label{3.15}
&\widetilde{\mathcal F}(X^{k+1},\mu_k)
+\frac{\alpha_{k+1}h_{k+1}\mu^{-1}_{k+1}}{2}{\Vert \Delta_{k+1}\Vert}^2
-(\widetilde{\mathcal F}(X^{k},\mu_k)+\frac{\alpha_{k }h_{k }\mu^{-1}_{k}}{2}{\Vert \Delta_{k}\Vert}^2 ) \notag \\
&\leq  ((\frac{\tilde{L}-h_k}{2}+\frac{h_k\alpha_k-\beta_k\tilde{L}}{2})\mu^{-1}_k
   +\frac{\alpha_{k+1}h_{k+1}\mu^{-1}_{k+1}}{2})
     {\Vert \Delta_{k+1}\Vert}^2  \notag\\
&\quad+((\frac{\tilde{L}\beta^2_k+h_k\alpha_k-\beta_k\tilde{L}}{2})\mu^{-1}_k-\frac{\alpha_kh_{k}\mu^{-1}_{k}}{2})
 {\Vert \Delta_{k}\Vert}^2.
\end{align}
By the Definition \ref{def3.1}(iv), we easily have that
 \begin{align}\label{3.15a}
 	\widetilde {\mathcal{F}}(X^{k},\mu_k)\leq \widetilde {\mathcal{F}}(X^{k},\mu_{k-1})+\kappa(\mu_{k-1}-\mu_k).
 \end{align}
Combining (\ref{3.15}), (\ref{3.15a}) with the nonincreasing of $h_k$, $\alpha_k$, we get
\begin{align*}
 & H_{\delta_{k+1}}(X^{k+1}, X^{k},\mu_{k+1},\mu_k)-H_{\delta_{k}}(X^{k}, X^{k-1},\mu_{k},\mu_{k-1}) \notag \\
\leq& ((\frac{\tilde{L}-h_k}{2}+\frac{h_k\alpha_k-\beta_k\tilde{L}}{2})\mu^{-1}_k
+\frac{\alpha_{k}h_{k}\mu^{-1}_{k+1}}{2})
{\Vert \Delta_{k+1}\Vert}^2+
  \frac{\tilde{L}(\beta^2_k-\beta_k)}{2}\mu^{-1}_{k}{\Vert \Delta_{k}\Vert}^2.
  \end{align*}
According to the parameters constraint in Assumption \ref{assumption_4}, we easily have that  $(\tilde{L}-h_k+h_k\alpha_k-\beta_k\tilde{L})\mu^{-1}_k+\alpha_{k}h_{k}\mu^{-1}_{k+1
}<-h_k\mu^{-1}_{k+1}\varepsilon$, $\beta^2_k-\beta_k\leq 0$.
 Hence, the inequality (\ref{3.1111}) holds.
So the desired results are obtained.
\end{proof}

  \begin{corollary}\label{corollary_1}
For any $k\in \mathbb{N}$, under the Assumption \ref{assumption_4}, there exists $\zeta\in\mathbb{R}$ satisfying
$$\zeta:= \mathop{\lim_{k\to+\infty}}H_{\delta_{k+1}}(X^{k+1}, X^{k},\mu_{k+1},\mu_k). $$
 Further, it holds that
\begin{align}\label{3.16}
\mathop{\lim_{k\to+\infty}}H_{\delta_{k+1}}(X^{k+1}, X^{k},\mu_{k+1},\mu_k)
&= \mathop{\lim_{k\to+\infty}}\widetilde{\mathcal F}(X^{k},\mu_{k-1})=\mathop{\lim_{k\to+\infty}}\mathcal{F}(X^{k})=\zeta.
 \end{align}
and the sequence $\left\{ X^k\right\}$ is bounded.
\end{corollary}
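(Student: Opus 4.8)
The plan is to combine the monotonicity of the auxiliary sequence established in Lemma \ref{lemma3.3} with a uniform lower bound for it, and then to show that the two remainder terms in the definition (\ref{3.111}) of $H_{\delta_{k+1}}$ tend to $0$, so that $H_{\delta_{k+1}}$, $\widetilde{\mathcal F}(X^k,\mu_{k-1})$ and $\mathcal F(X^k)$ all share the same limit $\zeta$.

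First I would bound $H_{\delta_{k+1}}$ from below. Using Definition \ref{def3.1}(iv) in the form $\tilde f(X^{k+1},\mu_k)\ge f(X^{k+1})-\kappa\mu_k$ together with (\ref{3.111}) gives
\[
H_{\delta_{k+1}}(X^{k+1},X^{k},\mu_{k+1},\mu_k)\ \ge\ f(X^{k+1})+\lambda\Phi(X^{k+1})+\delta_{k+1}\mu_{k+1}^{-1}\Vert X^{k+1}-X^{k}\Vert^2\ \ge\ \mathcal F(X^{k+1})\ \ge\ 0,
\]
since $f\ge0$, $\Phi\ge0$, $\lambda>0$ and $\delta_{k+1}\ge0$. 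Because Lemma \ref{lemma3.3} shows that $\{H_{\delta_{k+1}}(X^{k+1},X^{k},\mu_{k+1},\mu_k)\}$ is nonincreasing, it converges to some $\zeta\ge0$, which is the first assertion.

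Next I would exploit the descent estimate (\ref{3.1111}). Telescoping it over $k$ (recall $X^{-1}=X^0$, $\mu_{-1}=\mu_0$) yields
\[
\sum_{k\ge0}\frac{\varepsilon h_k}{2}\,\mu_{k+1}^{-1}\Vert X^{k+1}-X^{k}\Vert^2\ \le\ H_{\delta_{0}}(X^{0},X^{-1},\mu_0,\mu_{-1})-\zeta\ <\ \infty .
\]
By Assumption \ref{assumption_4} and Remark \ref{remark_1}, $\{h_k\}$ is bounded and bounded away from $0$ (one has $h_k\ge\tilde L$), so the terms of the series force $\mu_{k+1}^{-1}\Vert X^{k+1}-X^{k}\Vert^2\to0$; moreover $\delta_{k+1}=\tfrac12 h_{k+1}\alpha_{k+1}$ is bounded, whence $\delta_{k+1}\mu_{k+1}^{-1}\Vert X^{k+1}-X^{k}\Vert^2\to0$. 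Since $\mu_k\to0$ by Lemma \ref{lemma3.2}, rewriting (\ref{3.111}) as
\[
\widetilde{\mathcal F}(X^{k+1},\mu_k)=H_{\delta_{k+1}}(X^{k+1},X^{k},\mu_{k+1},\mu_k)-\kappa\mu_k-\delta_{k+1}\mu_{k+1}^{-1}\Vert X^{k+1}-X^{k}\Vert^2
\]
and passing to the limit shows $\widetilde{\mathcal F}(X^k,\mu_{k-1})\to\zeta$. Finally, Definition \ref{def3.1}(iv) gives $\vert\widetilde{\mathcal F}(X^k,\mu_{k-1})-\mathcal F(X^k)\vert=\vert\tilde f(X^k,\mu_{k-1})-f(X^k)\vert\le\kappa\mu_{k-1}\to0$, so $\mathcal F(X^k)\to\zeta$ as well, establishing (\ref{3.16}).

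For boundedness of $\{X^k\}$, the chain of inequalities in the first step already gives $\mathcal F(X^{k+1})\le H_{\delta_{k+1}}(X^{k+1},X^{k},\mu_{k+1},\mu_k)\le H_{\delta_{0}}(X^{0},X^{-1},\mu_0,\mu_{-1})$ for every $k$, so $\{\mathcal F(X^k)\}$ is bounded above, and Assumption \ref{assumption_3} (level boundedness of $\mathcal F$) confines $\{X^k\}$ to a bounded set. I do not expect a genuine obstacle here; the only points requiring care are the index shifts $k\leftrightarrow k+1$ and verifying that $h_k$ is bounded away from $0$, so that summability of $\tfrac{\varepsilon h_k}{2}\mu_{k+1}^{-1}\Vert X^{k+1}-X^{k}\Vert^2$ actually delivers $\mu_{k+1}^{-1}\Vert X^{k+1}-X^{k}\Vert^2\to0$ — both follow at once from Assumption \ref{assumption_4} and Remark \ref{remark_1}.
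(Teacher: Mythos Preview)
Your proof is correct and follows essentially the same route as the paper: bound $H_{\delta_{k+1}}$ below via Definition~\ref{def3.1}(iv), invoke the monotonicity of Lemma~\ref{lemma3.3} to obtain $\zeta$, kill the two remainder terms $\kappa\mu_k$ and $\delta_{k+1}\mu_{k+1}^{-1}\Vert X^{k+1}-X^{k}\Vert^2$, and deduce boundedness from Assumption~\ref{assumption_3}. The only minor variation is that you telescope (\ref{3.1111}) to get summability (which is precisely the argument the paper postpones to Corollary~\ref{corollary_2}), whereas the paper's own proof of Corollary~\ref{corollary_1} controls the remainder by the single-step bound $\tfrac{\alpha_{k+1}h_{k+1}}{2}\mu_{k+1}^{-1}\Vert\Delta_{k+1}\Vert^2\le \tfrac{\alpha_{k+1}h_{k+1}}{\varepsilon h_k}\bigl(H_{\delta_k}-\zeta\bigr)$ coming directly from (\ref{3.1111}) and (\ref{3.16a}).
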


\begin{proof}
 By the definition of $H_{\delta_{k+1}}$ from (\ref{3.111}), we obtain that
   \begin{align*}
   H_{\delta_{k+1}}(X^{k+1}, X^{k},\mu_{k+1},\mu_k) \ge\widetilde{\mathcal{F}}(X^{k+1},\mu_k)+\kappa\mu_k \ge \mathcal{F}(X^{k+1})
   &\ge\mathop{\min_{X\in\mathbb{
   		 R}^{m\times n}}}\mathcal{F}(X)\\
   	 &=\mathop{\min_{X\in\mathbb{R}^{m\times n} }}\mathcal{F}_{\ell_0}(X),
   	 \end{align*}
where the last equality holds due to the result that the global minimizers of problem (\ref{1.2}) and problem (\ref{1.1}) is equivalent from Lemma \ref{lemma2.5},
  further, according to the fact that $\left\{ H_{\delta_{k+1}}(X^{k+1}, X^{k},\mu_{k+1},\mu_k) \right\}$ is nonincreasing, then there exists $\zeta\in\mathbb{R}$ such that
 \begin{align} \label{3.16a}
 	\mathop{\lim_{k\to+\infty}} H_{\delta_{k+1}}(X^{k+1}, X^{k},\mu_{k+1},\mu_k)=\zeta,
   \quad
H_{\delta_{k+1}}(X^{k+1}, X^{k},\mu_{k+1},\mu_k)\ge \zeta,  \forall k\in\mathbb{N}.
 \end{align}
 Next, note that
 \begin{align}\label{3.19a}
&\vert \widetilde {\mathcal F}(X^{k+1},\mu_{k}) +\kappa\mu_{k}-\zeta\vert \notag\\
=&\vert H_{\delta_{k+1}}(X^{k+1}, X^{k},\mu_{k+1},\mu_{k})-\zeta
 -\frac{\alpha_{k+1}h_{k+1}\mu^{-1}_{k+1}}{2} {\Vert \Delta_{k+1}\Vert}^2\vert  \notag\\
\leq  &\vert H_{\delta_{k+1}}(X^{k+1}, X^{k},\mu_{k+1},\mu_{k})-\zeta\vert
+\frac{\alpha_{k+1}h_{k+1}  }{ \varepsilon h_k }\cdot\frac{\varepsilon h_k \mu^{-1}_{k+1} }{2}{\Vert \Delta_{k+1}\Vert}^2  \notag\\
\leq  & \vert  H_{\delta_{k+1}}(X^{k+1}, X^{k},\mu_{k+1},\mu_{k})-\zeta\vert
+ \frac{\alpha_{k+1}h_{k+1}}{ \varepsilon h_k} ( H_{\delta_{k }}(X^{k }, X^{k-1},\mu_{k},\mu_{k-1})- \zeta)
\end{align}
where the first inequality holds  due to the traingle inequality and the second one derives from (\ref{3.1111}) and (\ref{3.16a}).
When $k$ is sufficiently large, the right of the above inequality tends to 0 by means of (\ref{3.16a}) and the boundedness  of $\frac{\alpha_{k+1}h_{k+1}}{ \varepsilon h_k}$. Therefore, it holds that
$$
\mathop{\lim_{k\to+\infty}}H_{\delta_{k+1}}(X^{k+1}, X^{k},\mu_{k+1},\mu_k)
 = \mathop{\lim_{k\to+\infty}}\widetilde {\mathcal{F}}(X^{k+1},\mu_{k})=\mathop{\lim_{k\to+\infty}} \mathcal F(X^{k})=\zeta.
$$

From the nonincreasing property of the sequence
 $\left\{H_{\delta_{k+1}}(X^{k+1}, X^{k},\mu_{k+1},\mu_k)  \right\}$ again,  it holds that
\begin{align*}
   \mathcal{F}(X^{k+1}) \leq \widetilde{\mathcal{F}}(X^{k+1}, \mu_k)+\kappa \mu_k
    & \leq {H_{\delta_{k+1}}(X^{k+1}, X^{k},\mu_{k+1},\mu_k)} \\
   &\leq
    H_{\delta_{0}}(X^{0}, X^{-1},\mu_{0},\mu_{-1})
    <+\infty.
    \end{align*}
    Then we get that $\left\{ X^k\right\}$ is bounded for any $k\in \mathbb{N}$ from the level bounded of ${\mathcal{F}}$ by the Assumption \ref{assumption_3}.
\end{proof}
\begin{corollary}\label{corollary_2} Under the parameters constraint in Assumption \ref{assumption_4},  it holds that
 \begin{align*}
 \sum_{k=0}^{+\infty}\mu^{-1}_{k} {\Vert X^{k}-X^{k-1}\Vert}^2<+\infty, \quad \sum_{k=0}^{+\infty}\mu^{-1}_{k-1} {\Vert X^{k}-X^{k-1}\Vert}^2
 <+\infty.
 \end{align*}
Moreover, $\lim_{k\to +\infty} \Vert X^{k}-X^{k-1}\Vert=0 $.
 \end{corollary}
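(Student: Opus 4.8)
The plan is to telescope the per-iteration decrease established in Lemma \ref{lemma3.3}. From inequality (\ref{3.1111}) we have, for every $k\in\mathbb{N}$,
\[
\frac{\varepsilon h_k}{2}\mu^{-1}_{k+1}{\Vert X^{k+1}-X^k\Vert}^2\leq H_{\delta_{k}}(X^{k}, X^{k-1},\mu_{k},\mu_{k-1})-H_{\delta_{k+1}}(X^{k+1}, X^{k},\mu_{k+1},\mu_k).
\]
Summing this from $k=0$ to $k=N$ gives a telescoping right-hand side bounded above by $H_{\delta_{0}}(X^{0}, X^{-1},\mu_{0},\mu_{-1})-H_{\delta_{N+1}}(X^{N+1}, X^{N},\mu_{N+1},\mu_N)$, which by Corollary \ref{corollary_1} (in particular $H_{\delta_{k+1}}\ge\zeta$ for all $k$, together with $\lim_k H_{\delta_{k+1}}=\zeta$) is bounded above by $H_{\delta_{0}}(X^{0}, X^{-1},\mu_{0},\mu_{-1})-\zeta<+\infty$, uniformly in $N$. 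Since $h_k^{-1}\ge h_0^{-1}$ (equivalently $h_k\le h_0$) by Assumption \ref{assumption_4} and $h_k>0$, the coefficient $\tfrac{\varepsilon h_k}{2}$ is bounded below by a positive constant only after we note $h_k\ge$ some positive bound — actually what we need is $h_k$ bounded \emph{below} by a positive constant; this follows since $h_k^{-1}\le\tfrac{1}{\tilde L}$ by Remark \ref{remark_1}, so $h_k\ge\tilde L>0$. Hence $\sum_{k=0}^{+\infty}\mu^{-1}_{k+1}{\Vert X^{k+1}-X^k\Vert}^2<+\infty$, which after reindexing is the first claimed series (the index $k$ in $\mu^{-1}_k\Vert X^k-X^{k-1}\Vert^2$ matches $\mu^{-1}_{k+1}$ in $\mu^{-1}_{k+1}\Vert X^{k+1}-X^k\Vert^2$).

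For the second series $\sum_{k}\mu^{-1}_{k-1}{\Vert X^{k}-X^{k-1}\Vert}^2$, I would use the relation between consecutive smoothing parameters. By construction (\ref{3.10}), either $\mu_{k}=\mu_{k-1}$, in which case $\mu^{-1}_{k-1}=\mu^{-1}_k$, or $\mu_k=\mu_0/k^\sigma$ so that $\mu_{k-1}/\mu_k\le (k/(k-1))^\sigma$; in either case $\mu^{-1}_{k-1}\le 2^\sigma\mu^{-1}_k$ for all $k\ge 1$ (using $\sup\frac{\mu_{k-1}}{\mu_k}=2^\sigma$ as already invoked in Remark \ref{remark_2}). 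Therefore $\sum_k\mu^{-1}_{k-1}\Vert X^k-X^{k-1}\Vert^2\le 2^\sigma\sum_k\mu^{-1}_k\Vert X^k-X^{k-1}\Vert^2<+\infty$. Finally, $\lim_{k\to+\infty}\Vert X^k-X^{k-1}\Vert=0$ follows from convergence of the first series together with $\lim_k\mu_k=0$ (Lemma \ref{lemma3.2}): since the general term $\mu^{-1}_k\Vert X^k-X^{k-1}\Vert^2\to 0$ and $\mu^{-1}_k\to+\infty$, we must have $\Vert X^k-X^{k-1}\Vert^2=\mu_k\cdot(\mu^{-1}_k\Vert X^k-X^{k-1}\Vert^2)\to 0$.

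The only subtle point — and the one I would be most careful about — is justifying that the telescoped sum is bounded \emph{uniformly in $N$}; this rests entirely on the lower bound $H_{\delta_{N+1}}\ge\zeta$ from Corollary \ref{corollary_1}, which in turn came from the global lower bound $\mathcal F(X^{N+1})\ge\min_X\mathcal F(X)>-\infty$. Everything else is a routine telescoping-plus-reindexing argument together with the elementary comparison $\mu^{-1}_{k-1}\le 2^\sigma\mu^{-1}_k$; I do not anticipate any real obstacle beyond keeping the index shifts straight.
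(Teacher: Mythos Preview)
Your proposal is correct and follows essentially the same telescoping argument as the paper. One minor simplification: for the second series you invoke $\mu^{-1}_{k-1}\le 2^\sigma\mu^{-1}_k$, but since $\{\mu_k\}$ is nonincreasing you have directly $\mu^{-1}_{k-1}\le\mu^{-1}_k$, which is exactly what the paper uses; your bound is correct but looser than necessary.
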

 \begin{proof}
  Summing up the inequality (\ref{3.1111}) from $k=0$ to $k=K$ for any $K\in \mathbb{N}$, it yields that
\begin{align*}
 &\sum_{k=0}^{K}\frac{\varepsilon h_k}{2}\mu^{-1}_{k+1 }{\Vert X^{k+1}-X^{k}\Vert}^2\\
\leq&  H_{\delta_0}(X^{0},X^{-1},\mu_{0},\mu_{-1})
-H_{\delta_{K+1}}(X^{K+1},X^K,\mu_{K+1},\mu_K),
\end{align*}
when $K$ is sufficiently large, from (\ref{3.16a}) and the boundedness of $h_k$, we have
\begin{align*}
\sum_{k=0}^{+\infty}\mu^{-1}_{k+1}{\Vert X^{k+1}-X^{k}\Vert}^2  <+\infty.
\end{align*}
Since $\{\mu_k\}$ is a nonincreasing sequence,   it can be easily got that
\begin{align*}
 \sum_{k=0}^{+\infty}\mu^{-1}_{k}{\Vert X^{k+1 }-X^{k}\Vert}^2\leq \sum_{k=0}^{+\infty}\mu^{-1}_{k+1 }{\Vert X^{k+1 }-X^{k}\Vert}^2 <+\infty.
\end{align*}
Further, from $\mu_k\leq\bar{\mu}< 1$ for any $k\in\mathbb{N}$, we have
$$ \lim_{k\to +\infty} \Vert X^{k+1}-X^{k}\Vert=0.$$
So the proof is completed.
\end{proof}

\begin{theorem}\label{theorem3}
	Suppose $\{X^k\}$ be the sequence generated by the GIMSPG algorithm. Then, we have that
	\begin{enumerate}[\rm(i)]
		\item for any  $k\in\mathcal{N}^s$, $d^k$ in Algorithm \ref{alg1} only changes finite number of times;
	   \item  for any accumulation point $\bar{X}$  and $X^*$ of $\{X^k, k\in\mathcal{N}^s\}$, it follows that $\mathcal{A}^c( \sigma(\bar{X}) )= \mathcal{A}^c(\sigma(X^*))$,
		moreover, there exists  $J\in\mathbb{N}$, for any $i\ge J$, it holds that
			$$ \Vert  \sigma(X^{k_i})_{\mathcal A^{c}(\sigma(\bar{X}))}-  \sigma(\bar{X})_{\mathcal A^{c}(\sigma(\bar{X}))}\Vert=0,
		$$
				where $\{X^{k_i}\} $ is a subsequence of $\left\{X^k: k\in\mathcal{N}^s  \right\}$ such that  $\lim_{i\to +\infty} X^{k_{i}}=\bar{X}$.		
	\end{enumerate}
\end{theorem}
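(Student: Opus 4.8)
The plan is to run everything through the closed-form solution of the subproblem. Since $d^k=d^{X^k}$ and $\sigma_1(X^k)\ge\cdots\ge\sigma_n(X^k)$, the set $\{i:d^k_i=2\}=\{i:\sigma_i(X^k)\ge v\}$ is an initial block $\{1,\dots,s_k\}$, so by \eqref{3.5}--\eqref{3.6} the vector $\hat x^k$ obtained from the ordered singular values of $W^k$ is again nonincreasing; hence $\sigma(X^{k+1})=\hat x^k$, with $\sigma_i(X^{k+1})=\sigma_i(W^k)$ for $i\le s_k$ and $\sigma_i(X^{k+1})=\max\{\sigma_i(W^k)-\lambda h_k^{-1}\mu_k/v,\,0\}$ for $i>s_k$. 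Combined with Weyl's inequality for singular values and the estimate $\|W^k-X^k\|\le\alpha_k\|X^k-X^{k-1}\|+h_k^{-1}\mu_k\|\nabla\tilde f(Z^k,\mu_k)\|$ (with $\|\nabla_X\tilde f(\cdot,\mu)\|\le L_f$, a property of the smoothing used here), this is essentially all the machinery. As a preliminary I would sharpen Corollary~\ref{corollary_2}: unrolling the recursion $\|X^{k+1}-X^k\|\le\bar\alpha\|X^k-X^{k-1}\|+C_1\mu_k$ — with $\bar\alpha:=\sup_k\alpha_k$, $X^{-1}=X^0$, and $2^\sigma\bar\alpha<1$ (from Assumption~\ref{assumption_4} and $\sup_k\mu_k/\mu_{k+1}=2^\sigma$) — yields $\|X^{k+1}-X^k\|\le C\mu_k$ for all $k$; moreover, for $k\in\mathcal N^s$ the test \eqref{3.9} fails, so with \eqref{3.1111} one gets $\|X^{k+1}-X^k\|^2\le\frac{2\alpha}{\varepsilon\tilde L}\mu_k^2\mu_{k+1}=o(\mu_k^2)$, i.e. $\|X^{k+1}-X^k\|=o(\mu_k)$ along $\mathcal N^s$. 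I would also record $\|W^k-X^k\|,\|Y^k-X^k\|,\|Z^k-X^k\|\to0$, so $W^k,Y^k,Z^k$ have the same accumulation points as $\{X^k\}$.

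For part (i), since $d^k$ is pinned down by the integer $s_k=\#\{i:\sigma_i(X^k)\ge v\}$, it suffices to show $s_k$ is eventually constant along $\mathcal N^s$. Assume to the contrary that some coordinate $i$ crosses the level $v$ at infinitely many $k\in\mathcal N^s$. For an upward crossing $\sigma_i(X^k)<v\le\sigma_i(X^{k+1})$ (so $i>s_k$), the coordinatewise update and Weyl's inequality give $\sigma_i(X^{k+1})=\sigma_i(W^k)-\lambda h_k^{-1}\mu_k/v\le\sigma_i(X^k)+\alpha_k\|X^k-X^{k-1}\|-c_0h_k^{-1}\mu_k$ with $c_0:=\lambda/v-L_f>0$ (Assumption~\ref{assumption_2}); the whole point of the parameter calibration in Assumption~\ref{assumption_4} (see the explicit choice in Remark~\ref{remark_2}) is to make the genuine decrease $c_0h_k^{-1}\mu_k$ produced by the soft-threshold outweigh the inertial perturbation $\alpha_k\|X^k-X^{k-1}\|\le 2^\sigma\bar\alpha C\mu_k$, so that $\sigma_i(X^{k+1})<v$ — a contradiction. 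For a downward crossing $\sigma_i(X^k)\ge v>\sigma_i(X^{k+1})=\sigma_i(W^k)$ one shows $\sigma_i(X^{k+1})\in[v-\|W^k-X^k\|,v)$ and then, invoking $\|X^{k+1}-X^k\|=o(\mu_k)$ on $\mathcal N^s$, that $\sigma_i$ can never return to $[v,\infty)$: at each later $\mathcal N^s$-index $j$ the soft-threshold again subtracts $c_0h_j^{-1}\mu_j$ while the perturbation is only $o(\mu_j)$. Hence each coordinate enters or leaves $\{i:\sigma_i(X^k)\ge v\}$ only finitely often along $\mathcal N^s$, so $d^k\equiv d^*$, $s_k\equiv s_*$ for $k\in\mathcal N^s$ large.

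For part (ii), note that because the singular values are ordered, $\mathcal A^c(\sigma(X))=\{\mathrm{rank}(X)+1,\dots,n\}$, so the claim is that all accumulation points along $\mathcal N^s$ have a common rank and that the vanishing singular values are reached in finitely many iterations. For $i\le s_*$ we have $\sigma_i(X^k)\ge v$ for all large $k\in\mathcal N^s$, hence $\sigma_i(\bar X)\ge v$ for every accumulation point $\bar X$; so $\{1,\dots,s_*\}\subseteq\mathcal A(\sigma(\bar X))$. For the soft-thresholded slots $i>s_*$ I use finite identification: once $\sigma_i(X^k)=0$, the update gives $\sigma_i(X^{k+1})\le\max\{\alpha_k\|X^k-X^{k-1}\|-c_0h_k^{-1}\mu_k,\,0\}=0$ for all large $k$ under the same calibration, so the zero set $\{i:\sigma_i(X^k)=0\}$ is eventually nondecreasing; conversely, if $\sigma_i(X^{k_i})>0$ along a subsequence with $X^{k_i}\to\bar X$ and $\sigma_i(\bar X)=0$, running the thresholded update backwards forces $\sigma_i(X^{k_i-1})\ge c\,\mu_{k_i-1}$ for a fixed $c>0$, which is incompatible with $\sigma_i$ decaying to $0$ once the zero set has begun to grow. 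Hence $\sigma_i(X^{k_i})=0$ for large $i$ exactly when $\sigma_i(\bar X)=0$, so $\mathcal A^c(\sigma(X^{k_i}))=\mathcal A^c(\sigma(\bar X))$ for large $i$; this common set is the eventual zero set of $\{X^k:k\in\mathcal N^s\}$, whence any two accumulation points share it, $\mathcal A^c(\sigma(\bar X))=\mathcal A^c(\sigma(X^*))$, and the displayed equality is then immediate. The delicate step, as the argument shows, is part (i): excluding a singular value that oscillates around $v$ indefinitely — this is precisely where the sharpened rate $\|X^{k+1}-X^k\|=o(\mu_k)$ on $\mathcal N^s$, Assumption~\ref{assumption_2}, and the calibration of $\alpha_k,h_k$ (Assumption~\ref{assumption_4}, Remark~\ref{remark_2}) are all needed.
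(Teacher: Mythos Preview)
Your overall strategy differs from the paper's, and the upward-crossing step has a real gap. The paper does not argue by ruling out crossings of the level $v$; it proves a much stronger one-step absorption: for $k\in\mathcal N^s$ large enough, if $\sigma_j(X^k)<v$ (so $d^k_j=1$) then $\sigma_j(X^{k+1})=0$ outright. This comes directly from the first-order optimality condition \eqref{3.20a}: with $d^k_j=1$ the $\theta$-term vanishes, and if $\sigma_j(X^{k+1})>0$ the subgradient of $\|\cdot\|_1$ contributes exactly $\lambda/v$; the remaining diagonal entries $(U\nabla\tilde f(Z^k,\mu_k)V^T)_{jj}$ and $h_k\mu_k^{-1}(U(X^{k+1}-Y^k)V^T)_{jj}$ are bounded respectively by $L_f$ and by $h_k\mu_k^{-1}\|X^{k+1}-Y^k\|$, and the latter is shown to be eventually below $\tfrac12(\lambda/v-L_f)$ using precisely the $o(\mu_k)$ rate you derived along $\mathcal N^s$. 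Hence the $j$-th equation cannot be satisfied, forcing $\sigma_j(X^{k+1})=0$, and by induction it stays $0$. So each coordinate is eventually $\equiv 0$ or $\ge v$, and $d^k$ stabilizes --- no crossing analysis at all. Part (ii) is then immediate.

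The gap in your argument is the ``calibration'' claim for the upward crossing. You assert that Assumption~\ref{assumption_4} (via Remark~\ref{remark_2}) guarantees $\alpha_k\|X^k-X^{k-1}\|\le 2^\sigma\bar\alpha C\mu_k < c_0 h_k^{-1}\mu_k$. But Assumption~\ref{assumption_4} is designed only to secure the descent inequality \eqref{3.1111}; it places no constraint relating $2^\sigma\bar\alpha C$ to $c_0 h_k^{-1}$, since your constant $C$ depends on $L_f$, $\lambda/v$ and $n$ through the unrolled recursion, while the bound on $\alpha_k$ involves none of these. Your $O(\mu_k)$ estimate for $\|X^k-X^{k-1}\|$ is therefore too coarse to close this step. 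The paper avoids the difficulty entirely because it feeds the $o(\mu_k)$ rate (which you derived but then used only for the downward crossing) into the optimality condition in the form $h_k\mu_k^{-1}\|X^{k+1}-Y^k\|\to 0$, and lets Assumption~\ref{assumption_2} ($\lambda/v>L_f$) do the rest. If you reroute your argument through that limit and the optimality condition, the upward/downward dichotomy and the separate ``once zero, stays zero'' analysis all collapse into a single line.
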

\begin{proof}
	(i)	From the first order optimality condition of (\ref{3.7a}), there exist $U,V\in\mathcal{M}(X^{k+1})$ such that
	\begin{align}\label{3.20a}
		 U(\nabla\tilde{f}(Z^k,\mu_k)\!+\!h_k\mu^{-1}_k(X^{k+1}\!-\!Y^k))V^T\!+\! \frac{\lambda}{v}\partial {\Vert \sigma(X^{k+1})\Vert_1}\!-\!\lambda\sum_{j=1}^{n}\nabla\theta_{d^k_j}(\sigma_j(X^{k+1}))E_j =\textbf{0}.
	\end{align}
From (\ref{3.1111}) and  for any $k\in\mathcal{N}^s$, the inequality (\ref{3.9}) does not hold, then we get
$$\frac{\varepsilon h_k }{2}\mu^{-1}_{{k+1}} {\Vert X^{k+1}-X^{k}\Vert}^2\leq \alpha{\mu^2_{k}}  .$$ That is,
$$
\sqrt{\frac{\varepsilon h_k}{2}}\mu^{-1}_{k+1}\Vert X^{k+1}-X^{k}\Vert\leq \sqrt {\alpha\mu^2_{k}\mu^{-1}_{k+1}}.
$$
According to $\lim_{k\to+\infty} \mu_k=0$, the nonincreasing of $\mu_k$  and the boundedness of $h_k$, we have that
\begin{align}\label{3.18a}
	\lim_{k\to +\infty} \mu^{-1}_{k+1} \Vert X^{k+1}-X^{k}\Vert =0,\quad \lim_{k\to +\infty} \mu^{-1}_{k} \Vert X^{k+1}-X^{k}\Vert =0.
\end{align}
Further, since
$$\mu^{-1}_k\Vert X^{k+1}-Y^k \Vert\leq \mu^{-1}_k\Vert X^{k+1}-X^k \Vert+ \mu^{-1}_k\Vert X^{k}-X^{k-1} \Vert,  $$
 we have $\lim_{k\to +\infty} \mu^{-1}_{k} \Vert X^{k+1}-Y^{k}\Vert =0$.  From the boundedness of $h_k$, we get that
\begin{align}\label{3.180a}
 \lim_{k\to +\infty}h_k\mu^{-1}_{k} \Vert X^{k+1}-Y^{k}\Vert =0,
\end{align}
	 then there exists $K\in\mathbb{N}$ such that for any $k\ge K$, it has that
	\begin{align}\label{3.20aa}
	 h_k\mu^{-1}_k\Vert X^{k+1}-Y^k \Vert
		< \varepsilon,
	\end{align}	
	where   $\varepsilon=\frac{1}{2}(\frac{\lambda}{v}-L_f).$
	
	If there exist $k_0\ge k$ and $j\in\{1,2,...,n \}$ such that $ \sigma_j(X^{k_0})<v$, then $d^{k_0}_j=1$ from (\ref{2.3}) which yields that $\nabla \theta_{d^{k_0}_j}(\sigma_j
	(X^{k_0+1}))=0$.
	
	Next, we will prove $\sigma_j
	(X^{k_0+1})=0$ by contradiction.
	If $\sigma_j
	(X^{k_0+1})\neq0$,  from (\ref{3.20aa}), Assumption \ref{assumption_2} and Definition \ref{def3.1}(iii), then it holds that
	$$   (U\nabla \tilde{f}(Z^k,\mu_k)V^T)_{jj}+h_k\mu^{-1}_k(U(X^{k+1}-Y^k)V^T)_{jj}+ \frac{\lambda}{v}\neq 0, $$
	 which contradicts (\ref{3.20a}). In conclusion, if there exist $k_0\ge k$ and $j\in\{1,2,\ldots,n \}$ satisfying
	 $\sigma_j(X^{k_0})<v$, then $\sigma_j(X^{k_0+1})=0$.  Then we get that $\sigma_j(X^{\tilde{k}})\equiv 0$ for any $\tilde k> k_0$.  Therefore, for sufficiently large $\tilde{k}$, it holds that $\sigma_j(X^{\tilde{k}})\equiv 0$ or $\sigma_j(X^{\tilde{k}})\ge v $ for $j=1,\ldots,n$.
So $d^k$  only changes finite number of times.
	
	(ii)
	 Suppose $\bar{X}$  and $X^*$ be any two accumualtion points of $\{X^k\}$, then we get
	   $\mathcal{A}^c(\sigma(\bar{X}))= \mathcal{A}^c(\sigma(X^*))$ by (i) of  this Theorem.  From the boundedness of $\{X^k\}$,   then there exists a subsequence $\{X^{k_i}, k_i\in\mathcal{N}^s\}$ of $\{X^k,k\in\mathcal{N}^s \}$ such that
	$\lim_{i\to +\infty}X^{k_i} = \bar{X}$. Then there exists $J\in\mathbb{N}$, for any $i\ge J$, it has that
	$$ \Vert  \sigma(X^{k_i})_{\mathcal A^{c}(\sigma(\bar{X}))}-  \sigma(\bar{X})_{\mathcal A^{c}(\sigma(\bar{X}))}\Vert=0,
	$$
	so the proof is completed.
\end{proof}

In the following, we establish the subsequence convergence result.
 \begin{theorem}\label{thm1}
 	Under the Assumption \ref{assumption_4},
any accumulation point of $ \left\{X^k: k\in\mathcal{N}^s \right\}$ is a lifted stationary point of problem (\ref{1.2}).
 \end{theorem}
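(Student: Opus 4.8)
The plan is to fix an accumulation point $\bar X$ of $\{X^k : k\in\mathcal N^s\}$, pick a subsequence $\{X^{k_i}\}$ with $\lim_{i\to\infty}X^{k_i}=\bar X$, and pass to the limit in the first-order optimality condition (\ref{3.20a}) of the subproblem, i.e. in
\[
U(\nabla\tilde f(Z^k,\mu_k)+h_k\mu_k^{-1}(X^{k+1}-Y^k))V^T+\tfrac{\lambda}{v}\partial\|\sigma(X^{k+1})\|_1-\lambda\sum_{j=1}^n\nabla\theta_{d^k_j}(\sigma_j(X^{k+1}))E_j=\mathbf 0,
\]
evaluated along the subsequence. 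The first thing I would establish is that the index-shift is harmless: by Corollary \ref{corollary_2}, $\|X^{k+1}-X^k\|\to0$, so $X^{k_i+1}\to\bar X$ as well, and likewise $Y^{k_i},Z^{k_i}\to\bar X$ since $\alpha_k,\beta_k$ are bounded and $\|X^k-X^{k-1}\|\to0$. Next, the perturbation term vanishes: from (\ref{3.180a}) we have $h_k\mu_k^{-1}\|X^{k+1}-Y^k\|\to0$, so $h_{k_i}\mu_{k_i}^{-1}(X^{k_i+1}-Y^{k_i})\to\mathbf 0$.

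Then I would handle each remaining term. For the gradient term, $\mu_k\downarrow0$ (Lemma \ref{lemma3.2}) and $Z^{k_i}\to\bar X$, so by Definition \ref{def3.1}(iii) any limit point of $\nabla\tilde f(Z^{k_i},\mu_{k_i})$ lies in $\partial f(\bar X)$; combined with the outer-semicontinuity of the unitary factors — passing to a further subsequence so that the pairs $(U,V)\in\mathcal M(X^{k_i+1})$ converge to some $(\bar U,\bar V)\in\mathcal M(\bar X)$, which is possible because $\mathbb Q^{m\times m}\times\mathbb Q^{n\times n}$ is compact and $\mathcal M$ has closed graph — the first term converges into $\bar U\,\partial f(\bar X)\,\bar V^T$. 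For the subgradient term, Theorem \ref{theorem3} is the key input: $d^k$ stabilizes (changes only finitely often), so for large $i$ we have $d^{k_i}=\bar d$ a fixed vector, and moreover $d^{k_i}_j$ agrees with $d^{\bar X}_j$ from (\ref{2.3}) — indeed on the zero singular values $\sigma_j(X^{k_i})=0<v$ so $d^{k_i}_j=1$, and on the nonzero ones $\sigma_j(X^{k_i})\ge v$ eventually (this is exactly the ``unified lower bound'' behavior proved in Theorem \ref{theorem3}(i)), giving $\sigma_j(\bar X)\ge v$ and $d^{\bar X}_j=2$. Hence $\nabla\theta_{d^{k_i}_j}(\sigma_j(X^{k_i+1}))\to\theta'_{d^{\bar X}_j}(\sigma_j(\bar X))$ term by term. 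Finally $\tfrac{\lambda}{v}\partial\|\sigma(\cdot)\|_1$ is an outer-semicontinuous, compact-convex-valued mapping, so its values along the subsequence have a limit point inside $\tfrac{\lambda}{v}\mathscr D(\partial\|x\|_1\mid_{x=\sigma(\bar X)})$.

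Assembling these limits, the limiting inclusion reads exactly
\[
\lambda\sum_{i=1}^n\theta'_{d^{\bar X}_i}(\sigma_i(\bar X))E_i\in\{\bar U\,\partial f(\bar X)\,\bar V^T+\tfrac{\lambda}{v}\mathscr D(\partial\|x\|_1\mid_{x=\sigma(\bar X)}):(\bar U,\bar V)\in\mathcal M(\bar X)\},
\]
which is precisely (\ref{2.2}), so $\bar X$ is a lifted stationary point of (\ref{1.2}).

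I expect the main obstacle to be the joint-limit bookkeeping for the singular-value decomposition: the unitary factors $(U,V)$ in (\ref{3.20a}) are not continuous functions of the matrix when singular values collide, so one must extract a convergent subsequence of factors and invoke the closedness of $\mathcal M$ (equivalently, the structure of $\partial\Phi$ recalled at the start of Section \ref{sec2}), and simultaneously ensure the same subsequence realizes the limits of $\nabla\tilde f$, of the $\ell_1$-subgradient, and of the stabilized $d^k$. A secondary technical point worth stating carefully is why $\sigma_j(X^{k_i})\ge v$ forces $\sigma_j(\bar X)\ge v$ rather than merely $\sigma_j(\bar X)\ge0$; this uses that $d^k$ is stabilized on the \emph{whole} tail, together with Theorem \ref{theorem3}(i), so that on the support the iterates never dip below $v$ for large $k$, and continuity of singular values then passes the bound to the limit. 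Everything else is routine continuity and the already-established summability/monotonicity estimates.
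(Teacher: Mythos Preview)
Your proposal is correct and follows essentially the same route as the paper: take a subsequence $k_i\in\mathcal N^s$ converging to $\bar X$, pass to the limit in the first-order optimality condition of the subproblem, use Theorem~\ref{theorem3} to stabilize $d^{k_i}$ to some $\bar d\in\mathcal D(\sigma(\bar X))$, and use (\ref{3.180a}) together with Definition~\ref{def3.1}(iii) to kill the perturbation term and land the gradient term in $\partial f(\bar X)$. The only difference is packaging: the paper works with the compact inclusion $\mathbf 0\in\nabla\tilde f(Z^{k_i},\mu_{k_i})+h_{k_i}\mu_{k_i}^{-1}(X^{k_i+1}-Y^{k_i})+\lambda\,\partial\Phi^{d^{k_i}}(X^{k_i+1})$ and invokes upper semicontinuity of the set-valued map $\partial\Phi^{\bar d}$ in one stroke, which absorbs the SVD-factor bookkeeping you identify as the main obstacle.
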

\begin{proof}
 By the boundedness of $\left\{ X^k: k\in\mathcal{N}^s \right\}$ from  Corollary \ref{corollary_1}, letting $\bar{X}$ be an accumulation point of $\left\{ X^k: k\in\mathcal{N}^s \right\}$, then there exists a subsequence $\left\{ X^{k_i} \right\}_{k_i\in\mathcal{N}^s}$ of $\left\{ X^k \right\}_{k\in\mathcal{N}^s}$ satisfying $\lim_{i\to +\infty}X^{k_i}\to \bar{X}$.

Using the first order necessary optimality condition of (\ref{3.7a}), we get that
\begin{align}\label{3.18}
\textbf{0}\in \nabla\tilde{f}(Z^{k_i}, \mu_{k_i})+h_{k_i}\mu^{-1}_{k_i}(X^{k_i+1}-Y^{k_i})+\lambda\xi^{k_i}   \quad \forall \;\xi^{k_i}\in\partial\Phi^{d^{k_i}}(X^{k_i+1}).
\end{align}
  By virtue of $\lim_{i\to+\infty}X^{k_i}=\bar{X}$ and the fact that elements in $ \left\{d^{k_i}: i\in\mathbb{N}\right\}$ are finite, there is a subsequence $ \left\{k_{i_j} \right\}$ of $\left\{k_i \right\}$ and $\bar{d}\in \mathcal{D}(\sigma(\bar{X}))$ such that $d^{k_{i_j}}=\bar{d}$, $\forall j\in\mathbb{N}$. Further, from  $\lim_{j\to+\infty}\Vert X^{k_{i_j}+1}-X^{k_{i_j}} \Vert= 0$, (\ref{3.18a}) and
  the upper semicontinuity of $\partial\Phi^{d^{k_{i_j}}}$,
   we get that
$$\lim_{j\to+\infty}  X^{k_{i_j}+1}  =\bar{X},\quad \lim_{j\to+\infty}  Z^{k_{i_j}}  = \bar{X},\quad \lim_{j\to+\infty}\mu^{-1}_{k_{i_j}}\Vert X^{k_{i_j}+1}-Y^{k_{i_j}}\Vert=0,
$$
and
\begin{align}\label{3.19}
\left\{ \lim_{j\to+\infty}\xi^{k_{i_j}}: \xi^{k_{i_j}}\in\partial\Phi^{d^{k_{i_j}}}(X^{k_{i_j}+1}) \right\}\subseteq \partial\Phi^{\bar{d}}(\bar{X}),
\end{align}
  taking $k_i=k_{i_j}$ in (\ref{3.18}) and letting $j\to+\infty$, together (\ref{3.18a}), (\ref{3.18}), (\ref{3.19}) with the Definition (\ref{def3.1})(iii),
   it deduces that there exist $ \bar{\psi}\in \partial f(\bar{X})$ and
 $ \bar{\xi}^{\bar{d}} \in\partial\Phi^{\bar{d}}(\bar{X})$ satisfying the following
 \begin{align*}
  \textbf{0}=\bar{\psi}+\lambda \bar{\xi}^{\bar{d}} .
\end{align*}
  This completes the proof.
\end{proof}

 \begin{remark}\label{remark_3}
 	 It's well known that, in the vector case, the bregman distance is a generalized form of the Euclidean distance. One can refer to the literatures \cite{Bolte_J,Teboulle_M} which have studied the theoretical analysis and methods based on Bregman distance. Similarly, in the matrix case, for any $X,Y\in \mathbb{S}^{n}$, the Euclidean distance $\frac{1}{2}\Vert X-Y \Vert^2$ used in solving the subproblem (\ref{3.7a}) can be replaced by general Bregman regularization distance $D_{\phi}(X,Y)$  where $\phi:\mathbb{S}^n\to\mathbb{R}$ is a convex and continuously differentiable function which has been studied in \cite{Ma_S,Kulis_B}.
 	
 	 For any $X,Y\in \mathbb{S}^{n}$, the Bregman distance on matrix space is defined as
 	$$ D_\phi(X,Y): =\phi(X)-\phi(Y)- tr(\nabla^T\phi(X)(X-Y)).   $$
 	Specially,
 	when $\phi(X)=\frac{1}{2}\Vert X\Vert^2$, we have
 	$D_{\phi}(X,Y)=\frac{1}{2}\Vert X-Y\Vert^2$.	
 	If the function $\phi$ is strong convex with the strong convex modulus $\varrho$, then $D_\phi(X,Y)\ge \frac{\varrho}{2}\Vert X-Y\Vert^2$.  In this case, the subproblem in (\ref{3.7a}) is
 	generalized as follows:
 	\begin{equation}\label{3.7aa}
 		\begin{cases}
 			Y^k=X^k+\alpha_k(X^k-X^{k-1}),\\
 			Z^k=X^k+\beta_k(X^k-X^{k-1}),\\
 			X^{k+1}=\mathop {\arg\min\limits_{X\in\mathbb{S}^{n} }}{Q'_{d^k}(X,Y^k,Z^k,\mu_k)}
 		\end{cases}
 	\end{equation}
 	where $Q'_{d^k}(X,Y^k,Z^k,\mu_k)=
 	\langle X, \nabla \widetilde f(Z^k,\mu_k)-h_k\mu^{-1}_k(Y_k-X_k)\rangle+h_k \mu^{-1}_k  D_{\phi}(X,X^k)+\lambda{{\Phi}^{d^k}}(X).
 	$
 	In this case, the auxiliary sequence defined as (\ref{3.111})  is descending under the Assumption \ref{assumption_5}  and the proof is presented in Appendix \ref{sec5}.
 	The parameter constraints in  Assumption \ref{assumption_4} is generalized as follows:
 	\begin{assumption}\label{assumption_5}
 	 for any
 	$0<\varepsilon\ll 1$, $\alpha_k\in[0, \frac{\varrho-2^{\sigma}\varepsilon}{1+2^{\sigma}})$, ${\beta_k}\in[0,1]$, $\{h_k\}$ is nonincreasing and satisfies  $h^{-1}_0\leq h^{-1}_k\leq \min\{\frac{\varrho-\alpha_k-(\alpha_{k}+\varepsilon)\frac{\mu_k}{\mu_{k+1}}}{ (1-\beta_k)\tilde{ L}},\frac{\alpha_k}{\beta_k\tilde{L}}\}.$
 	\end{assumption}

 	 Particularly, if the convex  function $\phi$ is a strong convex function with the modulus $\varrho$ satisfying $\varrho\ge(1+2^{\sigma})$, then $\alpha_k\in[0,1), \beta_k\in [0,1]$. This means that the parameter $\alpha_k$ and $\beta_k$ can be chosen as the sFISTA system (\ref{c}) with fixted restart.
 \end{remark}

\section{Numerical Experiments }\label{sec4}
In this section, we aim to verify the efficiency of the GIMSPG algorithm compared with the MSPG algorithm \cite{Yu_Q_Zhang_X}, FPCA \cite{Ma_S}, SVT \cite{Cai_J} and VBMFL1 \cite{Zhao_Q}  on matrix completion problem, i.e.,
\begin{align}\label{4.2}
 \min_{X\in \mathbb{R}^{m\times n}} \mathcal{F}(X):
=\Vert P_\Omega(X-M) \Vert_1+ \lambda\cdot {\rm{rank(X)}},
\end{align}
where $M\in\mathbb{R}^{m\times n}$ and $\Omega$ is the subset of the index set of the matrix $M$, $P_{\Omega}(\cdot)$ is the projection operator which projects onto the  subspace of sparse matrices with nonzero entries confined to the index subset $\Omega$. The goal of this problem is to find the missing entries of the partially observed  low-rank matrix $M$ based on the known elements $\{M_{ij}: i,j\in\Omega\}$. All the numerical experiments are carried out on 1.80GHz Core i5 PC with 12GB of RAM.

In the following, we denote  $ \bar{X}$    the output of the GIMSPG algorithm  and use ``Iter" ``time" to present the number of iterations and the CPU time in seconds respectively. We pick $ X^0= P_{\Omega}(M)$ as the initial point.  The stopping standard is set as
$$\frac{\Vert X^{k+1}- X^k\Vert}{\Vert X^k \Vert} \leq 10^{-4}.$$
For the following $\ell_1$ loss function,
$$ f(x)= \Vert  x-b \Vert_1, \quad {\rm{with}}\;x,b\in \mathbb{R}^{n},
$$
the smoothing function is defined as
\begin{align}
\tilde{f}(x,\mu)= \sum_{i=1}^{n}\tilde{\theta}( x_i-b_i, \mu) \quad {\rm{with}} \quad
\tilde\theta(s,\mu)=\begin{cases}
\vert s \vert \;                 &{\rm{if}} \,\vert s \vert>\mu,\\
\frac{s^2}{2\mu}+\frac{\mu}{2}\; & {\rm {if}} \,\vert s\vert\leq\mu.
\end{cases}
\end{align}
In the numerical simulation, the non-Gaussian noise momdel is set as the typical two-component Gaussian mixture model(GMM), the specific form of the probability density function is
\begin{align}
	p_v(i)=(1-c)N(0,\sigma^2_A)+cN(0,\sigma^2_B),
\end{align}
where $N(0,\sigma^2_A)$ indicates as the general noise disturbance with variance $\sigma^2_A$ and $N(0,\sigma^2_B)$ represents the outliers with a large variance $\sigma^2_B$. The parameter $c$
 trade off between them.

 The  parameters in the GIMSPG algorithm are defined as follows: $\tilde{L}=1$,
  $\sigma =0.9$, $\mu_0=1$, $\alpha=1$, $\lambda=20$.
     From  Remark \ref{remark_2},
  we can take $\alpha_k =\frac{0.98\beta_k }{1+2^{\sigma}\beta_k}$ and $h_k=\frac{(1+2^{\sigma}\beta_k)
  \tilde{L}}{0.98}$.

   Moreover, in order to chose an appropriate value of $\beta_k$,
   we perform the experiments to test the influence of different values of $\beta_k$  on
   GIMSPG algorithm from the RMSE, the last value  $\mu$ of smoothing factor and $time$. The details are presented in Figure \ref{fig1}. From it, we observe that the higher value of $\beta_k$, the used time is less, but RMSE and the last value of the smoothing factor   is larger. For the sake of balance, in the following experiments, we chose $\beta_k=0.4$ in Section \ref{4.1} and \ref{4.21}. In Section \ref{4.3}, we chose $\beta_k=0.3$.
\subsection{Matrix completion on the random data}\label{4.1}
In this subsetion, we perform the numerical experiments on random generated data.
In order to make a fair  comparison, our results are averaged over 20 independent tests. Besides, the way of the generated data is same as \cite{Yu_Q_Zhang_X}, in detail, we first randomly generate matrices $M_L\in \mathbb{R}^{m\times r}$ and $M_R\in \mathbb{R}^{n\times r}$ with i.i.d. standard Gaussian entries and let $M=M_LM^T_R$. We then sample a subset $\Omega$ with sample ration $sr$ uniformly at random, where $sr=\frac{\vert \Omega\vert}{mn}$. In the following experiments, the parameters in the GMM noise are set as $\sigma^2_A=0.0001,\, \sigma^2_B=0.1,\, c=0.1$. The rank of the matrix $M$ is set as $r=30$ and the sampling ration $sr$ are given three different cases $sr=0.2,\,sr=0.6,\,sr=0.8$. The size of the square matrix $m$ increases from 100 to 200 with increament 10.

The root-mean-square error (RMSE) is defined as an error estimation criterion,
\begin{align}\label{4.4}
	\mathrm{RMSE}: = \sqrt{\frac{\Vert  \bar{X}-M \Vert^2}{mn}}.
\end{align}

 The Figure \ref{fig2} presents the last value of the objective function and the last value $\mu$ of smoothing factor for GIMSPG algorithm and MSPG algorithm under different matrix size and $sr=0.2,\,sr=0.6,\,sr=0.8$. It can be easily seen that the last value of the objective function value of GIMSPG algorithm is lower than the MSPG algorithm except $sr=0.2$.  Besides, the last value $\mu$ of smoothing factor  of GIMSPG algorithm are far lower than the MSPG algorithm, that is, the model in GIMSPG algorithm can be better approxiamtion to the original problem. Moreover,  we illustrate the efficiency of GIMSPG algorithm compared with MSPG, FPCA, SVT and VBMFL1 from RMSE and the running time $T$ two aspects under different matrix size for $sr=0.2,\,sr=0.6,\,sr=0.8$ in Tabels \ref{tab0},\ref{tab1},\ref{tab2}.  From them, we see that
  the running time of GIMSPG algorithm  is the least one in these algorithms.
    We also observe that  as the dimension increases, so does the time.
     Compared with MSPG algorithm, when the sample ratio is lower, the RMSE of GIMSPG algorithm is better. And compared with the other algorithms, the  GIMSPG algorithm always has the least value of RMSE.

   \begin{figure}[htpb]
   	\footnotesize
  	\centering

  	\begin{minipage}[t] {0.4 \textwidth}
  		\includegraphics[scale=0.05]{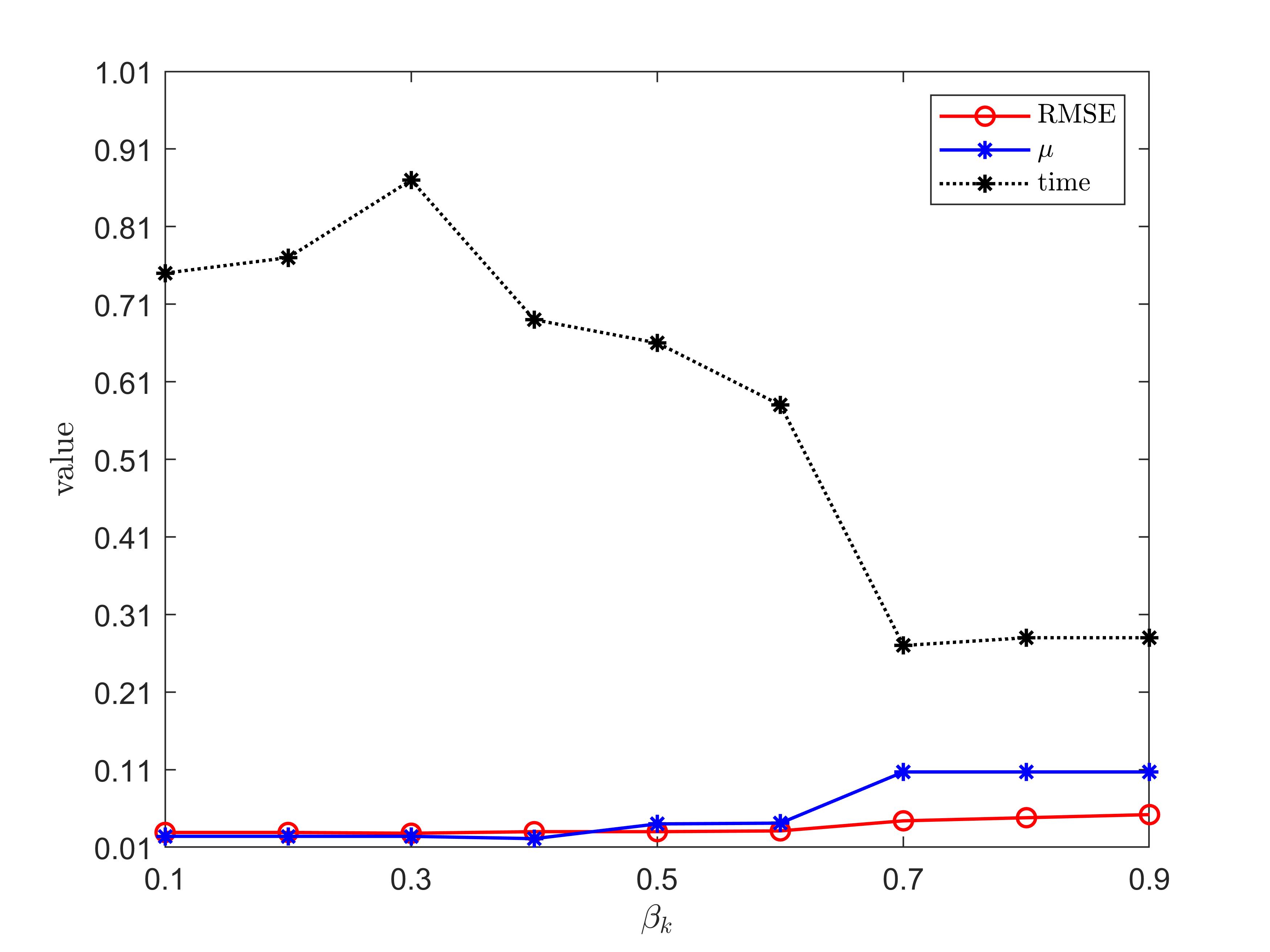}
  		\end {minipage}	
  		\centering\caption{The  $time$, the last value of $\mu$ and   RMSE  for $sr= 0.6$ and $m,n=150$}
  		\label{fig1}
  	\end{figure}
  	\begin{figure}[htpb]
  		 \centering
  		\begin{minipage}[t] {0.5\textwidth}
  			\includegraphics[scale=0.05]{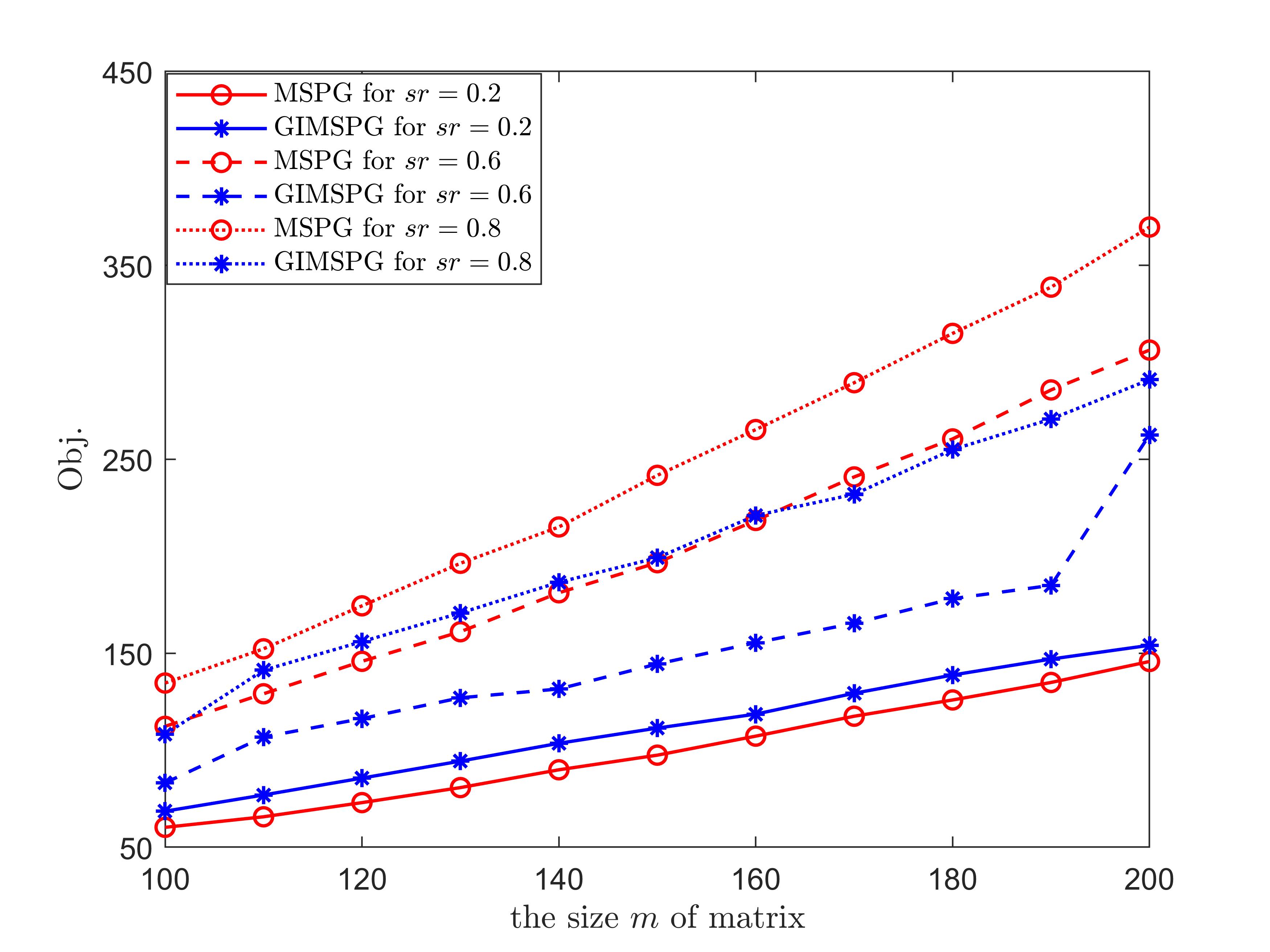}
  		\end{minipage}	
  		\centering
  		\begin{minipage}[t] {0.40\textwidth}
  			\includegraphics[scale=0.05]{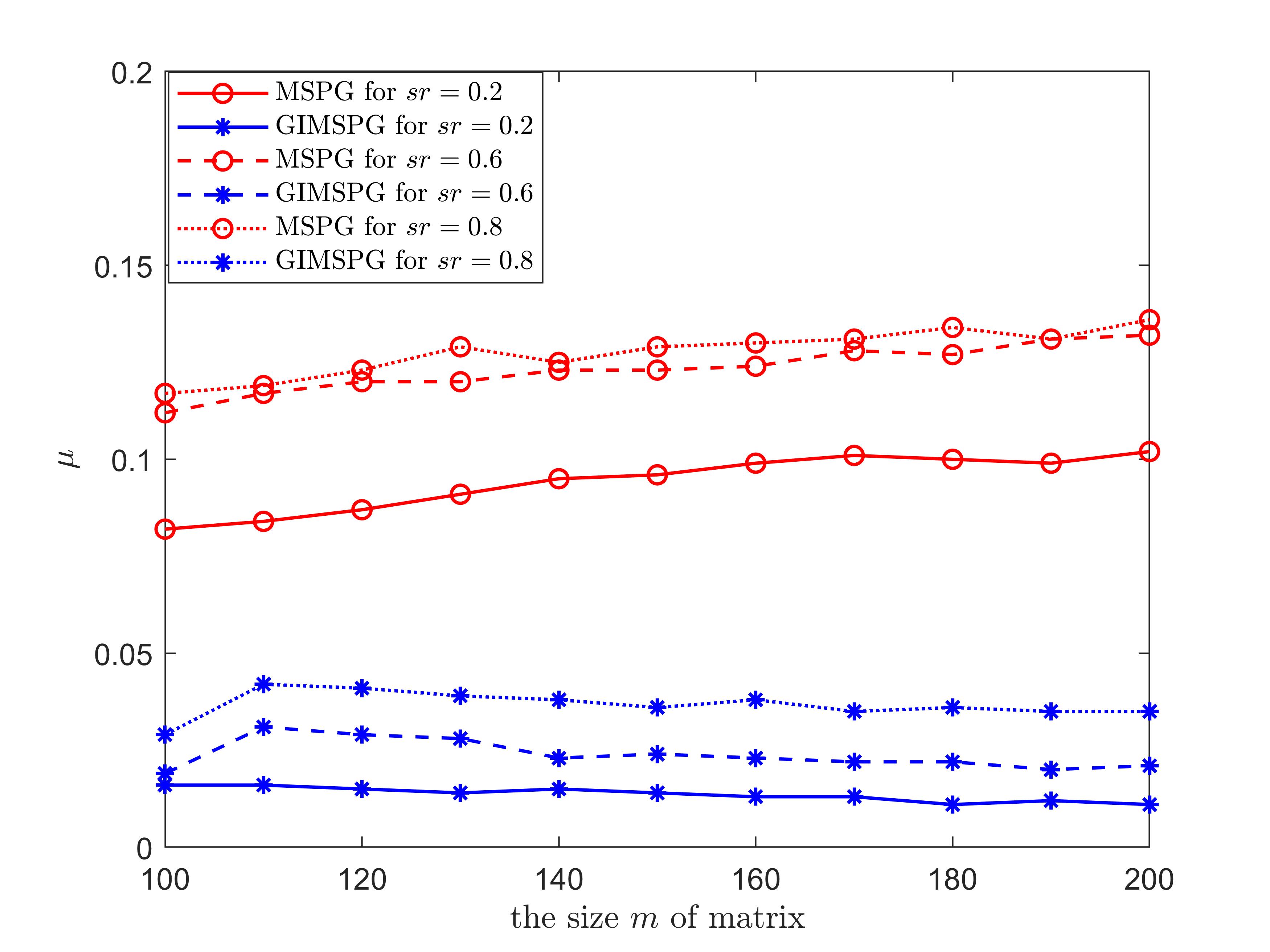}
  		\end{minipage}
  		\centering\caption{The last value of the objective function (\ref{1.2}) and smoothing factor $\mu$ for $sr=0.2,0.6,0.8$}
  		\label{fig2}
  	\end{figure}

\begin{table}[htb]
 	\centering
	\caption{Numerical results of the  random matrix problem for $sr=0.8$}
	\label{tab0}
	\setlength{\tabcolsep}{0.8mm}{
		\begin{tabular}{ccccccccccc} 
			\toprule
			\multirow{2}{*}{$m$}  &  \multicolumn{5}{l}{RMSE}& \multicolumn{5}{l}{$T$} \\
			\cmidrule(lr){2-6}\cmidrule(lr){7-11}
			&GIMSPG&MSPG& VBMFL1& FPCA&SVT & GIMSPG &MSPG& VBMFL1& FPCA&SVT  \\
		\midrule
		100   &0.030&0.031 &0.051&0.551 &0.242
		&0.32&1.01 &3.33&5.13&3.70	\\
		
		{110}	&0.029&0.030 &0.045&0.472&0.076
		&0.33&1.21&3.53&4.97&4.03
		\\
		{120}	&0.029&0.029&0.038&0.352&0.049
		&0.33&1.26&3.56&5.55&4.31
		\\
		{130}	&0.028&0.029&0.036&0.289&0.036
		&0.45&1.46&3.84&6.44&4.32
		\\
		{140} & 0.028&0.028&0.035&0.284	&0.028
		&0.51&1.78&3.56&6.72&4.51
		\\
		{150}  &0.028&0.028&0.034&0.271	&0.026
		&0.55&1.85&3.58&6.72&4.75
		\\
		{160}	&0.028&0.028&0.037&0.271&0.023
		&0.62&2.21&4.06&7.74&4.52
		\\
		{170}	&0.028&0.027&0.034&0.230&0.022
		&0.78&2.66&4.30&8.28&4.70
		\\ 	
		{180}	&0.027&0.026&0.032&0.209&0.022
		&0.84&2.89&4.16&8.86&4.41
		\\ 	
		{190}	&0.025&0.026&0.031&0.201&0.021
		&0.90&3.19&4.59&9.72&4.31
		\\ 	
		{200}	&0.025&0.025&0.030&0.196&0.020
		&1.04&3.47&4.94&10.03&4.84
		\\ 	
		\bottomrule
	\end{tabular}}
\end{table}

\begin{table}[htpb]
	 	\centering
	\caption{Numerical results of the  random matrix problem for $sr=0.6$ }
	\label{tab1}
\setlength{\tabcolsep}{0.8mm}{
	\begin{tabular}{ccccccccccc} 
		\toprule
		\multirow{2}{*}{$m$}  &  \multicolumn{5}{l}{RMSE}& \multicolumn{5}{l}{$T$} \\
		\cmidrule(lr){2-6}\cmidrule(lr){7-11}
		& GIMSPG&MSPG& VBMFL1& FPCA&SVT & GIMSPG &MSPG& VBMFL1& FPCA&SVT  \\
		\midrule
		100   &0.036&0.036 &0.124&1.640 &1.860
		&0.42&1.40 &5.11&4.71&5.23	\\
		
		{110}	&0.033&0.033 &0.080&1.360&1.380
		&0.57&1.50&5.19&5.09&4.84
		\\
		{120}	&0.033&0.033&0.062&0.870&1.324
		&0.85&1.70&5.38&5.94&5.70
		\\
		{130}	&0.031&0.031&0.054&0.610&0.972
		&0.91&1.88&5.44&6.05&8.41
		\\
		{140} & 0.030&0.031&0.051&0.500	&0.707
		&1.09&2.10&5.52&6.67&7.27
		\\
		{150}  &0.029&0.029&0.047&0.437	&0.552
		&1.87&2.32&6.21&7.17&7.47
		\\
		{160}	&0.028&0.028&0.042&0.396&0.368
		&1.90&2.41&6.36&8.09&8.55
		\\
		{170}	&0.027&0.028&0.041&0.357&0.219
		&1.99&2.99&6.59&8.64&9.01
		\\ 	
		{180}	&0.027&0.027&0.038&0.326&0.163
		&0.84&3.10&7.16&10.06&9.03
		\\ 	
		{190}	&0.026&0.027&0.035&0.310&0.127
		&2.14&4.11&7.51&10.23&10.06
		\\ 	
		{200}	&0.026&0.027&0.034&0.289&0.068
		&2.38&4.37&8.14&10.55&10.84
		\\ 	
		\bottomrule
	\end{tabular}}
\end{table}
\begin{table}[htpb]
	 	\centering
	\caption{Numerical results of the  random matrix problem for $sr=0.2$ }
	\label{tab2}
	\setlength{\tabcolsep}{0.8mm}{
	\begin{tabular}{ccccccccccc} 
		\toprule
		\multirow{2}{*}{$m$}  &  \multicolumn{5}{l}{RMSE}& \multicolumn{5}{l}{$T$} \\
		\cmidrule(lr){2-6}\cmidrule(lr){7-11}
		& GIMSPG&MSPG& VBMFL1& FPCA&SVT & GIMSPG &MSPG& VBMFL1& FPCA&SVT  \\
		\midrule
		100   &0.070&0.075&5.649&5.770 &6.536
		&0.69&1.40 &13.42&2.38&8.17	\\
		
		{110}	&0.071&0.074 &5.496&5.660&6.497
		&0.80&1.50&16.59&2.64&995
		\\
		{120}	&0.069&0.073&5.486&5.540&6.563
		&0.94&1.60&17.38&3.27&10.41
		\\
		{130}	&0.066&0.073&5.456&5.300&6.618
		&1.08&1.70&18.33&3.27&13.51
		\\
		{140} & 0.066&0.075&5.446&5.209	&6.255
		&1.50&1.98&20.84&3.73&15.41
		\\
		{150}  &0.066&0.072&5.442&5.270	&6.230
		&1.66&2.19&21.84&4.06&15.84
		\\
		{160}	&0.065&0.071&5.508&5.230&6.210
		&1.80&2.57&25.67&3.99&27.09
		\\
		{170}	&0.065&0.068&5.309&5.050&6.194
		&2.30&2.80&26.77&4.31&23.89
		\\ 	
		{180}	&0.064&0.069&5.212&5.010&5.984
		&2.60&3.39&29.16&5.25&29.01
		\\ 	
		{190}	&0.062&0.068&5.091&4.970&5.020
		&2.63&3.58&29.98&5.98&29.21
		\\ 	
		{200}	&0.062&0.067&4.239&4.680&5.064
		&3.20&4.82&32.83&6.66&29.77
		\\ 	
		\bottomrule
	\end{tabular} }
\end{table}

\subsection{Image inpainting}\label{4.21}

In this subsection, we aim to illustrate the efficiency of GIMSPG algorithm by
 solving a grayscale inpainting problem with non-Gaussian noise. The image inpainting problem is to fill the missing pixel values of the image at given pixel locations. In fact, the grayscale image can be seen as the matrix, and the image inpainting problem can be represented as the matrix completion problem if the matrix satisfies the property of the low rank.
In our experiments, two different grayscale images are considered which are from the USC-SIPI image database
 \footnote{\url{http://sipi.usc.edu/database/.}}. In detail, they are  ``Chart''  and ``Ruler". ``Chart'' is an image with
$256\times 256$ pixels and rank $r=185$. ``Ruler'' is an image with $512\times512$ with rank $r=67$.
To evaluate the performance of the GIMSPG algorithm under the non-Gaussian noise, the peak signal-to-noise ratio (PSNR) is considered as an evaluation metric which is defined as follows:
$$\mathrm{PSNR}: =10\mathrm{log}_{10} (\frac{mn}{\Vert \bar{X}-M \Vert^2_F}).
$$

 We solve the image inpainting problem under three different sample ratios $sr=0.2,\,sr=0.6,\,sr=0.8$. From the results in Table \ref{tab3}, we observe that the last value $\mu$ of smoothing factor of GIMSPG algorithm is less than MSPG algorithm in each case. The used time $T$ of GIMSPG  is the  least in these algorithms. The PSNR of GIMSPG algorithm is  higher than MSPG for $sr=0.2$ and obviously higher than algorithm in all the cases. From Figures \ref{fig4}, \ref{fig5},
  one can see that the recoverability of  GIMSPG algorithm is slightly better than that MSPG algorithm espeically in the lower sample rate and apparently better than others.

\begin{figure*}[!ht]
	\centering
	\begin{minipage}[b]{1\linewidth}
		\subfloat[Observed]{
			\begin{minipage}[b]{0.14\linewidth}
				\centering
				\includegraphics[width=\linewidth]{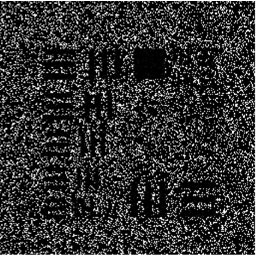}\vspace{1pt}
				\includegraphics[width=\linewidth]{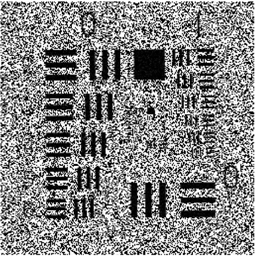}\vspace{1pt}
				\includegraphics[width=\linewidth]{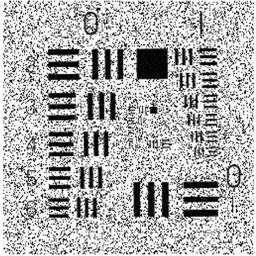}
			\end{minipage}
		}
		\hfill
		\subfloat[GIMSPG]{
			\begin{minipage}[b]{0.14\linewidth}
				\centering
				\includegraphics[width=\linewidth]{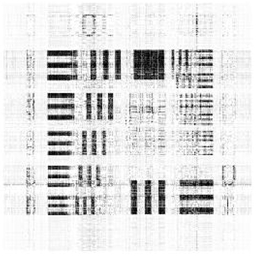}\vspace{1pt}
				\includegraphics[width=\linewidth]{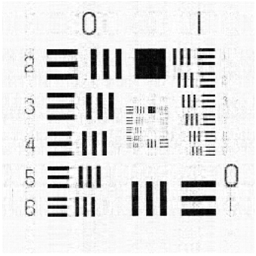}\vspace{1pt}
				\includegraphics[width=\linewidth]{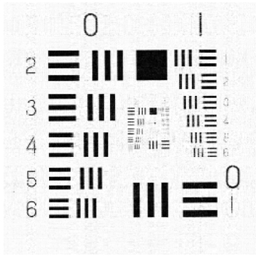}
			\end{minipage}
		}
		\hfill
		\subfloat[MSPG]{
			\begin{minipage}[b]{0.14\linewidth}
				\centering
				\includegraphics[width=\linewidth]{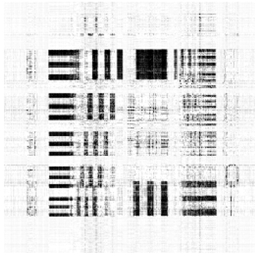}\vspace{1pt}
				\includegraphics[width=\linewidth]{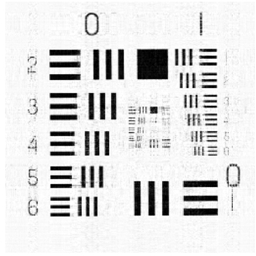}\vspace{1pt}
			
				\includegraphics[width=\linewidth]{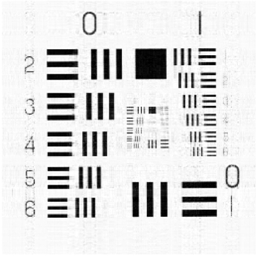}
			\end{minipage}
		}
		\hfill
		\subfloat[VBMFL1]{
			\begin{minipage}[b]{0.14\linewidth}
				\centering
				\includegraphics[width=\linewidth]{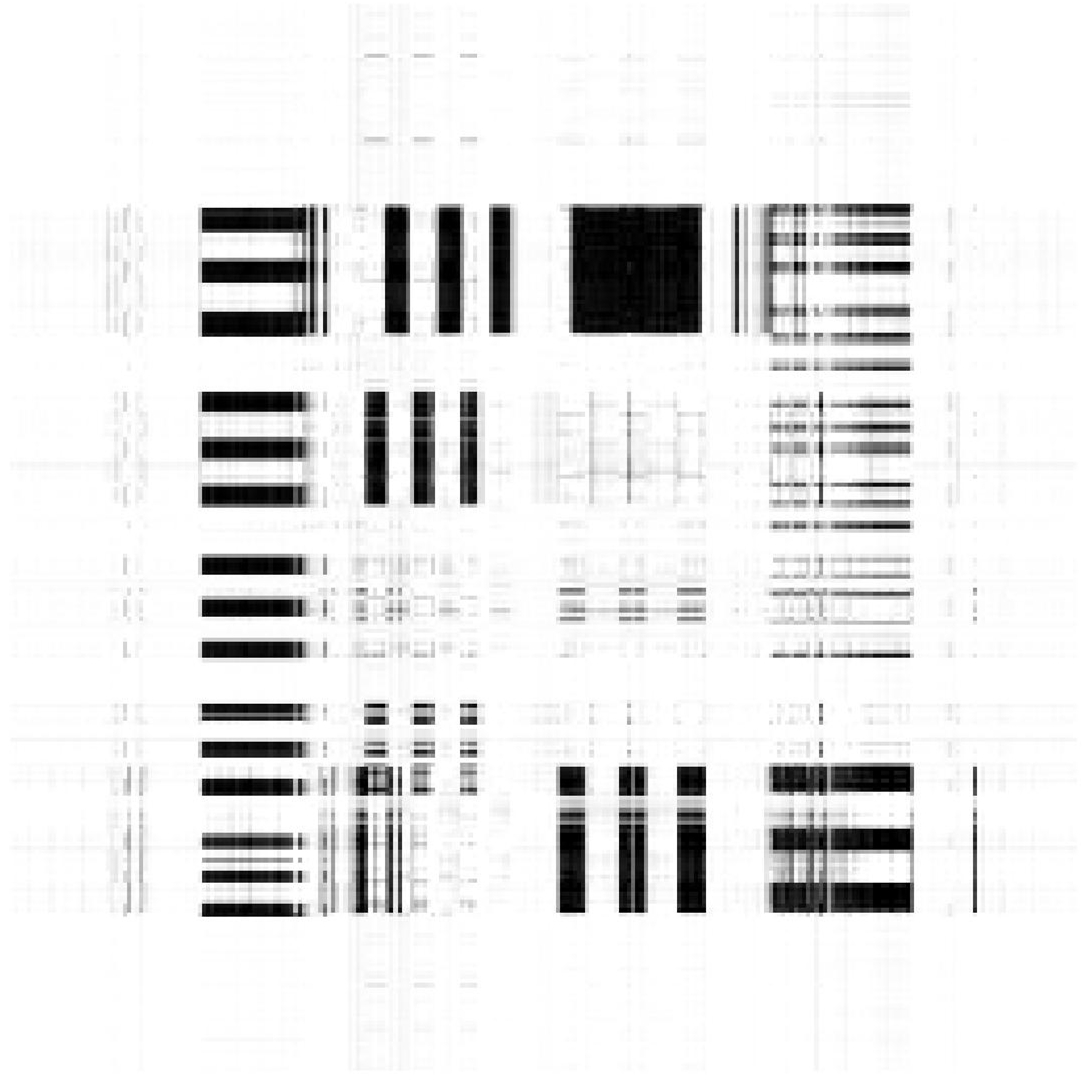}\vspace{1pt}
				\includegraphics[width=\linewidth]{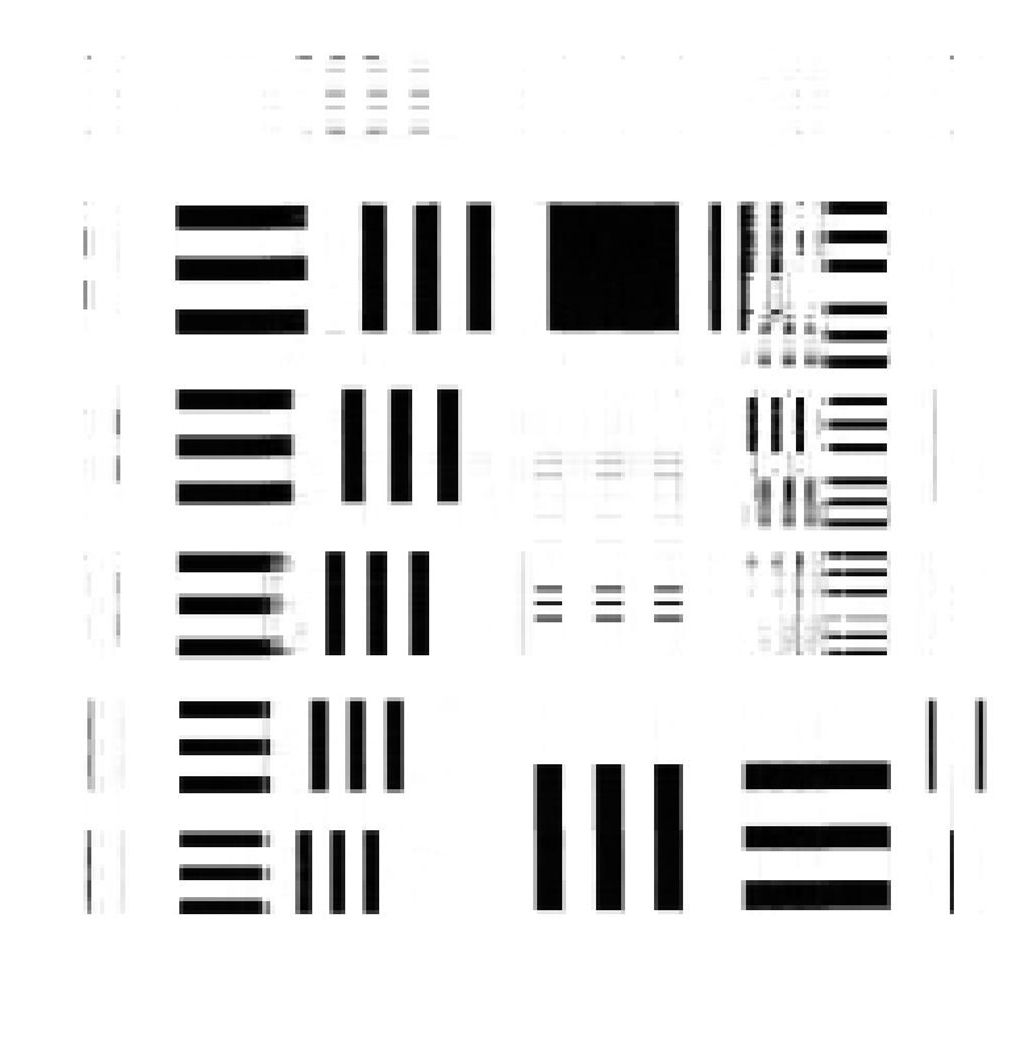}\vspace{1pt}
			
				\includegraphics[width=\linewidth]{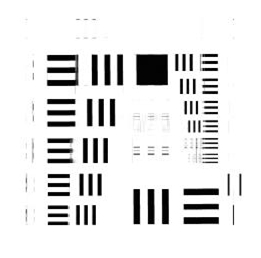}
			\end{minipage}
		}
	\hfill
	\subfloat[FPCA]{
		\begin{minipage}[b]{0.14\linewidth}
			\centering
			\includegraphics[width=\linewidth]{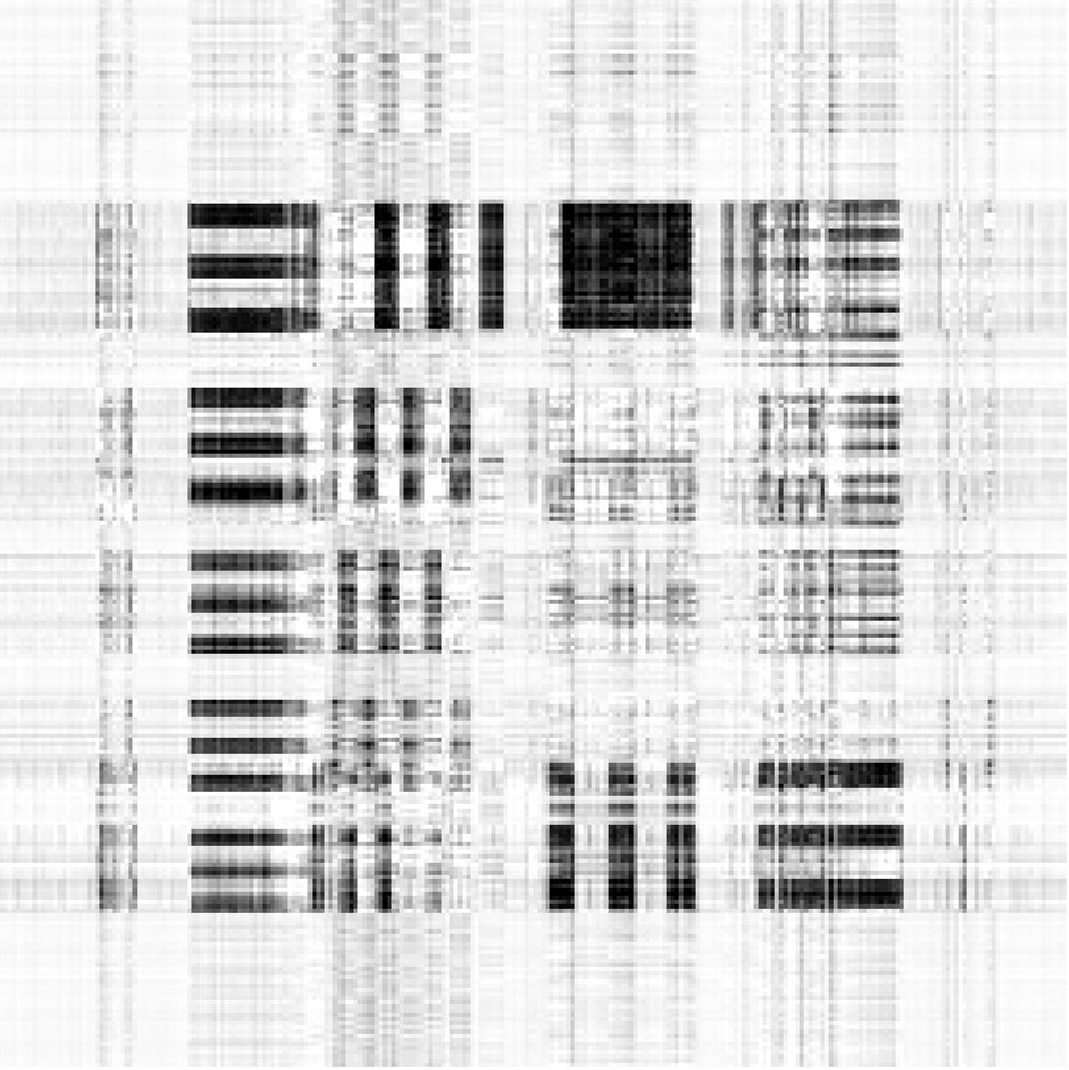}\vspace{1pt}
			\includegraphics[width=\linewidth]{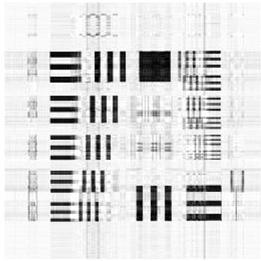}\vspace{1pt}
			
			\includegraphics[width=\linewidth]{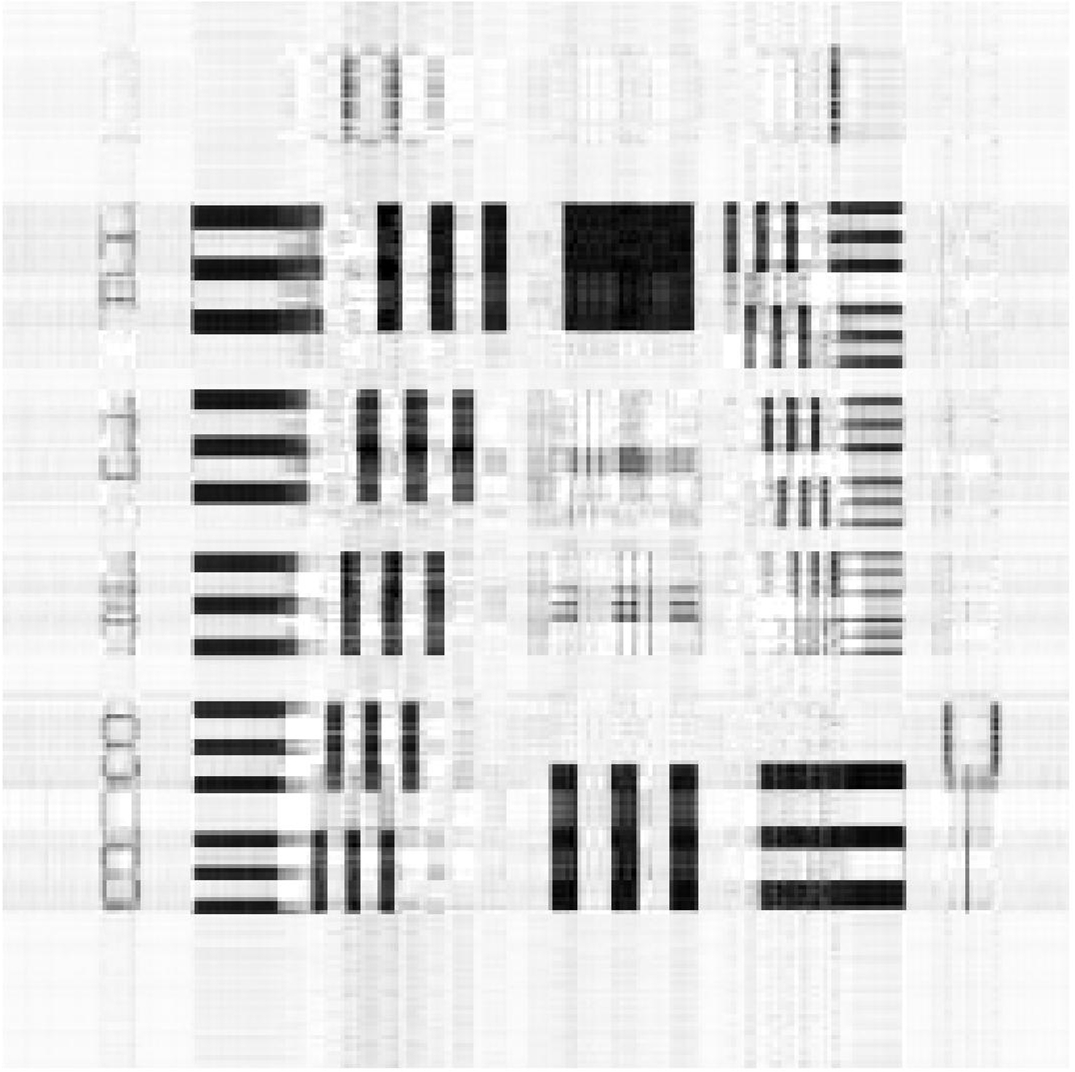}
		\end{minipage}
	}	
	\hfill
\subfloat[SVT]{
	\begin{minipage}[b]{0.14\linewidth}
		\centering
		\includegraphics[width=\linewidth]{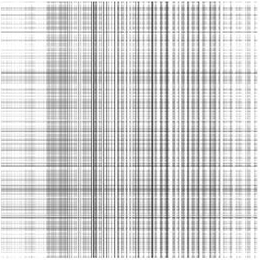}\vspace{1pt}
		\includegraphics[width=\linewidth]{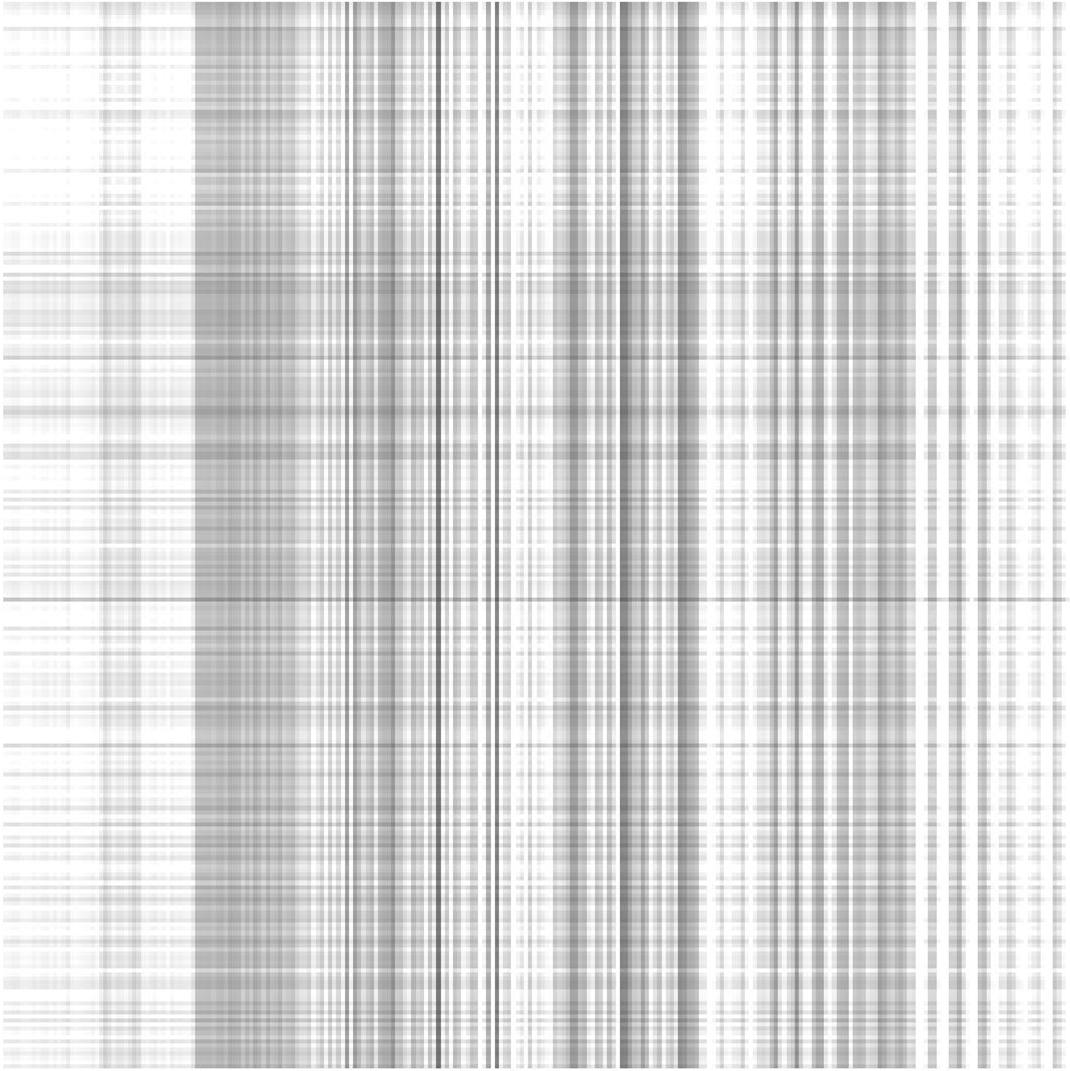}\vspace{1pt}		
		\includegraphics[width=\linewidth]{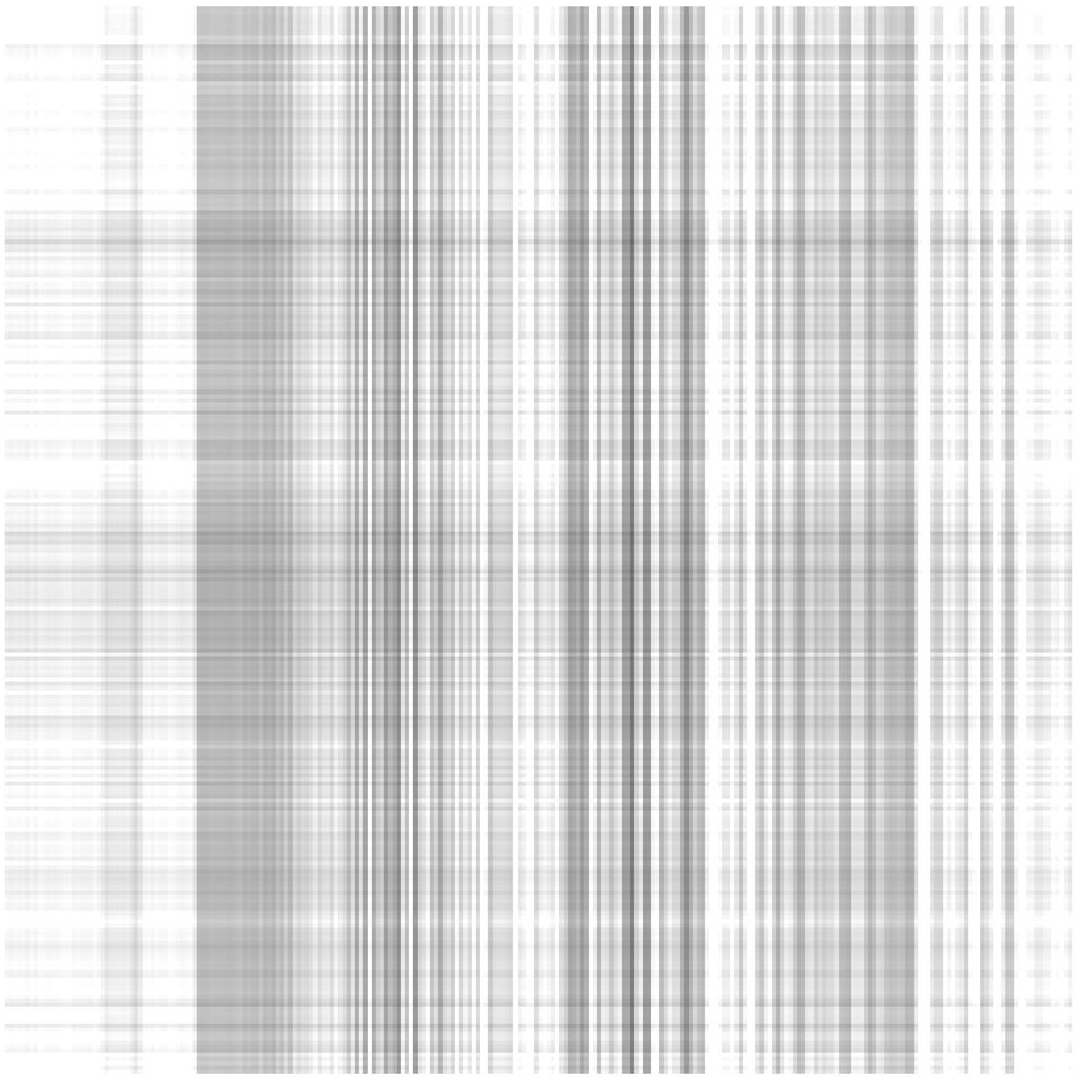}
	\end{minipage}
}	
	\end{minipage}
	\vfill
		\caption{ Results of ``Chart'' recovery. The first column is the sample images with $sr = 0.2, \,0.6,\, 0.8.$ The rest columns are the recovery images by the corresponding algorithms}
		\label{fig4}
\end{figure*}

\begin{figure*}[!ht]
	\centering
	\begin{minipage}[b]{1\linewidth}
		\subfloat[Observed]{
			\begin{minipage}[b]{0.14\linewidth}
				\centering
				\includegraphics[width=\linewidth]{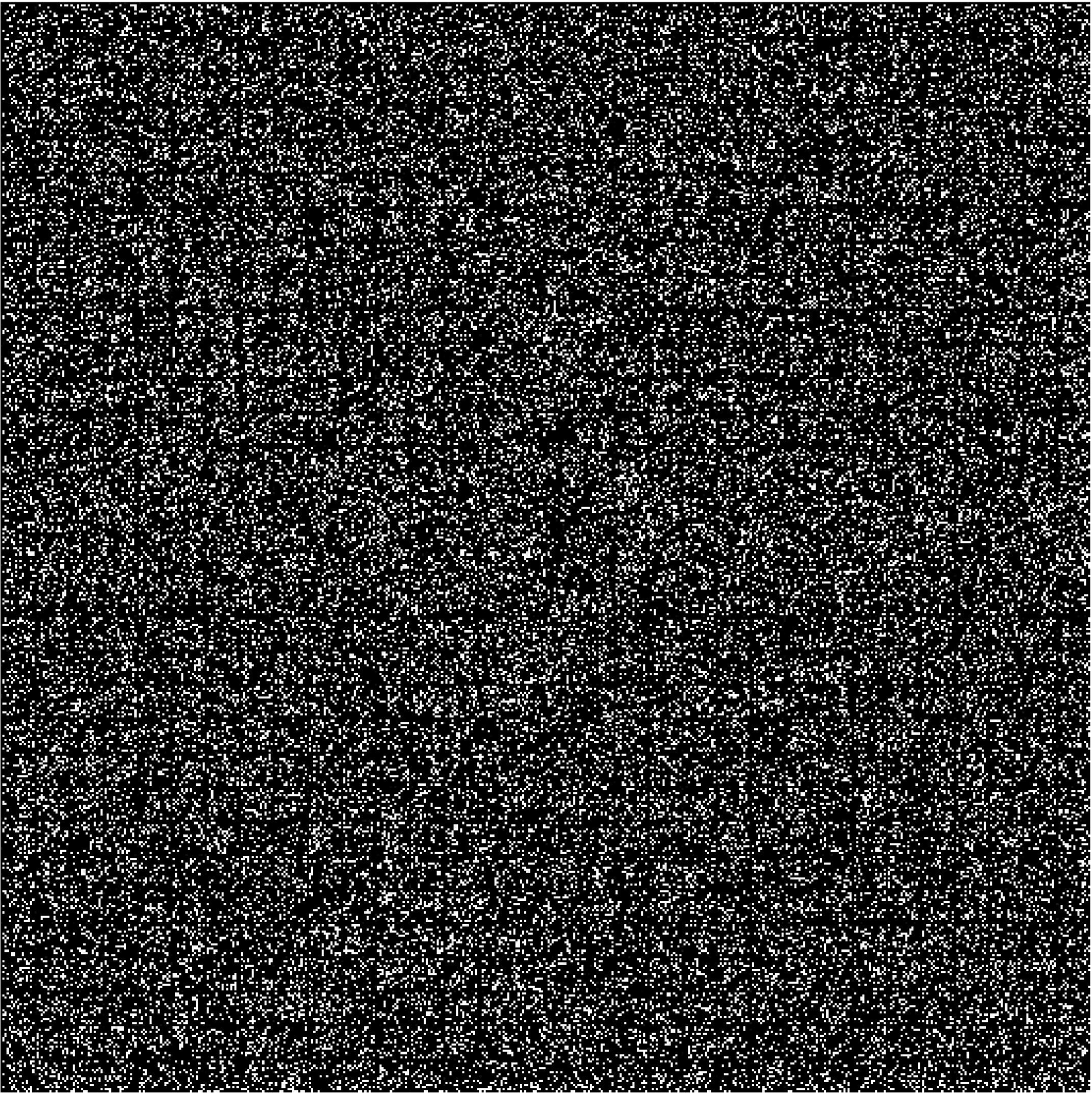}\vspace{1pt}
				\includegraphics[width=\linewidth]{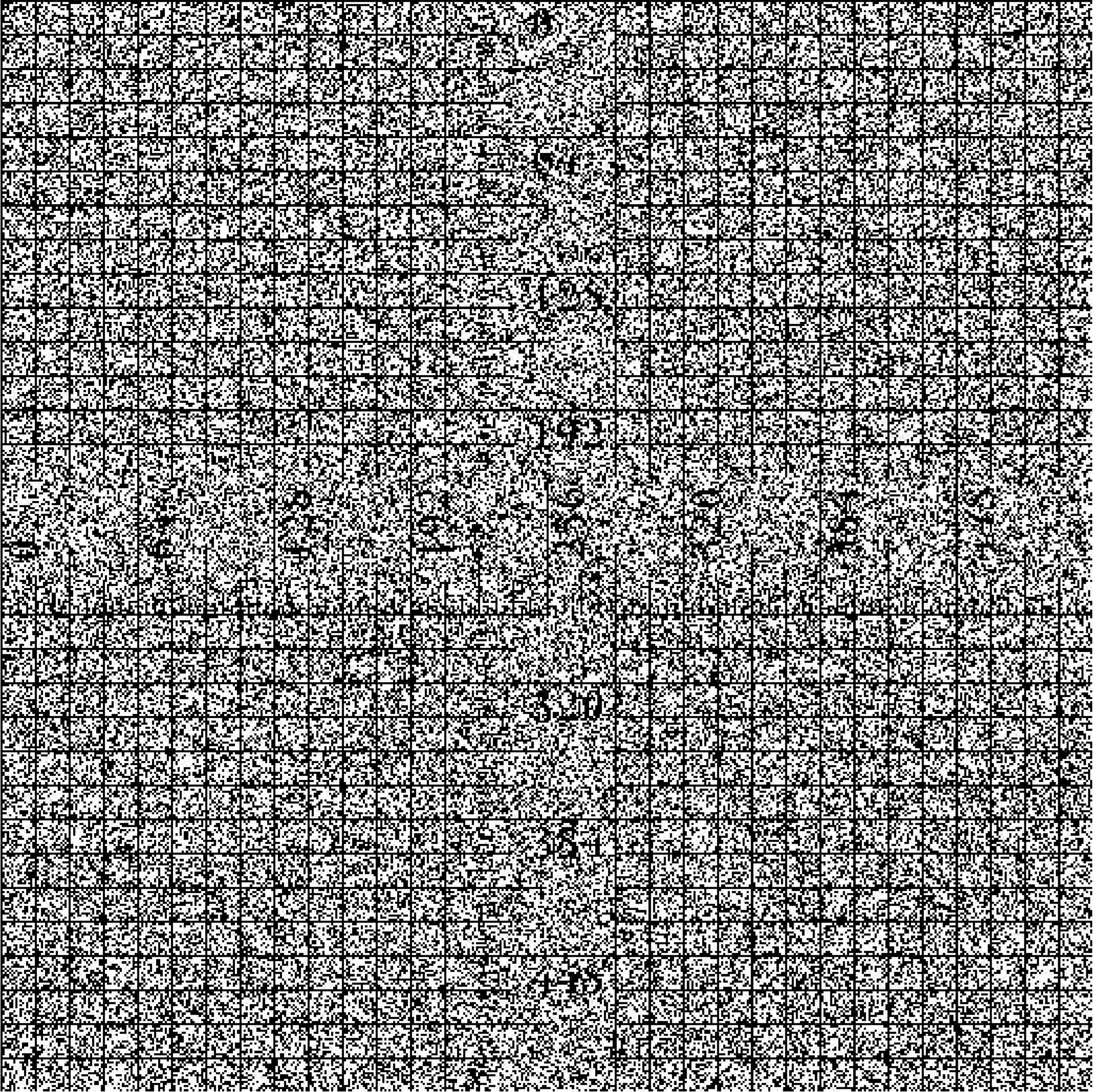}\vspace{1pt}			
				\includegraphics[width=\linewidth]{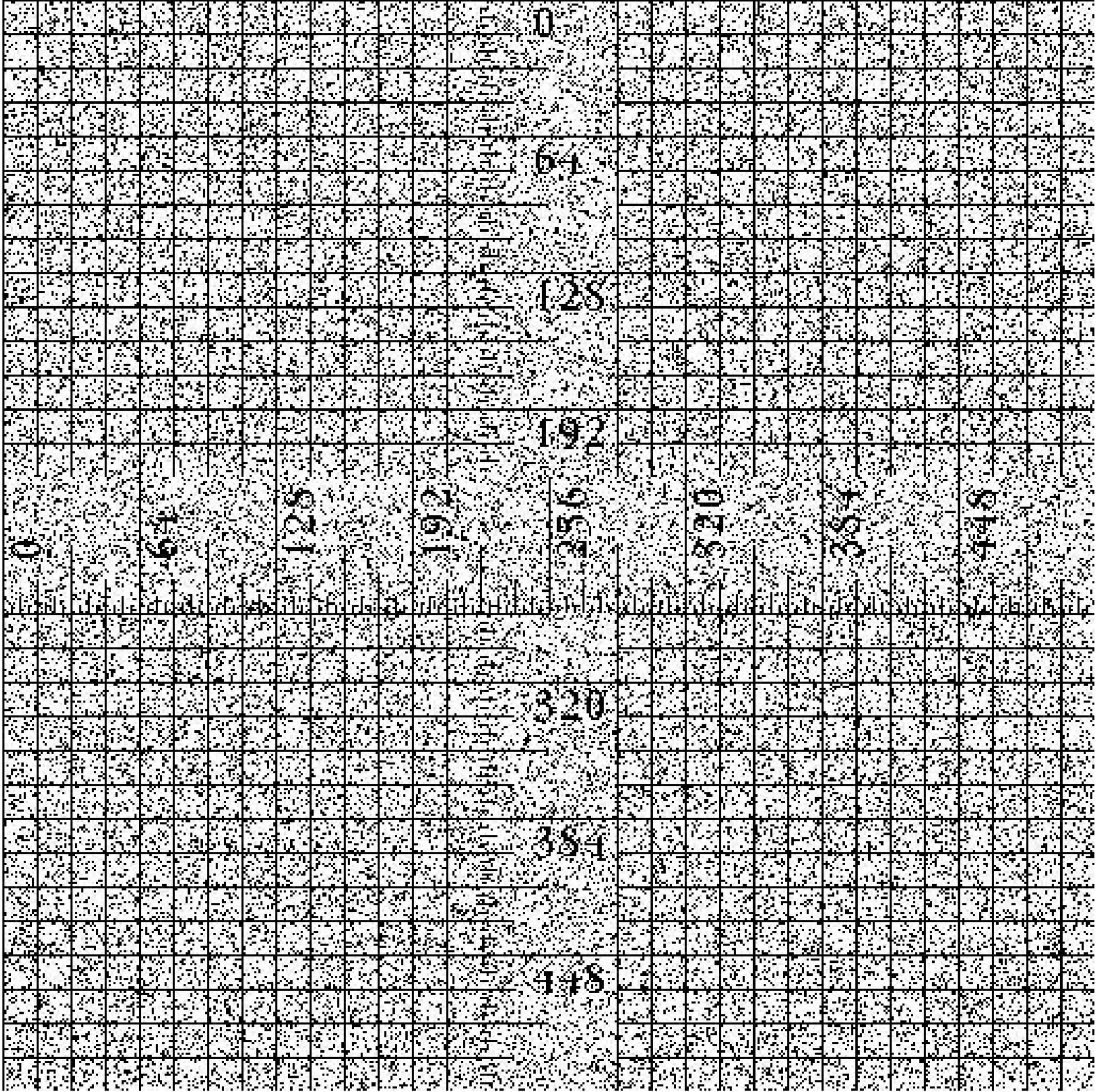}
			\end{minipage}
		}
		\hfill
		\subfloat[GIMSPG]{
			\begin{minipage}[b]{0.14\linewidth}
				\centering				
				\includegraphics[width=\linewidth]{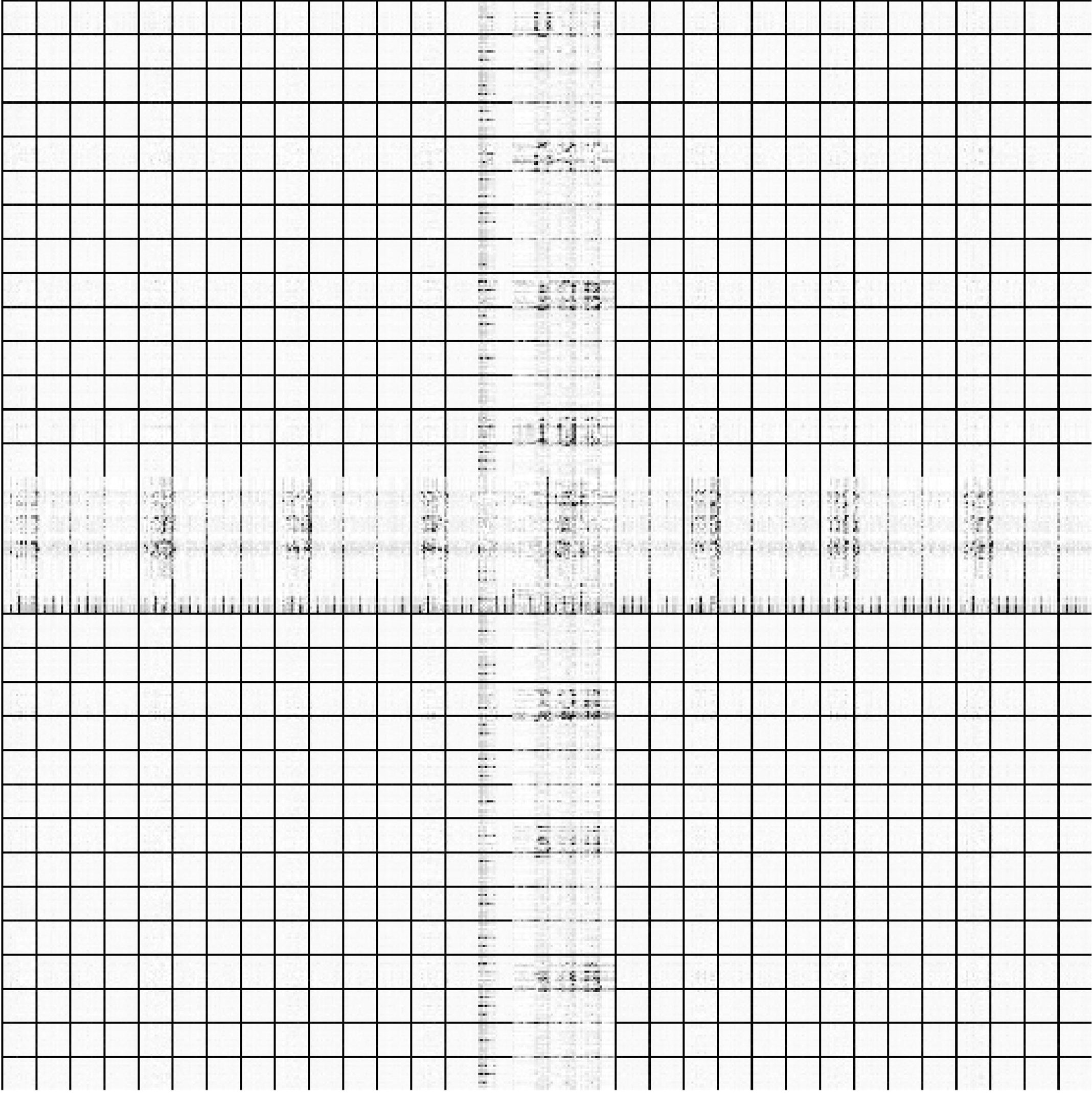}\vspace{1pt}
				\includegraphics[width=\linewidth]{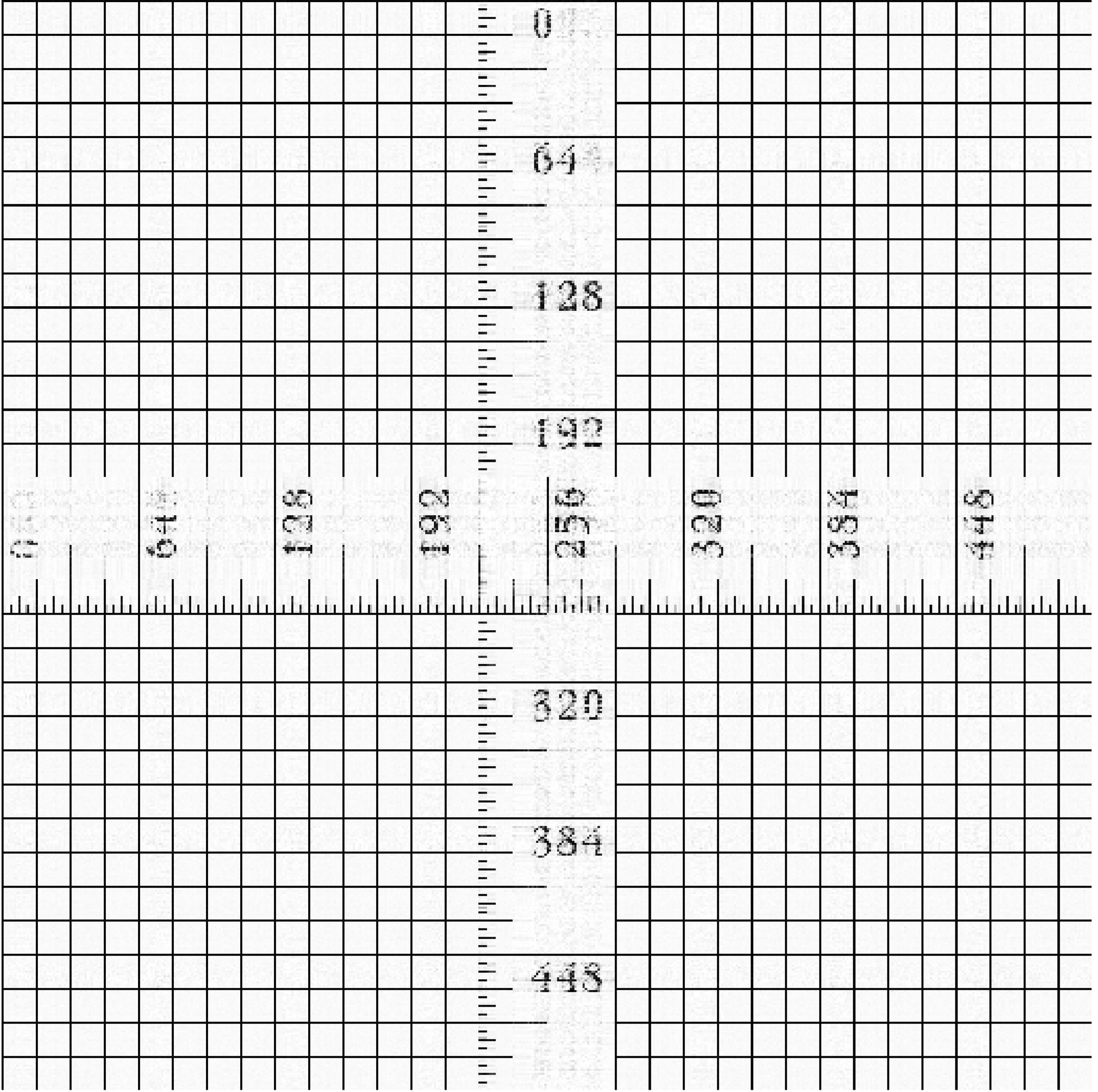}\vspace{1pt}
				\includegraphics[width=\linewidth]{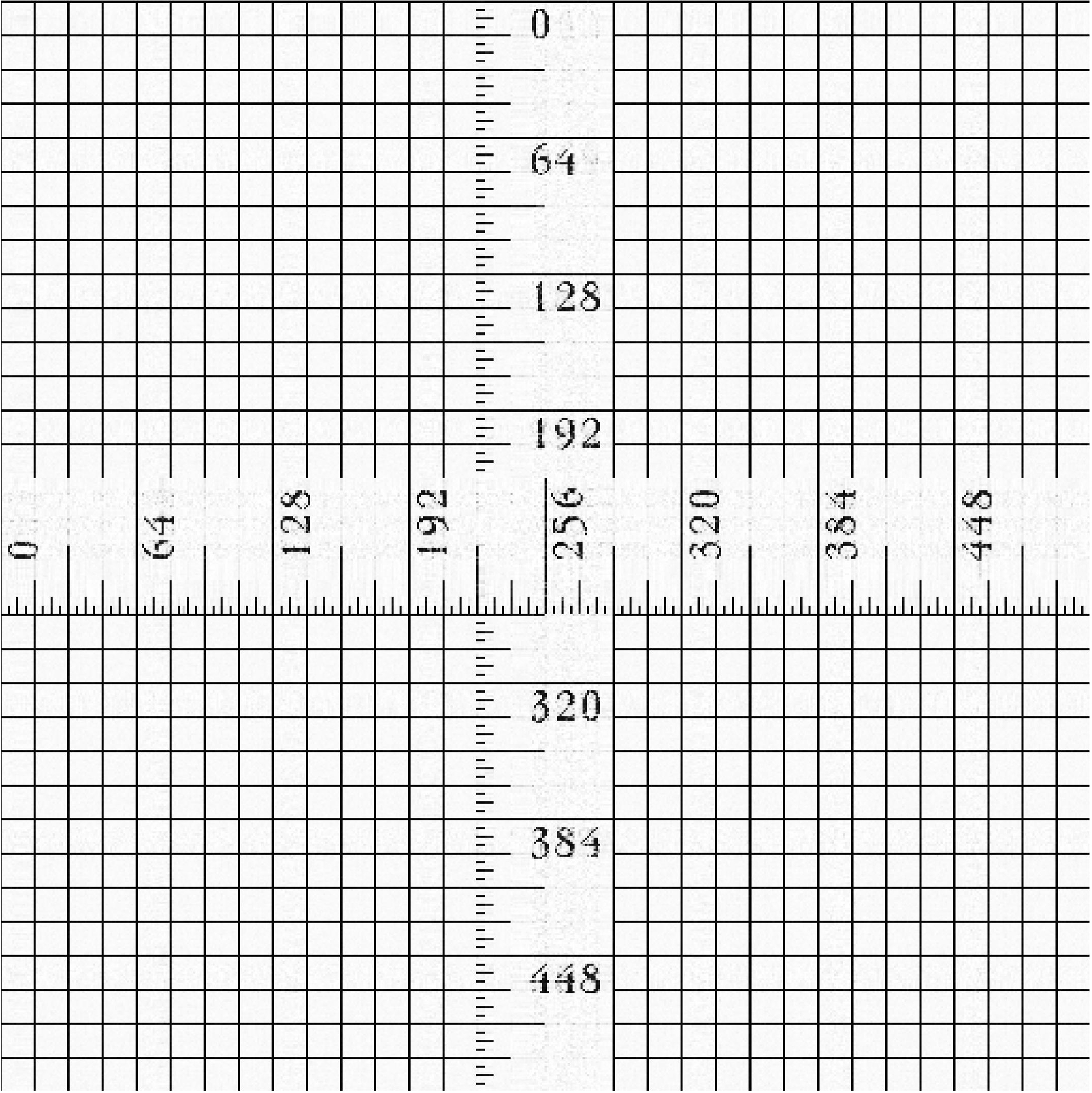}
			\end{minipage}
		}
		\hfill
		\subfloat[MSPG]{
		\begin{minipage}[b]{0.14\linewidth}
			\centering				
			\includegraphics[width=\linewidth]{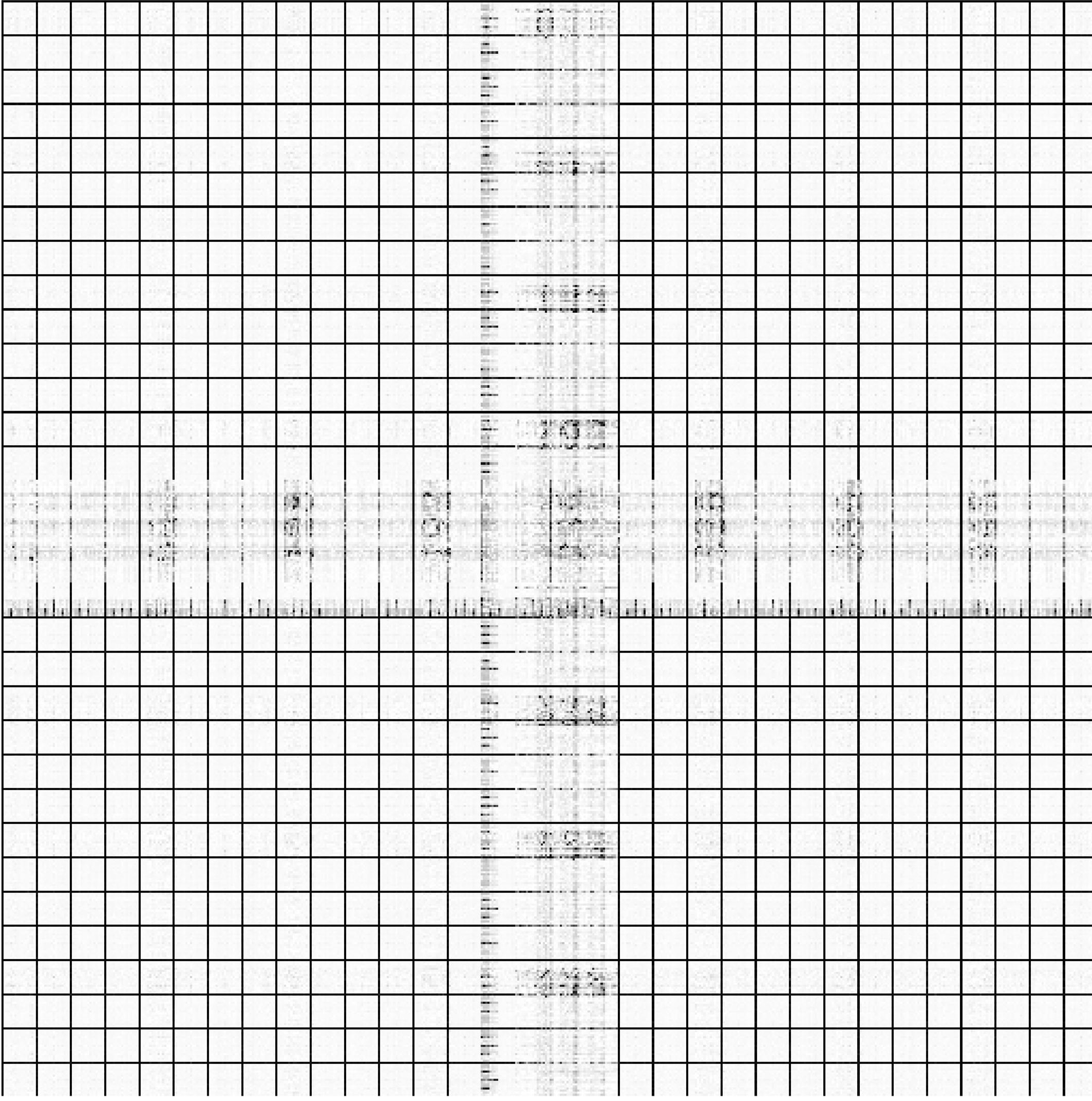}\vspace{1pt}
			\includegraphics[width=\linewidth]{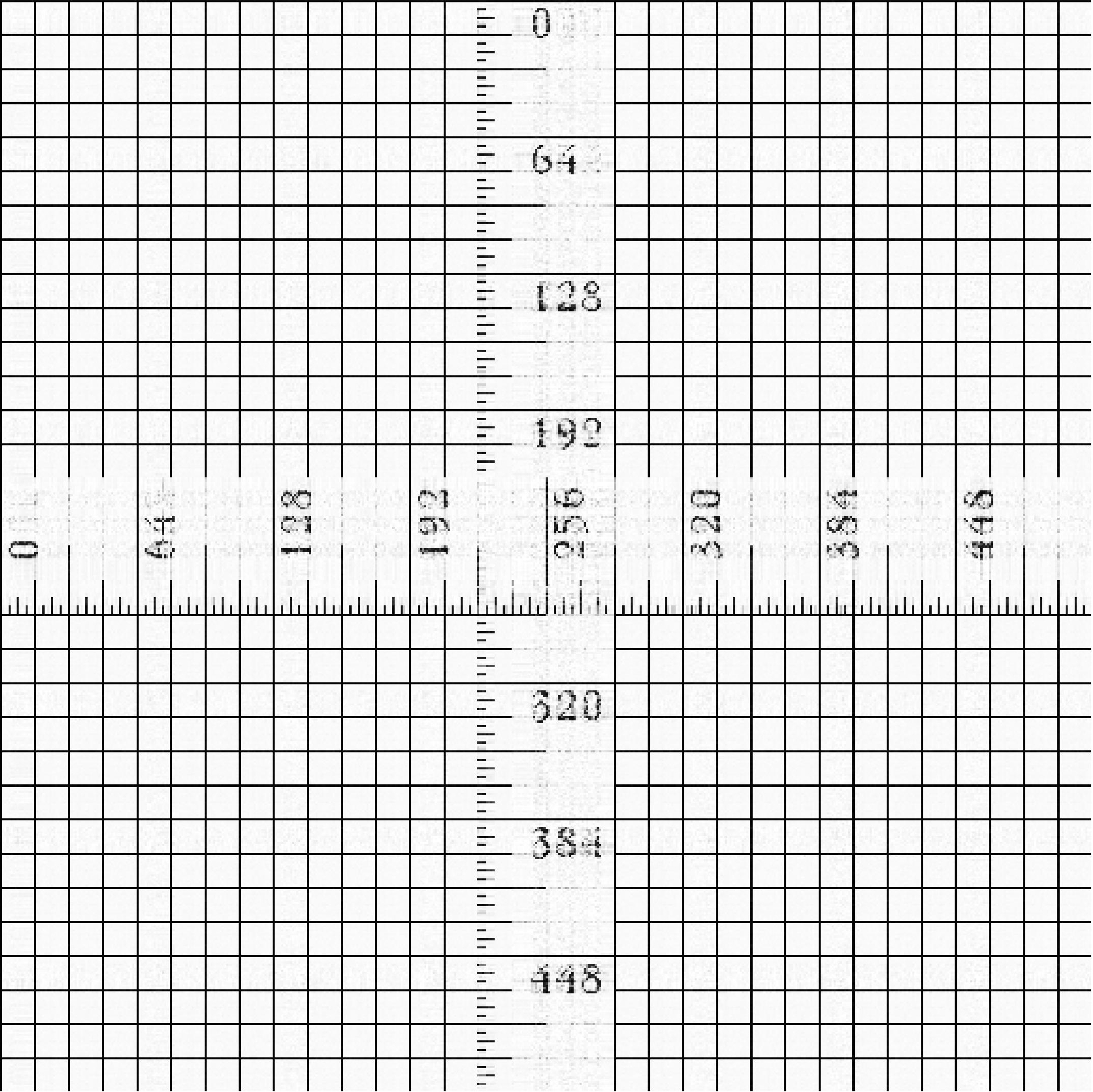}\vspace{1pt}
			\includegraphics[width=\linewidth]{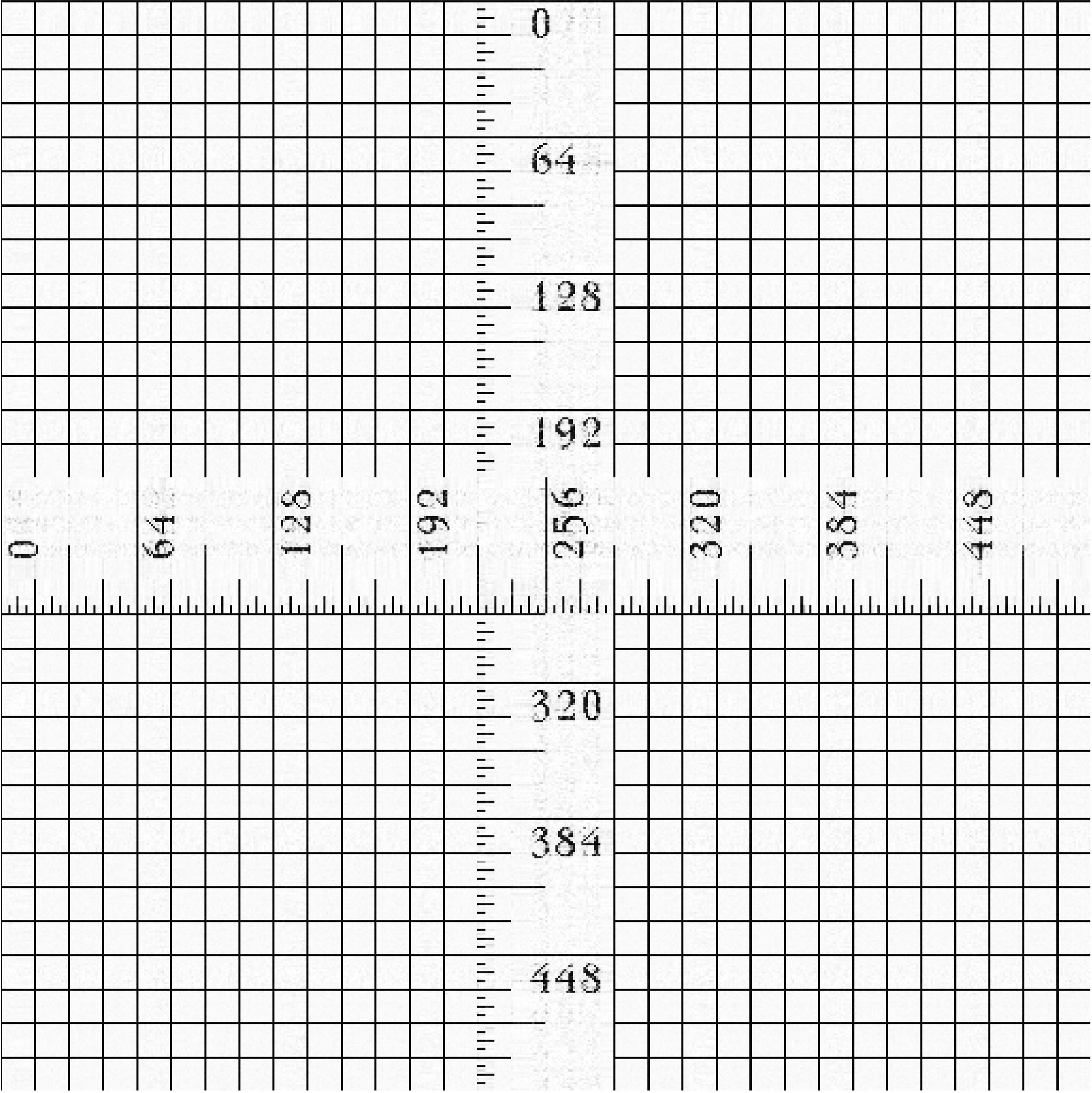}
		\end{minipage}
	}
		\hfill
			\subfloat[VBMFL1]{
			\begin{minipage}[b]{0.14\linewidth}
				\centering				
				\includegraphics[width=\linewidth]{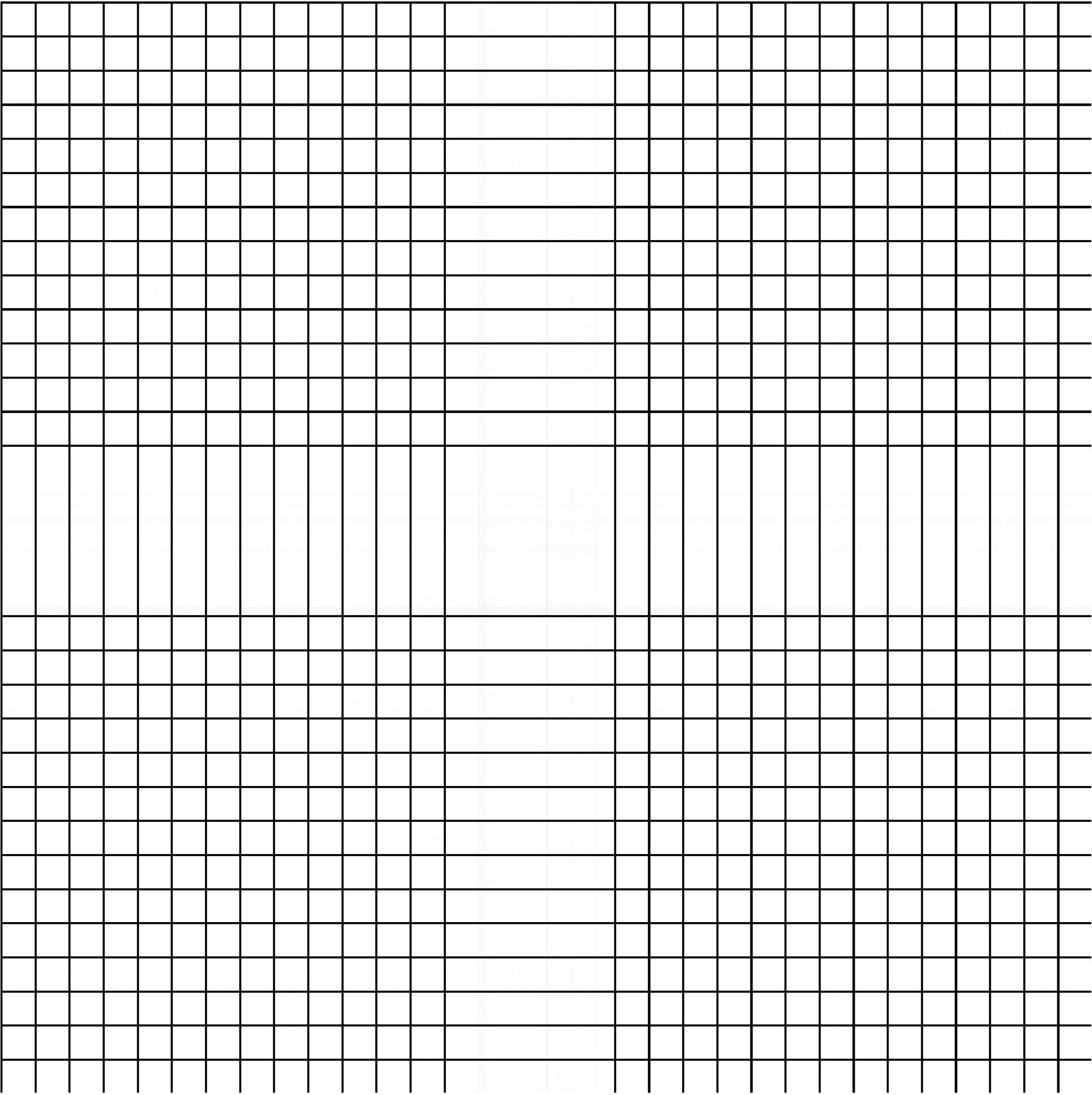}\vspace{1pt}
				\includegraphics[width=\linewidth]{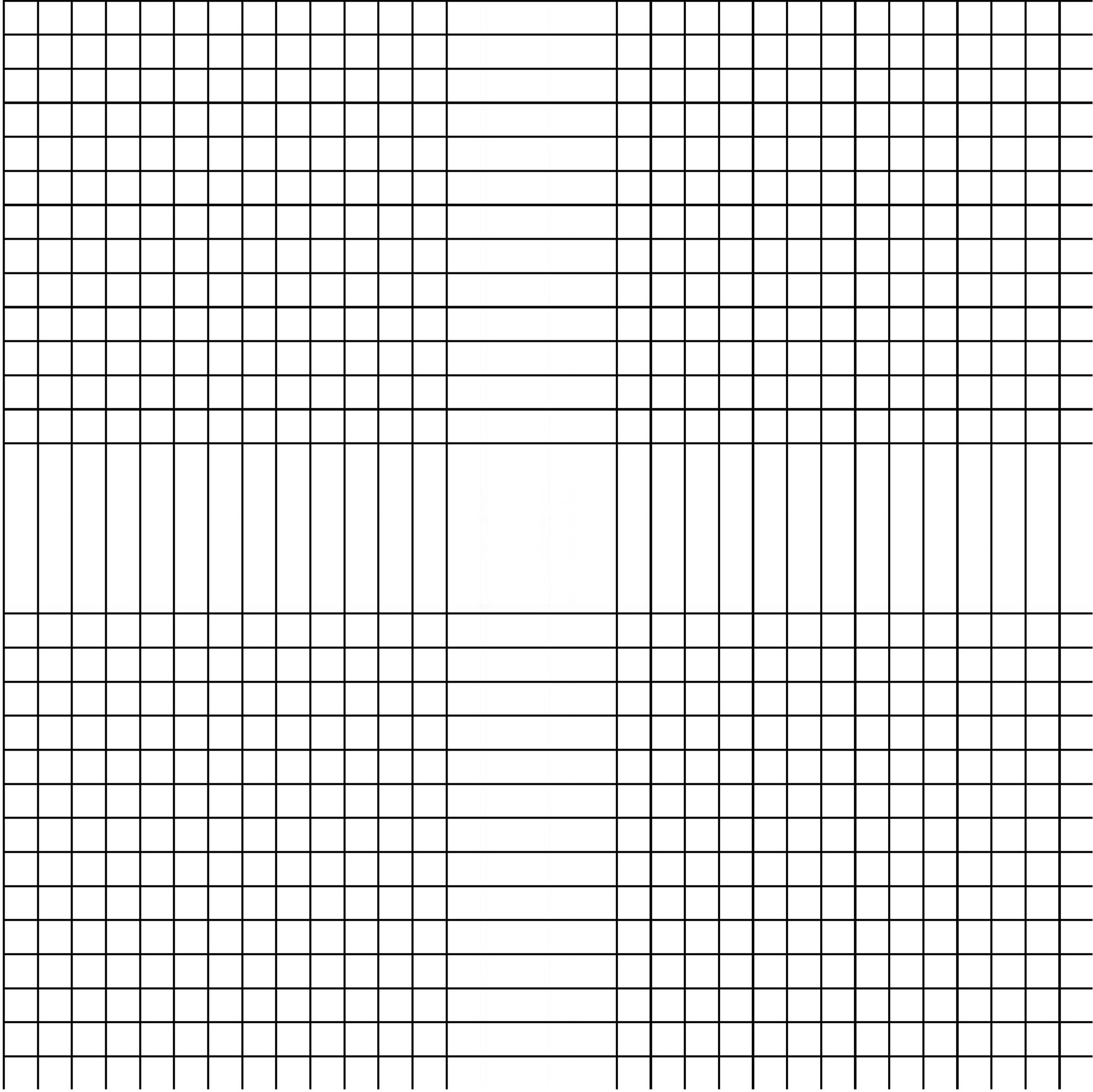}\vspace{1pt}
				\includegraphics[width=\linewidth]{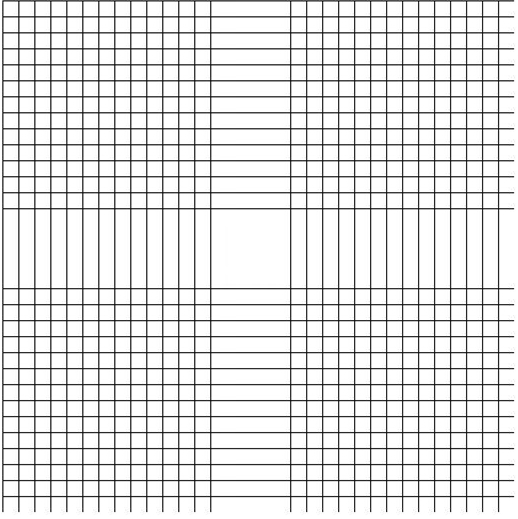}
			\end{minipage}
		}
			\hfill
		\subfloat[FPCA]{
			\begin{minipage}[b]{0.14\linewidth}
				\centering				
				\includegraphics[width=\linewidth]{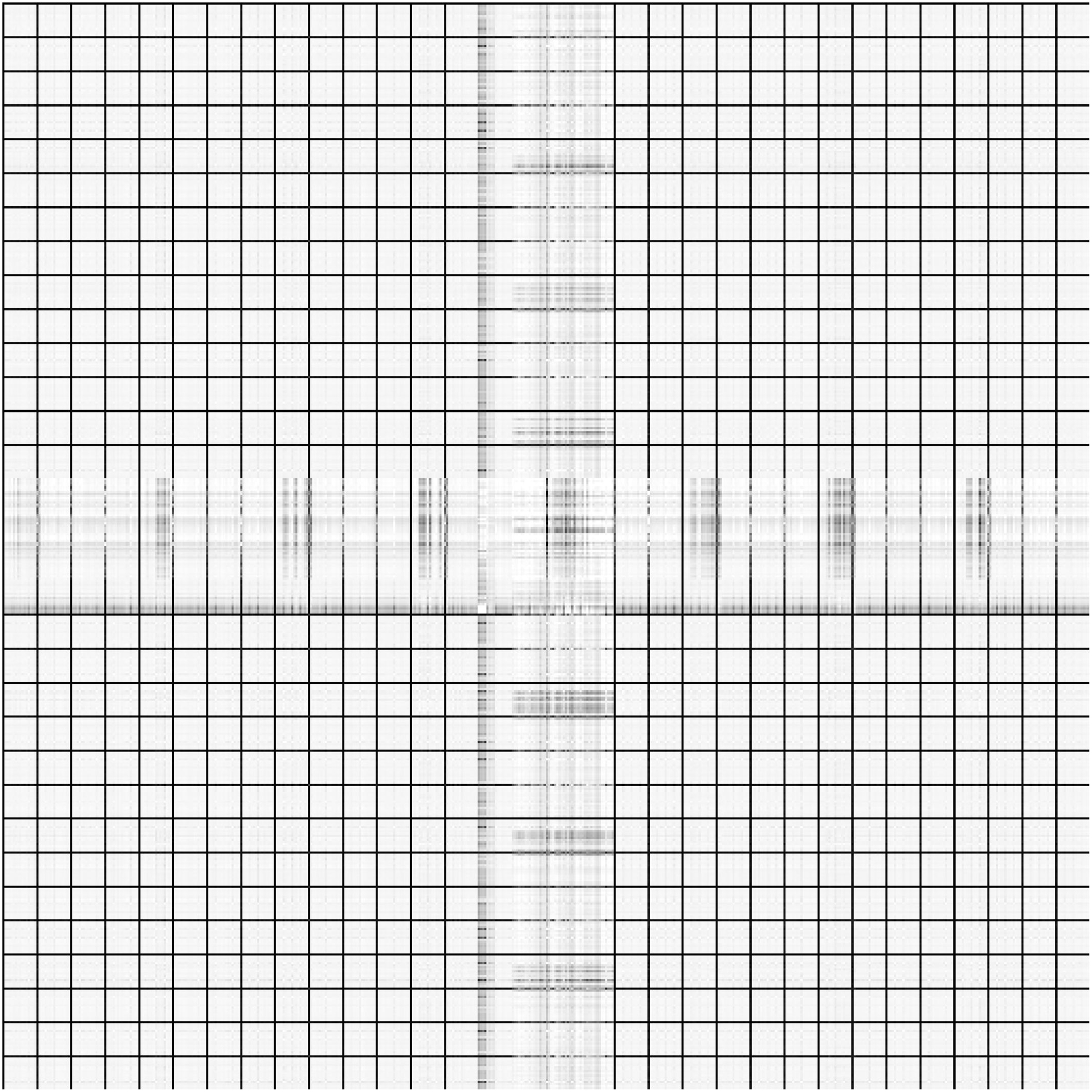}\vspace{1pt}
				\includegraphics[width=\linewidth]{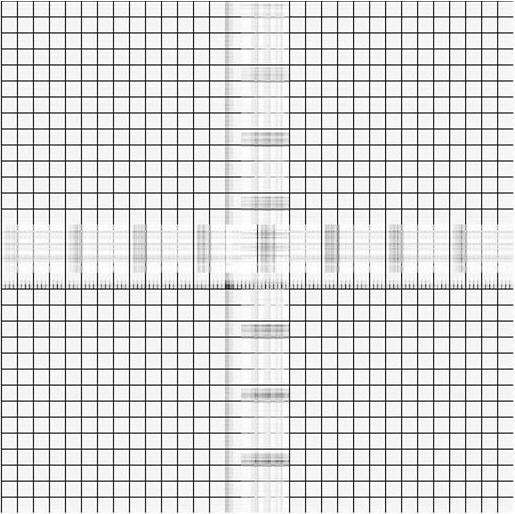}\vspace{1pt}
				\includegraphics[width=\linewidth]{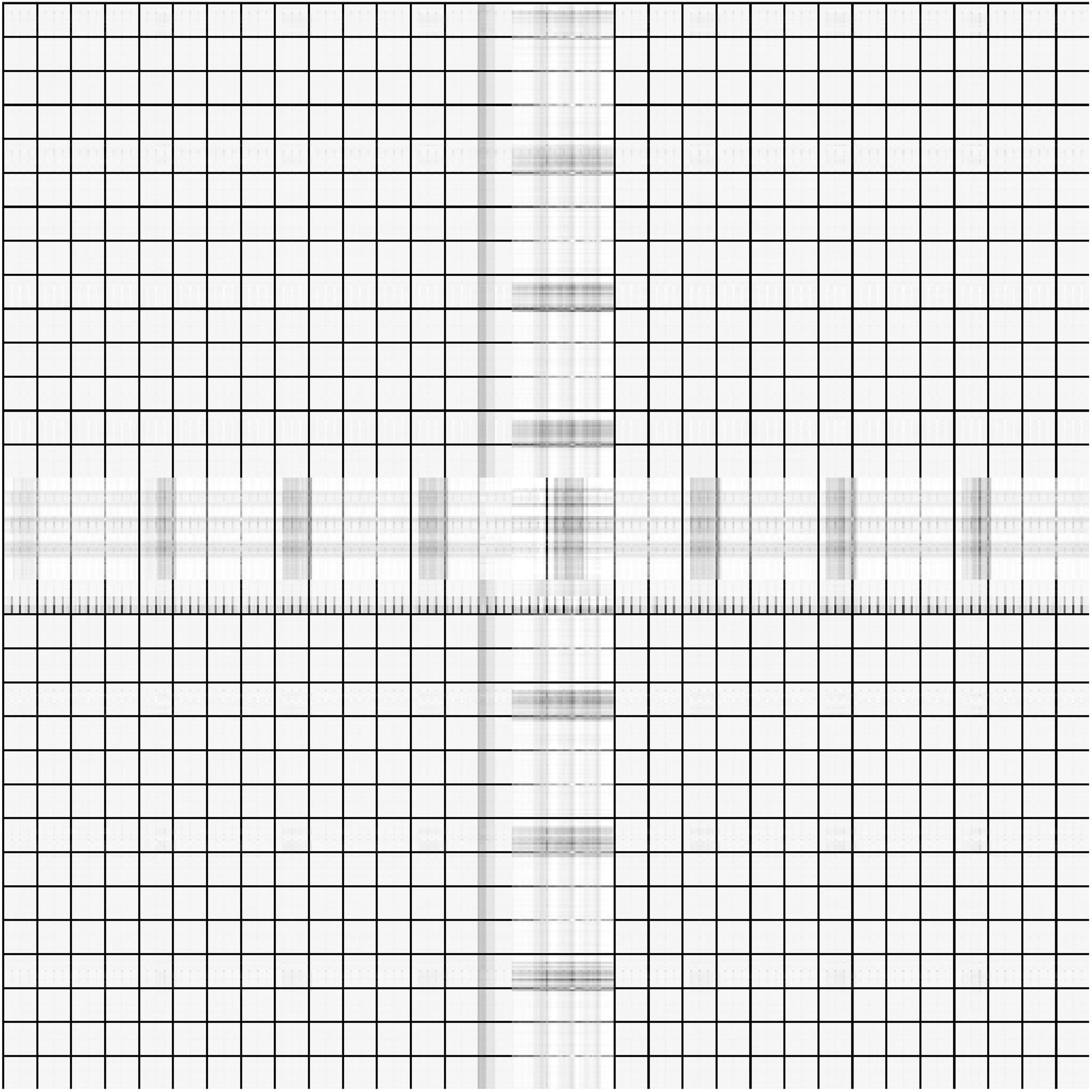}
			\end{minipage}
		}
			\hfill
		\subfloat[SVT]{
		\begin{minipage}[b]{0.14\linewidth}
			\centering				
			\includegraphics[width=\linewidth]{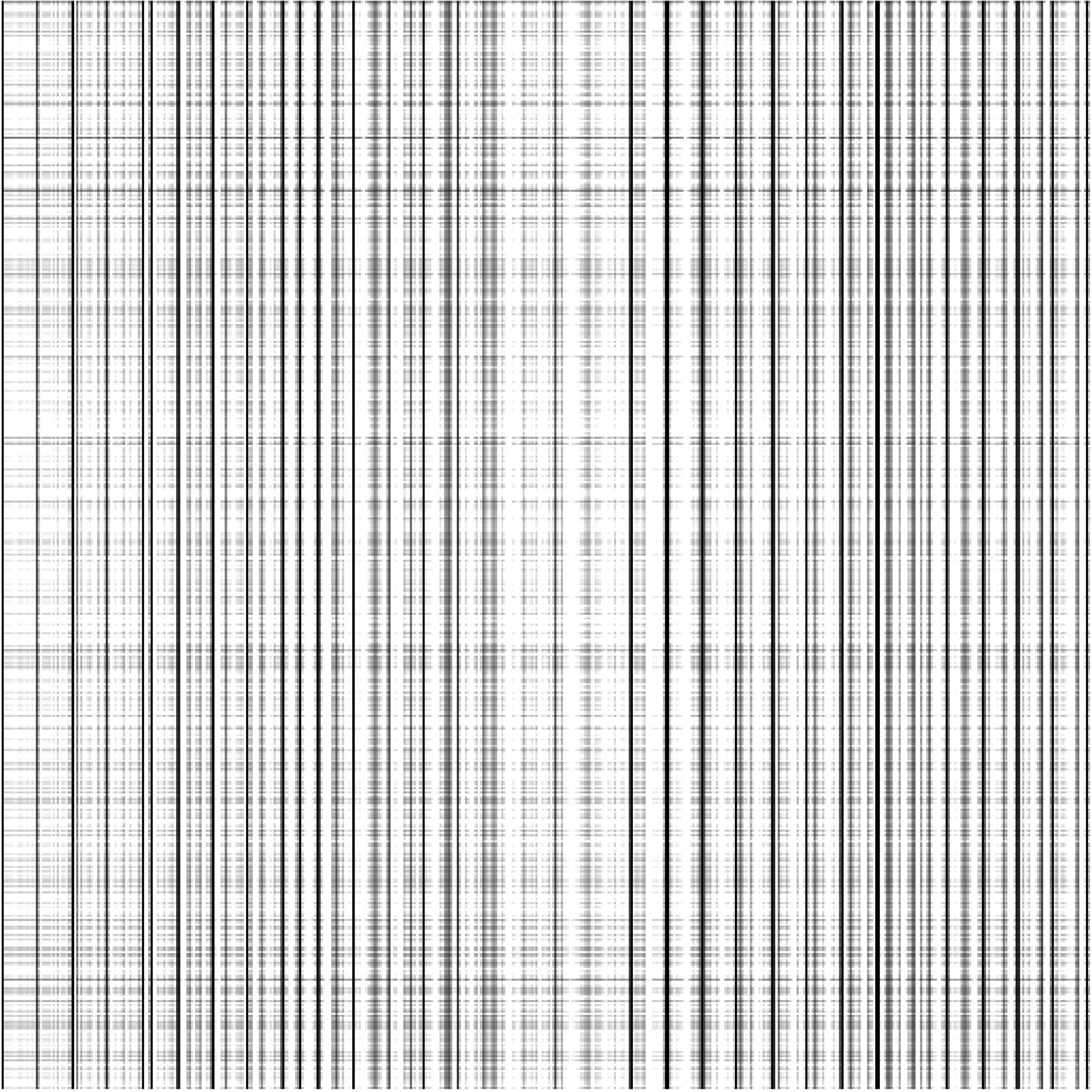}\vspace{1pt}
			\includegraphics[width=\linewidth]{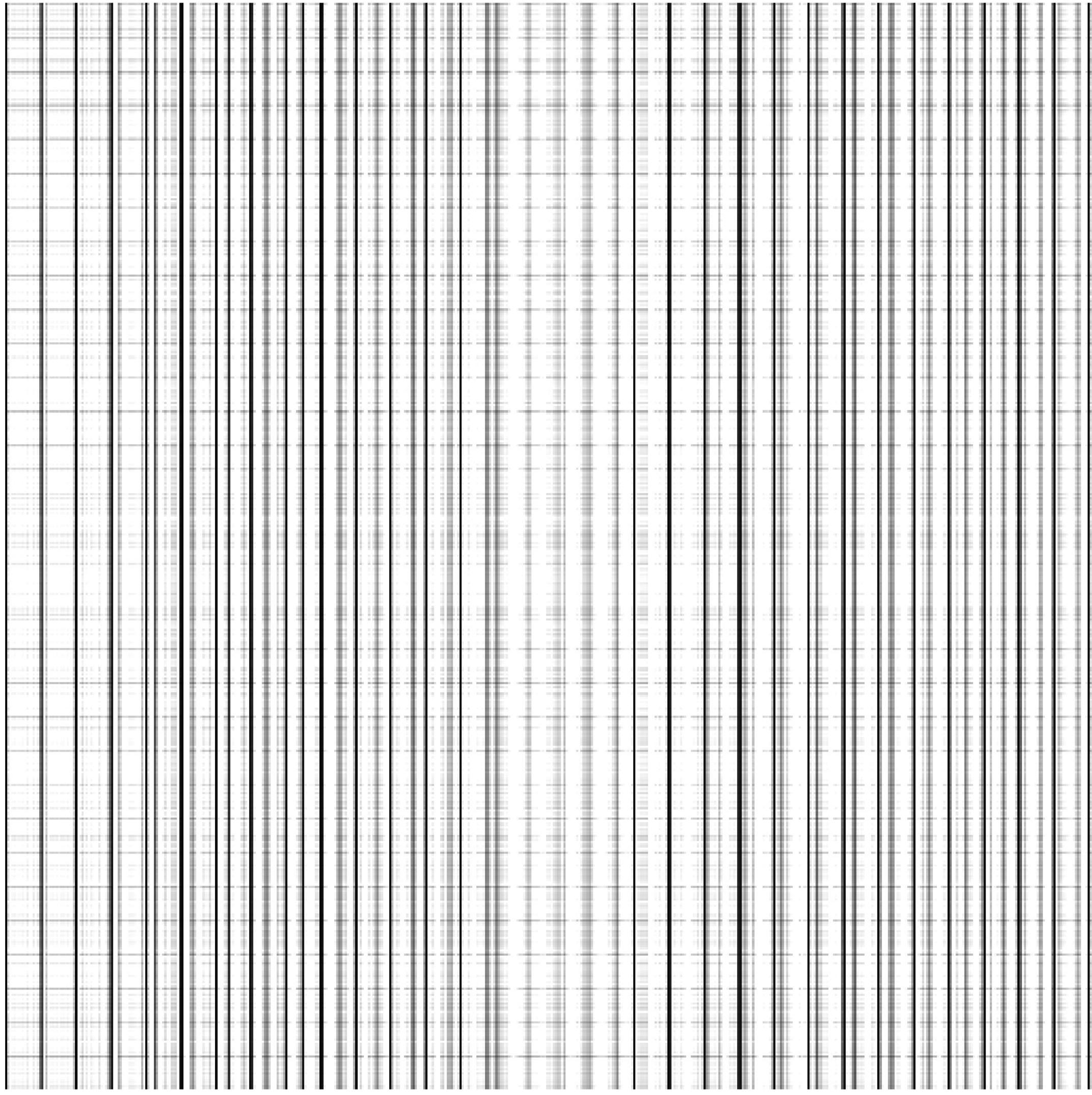}\vspace{1pt}
			\includegraphics[width=\linewidth]{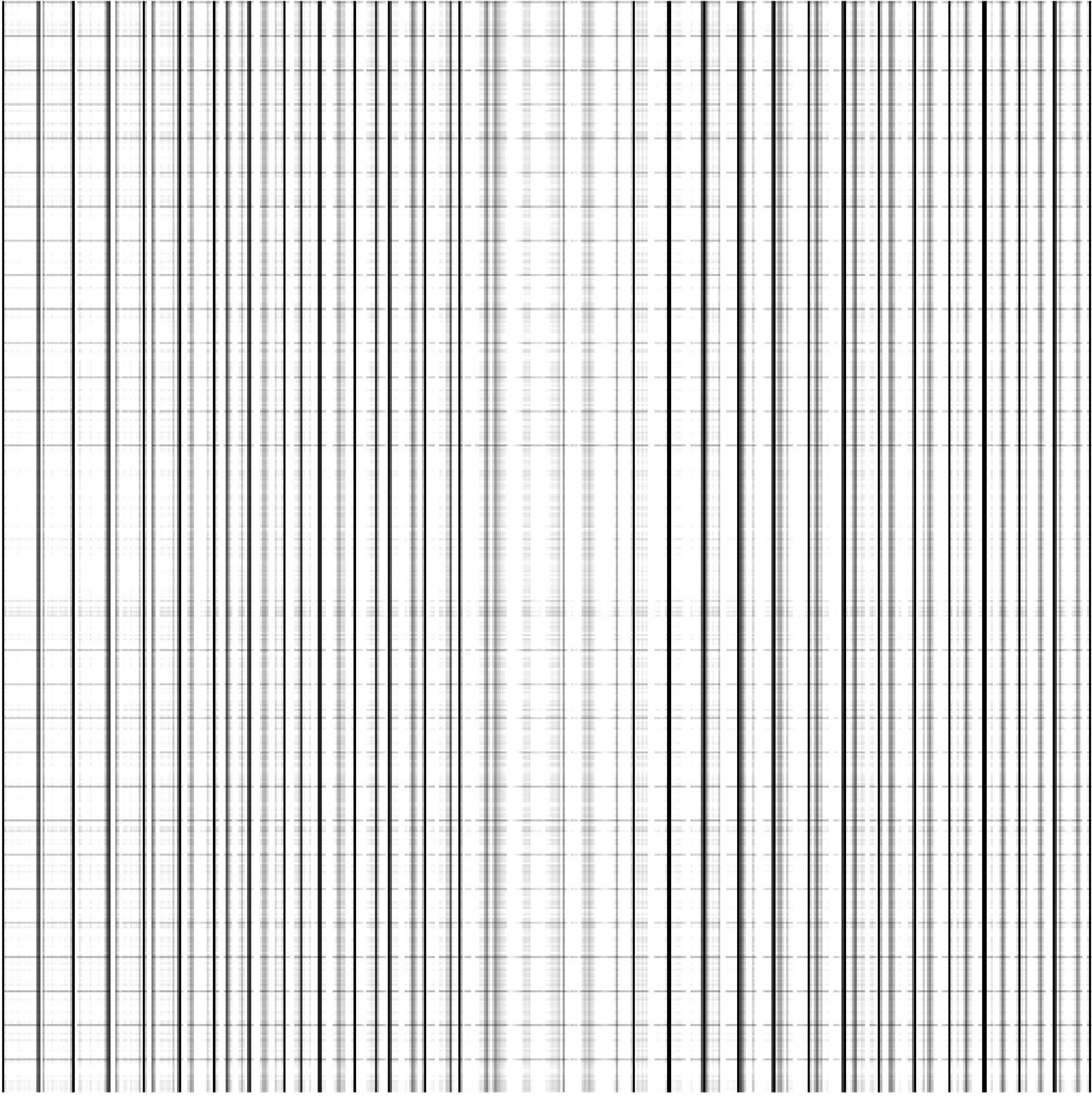}
		\end{minipage}
	}		
	\end{minipage}
	\vfill
	\caption{Results of ``Ruler'' recovery. The first column is the sample images with $sr = 0.2, \,0.6,\, 0.8.$ The rest columns are the recovery images by the corresponding algorithms}
	\label{fig5}
\end{figure*}

\begin{sidewaystable}
\sidewaystablefn%
\begin{center}
\begin{minipage}{\textheight}
\caption{Numerical results on  the  image data for $sr=0.2,\, sr=0.6,\,sr=0.8$}\label{tab3}
\setlength{\tabcolsep}{0.8mm}{
\begin{tabular*}{\textheight}{@{\extracolsep{\fill}}lccccccccccccc@{\extracolsep{\fill}}}
\toprule%
\multirow{2}{*}{Image}&\multirow{2}{*}{sr}& \multicolumn{5}{@{}c@{}}{PSNR}& \multicolumn{5}{@{}c@{}}{T}&\multicolumn{2}{@{}c@{}}{$\mu$} \\\cmidrule{3-7}\cmidrule{8-12}\cmidrule{13-14}%
& &GIMSPG&MSPG& VBMFL1& FPCA&SVT & GIMSPG &MSPG& VBMFL1& FPCA&SVT&MSPGE &MSPG  \\
 					\midrule
 					chart   &0.2&18.04&17.16&15.96 &15.60	&9.16
 					&9.78 &18.10&18.67&0.53	&2.72
 					&0.022&0.062\\  		
 					&0.6&29.13&29.01&19.80 &18.80	&10.49
 					&2.19 &12.59&33.10&14.16	&2.78
 					&0.035&0.131\\
 					
 					&0.8&30.84 &30.18&20.63&19.50&10.65
 					&1.55&3.16&47.94&13.64 &3.58
 					&0.107&0.234
 					\\
 					ruler 	&0.2&21.17&20.74&20.10&19.90&5.21
 					&25.45&55.76&5.23&26.25&5.42
 					&0.020&0.105
 					\\
 					&0.6&27.02&27.04&19.80 &18.80	&5.22
 					&11.50 &12.86&19.67&54.91	&6.31
 					&0.044&0.238\\
 					
 					&0.8&30.01&30.39&20.50&20.20&5.24
 					&7.11&14.83&49.91&49.98&7.09
 					&0.093&0.273
 					\\  	  	
\botrule
\end{tabular*}
}
\end{minipage}
\begin{minipage}{\textheight}
\caption{Numerical results of the MRI volume dataset for $sr=0.6,\,sr=0.8$}\label{tab4}
\setlength{\tabcolsep}{0.8mm}{
\begin{tabular*}{\textheight}{@{\extracolsep{\fill}}lccccccccccccc@{\extracolsep{\fill}}}
\toprule%
\multirow{2}{*}{Image}&\multirow{2}{*}{sr}& \multicolumn{5}{@{}c@{}}{PSNR}& \multicolumn{5}{@{}c@{}}{T}&\multicolumn{2}{@{}c@{}}{$\mu$} \\\cmidrule{3-7}\cmidrule{8-12}\cmidrule{13-14}%
	& &GIMSPG&MSPG& VBMFL1& FPCA&SVT & GIMSPG &MSPG& VBMFL1& FPCA&SVT&MSPGE &MSPG  \\
		\midrule
		MRI&0.6&29.13&29.01&29.58 &26.70	&13.56
		&2.19 &12.59&32.91&10.67&2.22
		&0.035&0.131\\  		
		
		&0.8&33.06&32.99&30.33&27.20&13.59
		&1.79&10.57&26.61&11.58 &2.44
		&0.029&0.137
 					\\  	  	
\botrule
\end{tabular*}
}
\end{minipage}
\end{center}
\end{sidewaystable}

\subsection{MRI Volume Dataset}\label{4.3}

  In this subsection, we perform the experiments on the MRI image which is of size
 $217 \times 181$ with 181 slices and we select the 38th slice  for the
 experiments.  The two   sampling ratios
 $sr=0.6,\,sr=0.8$ are considered.
   The parameters in GMM noise are set as $\sigma^2_A=0.0001$, $\sigma^2_B=0.1$ and $c=0.1$.

 From Figure \ref{fig6} and Table \ref{tab4}, we see that the recoverability of VBMFL1 is relatively better but it spends more time.  Particularly,  we
  notice that the PSNR of GIMSPG algorithm is higher than other algorithms except the VBMFL1 in Table \ref{tab4}. Besides, the used time $T$ of GIMSPG is the least one. Compared with MSPG, the last value $\mu$ of smoothing facor of GIMSPG algorithm is lower.
\begin{figure*}[!ht]
	\centering
	\begin{minipage}[b]{1\linewidth}
		\subfloat[Observed]{
			\begin{minipage}[b]{0.135\linewidth}
				\centering
				\includegraphics[width=\linewidth]{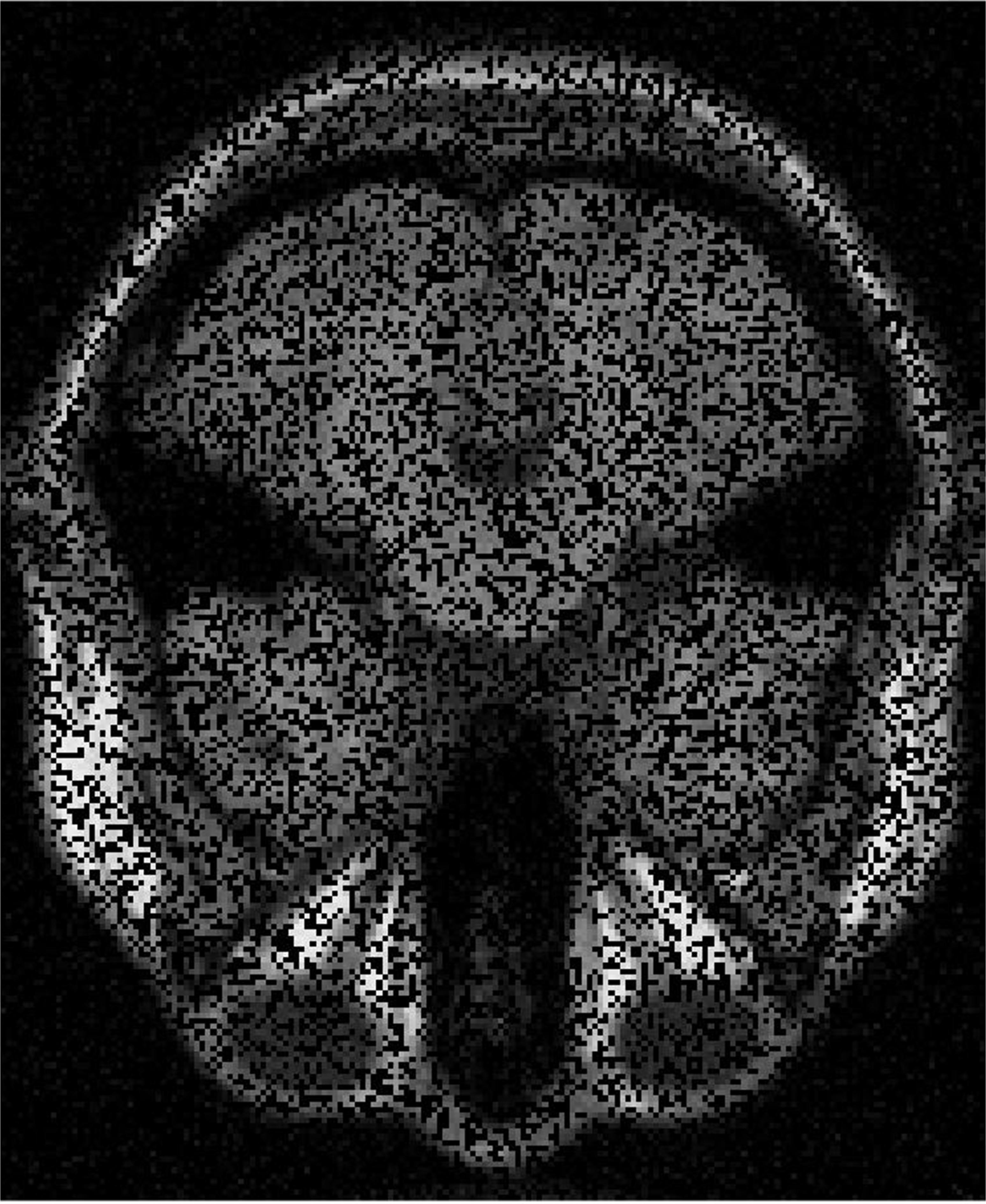}\vspace{1pt}			
				\includegraphics[width=\linewidth]{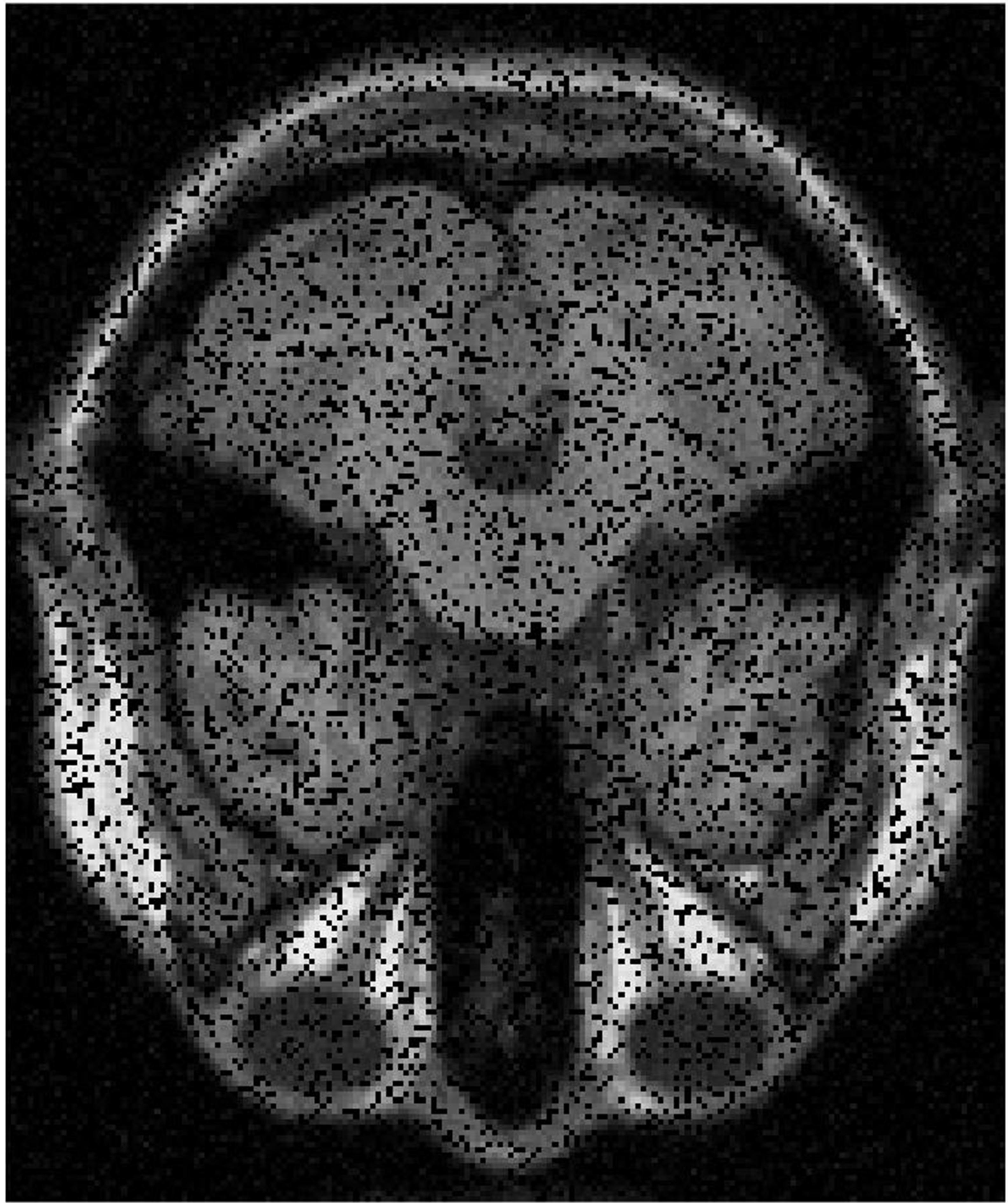}
			\end{minipage}
		}
		\hfill
		\subfloat[GIMPG]{
			\begin{minipage}[b]{0.135\linewidth}
				\centering				
				\includegraphics[width=\linewidth]{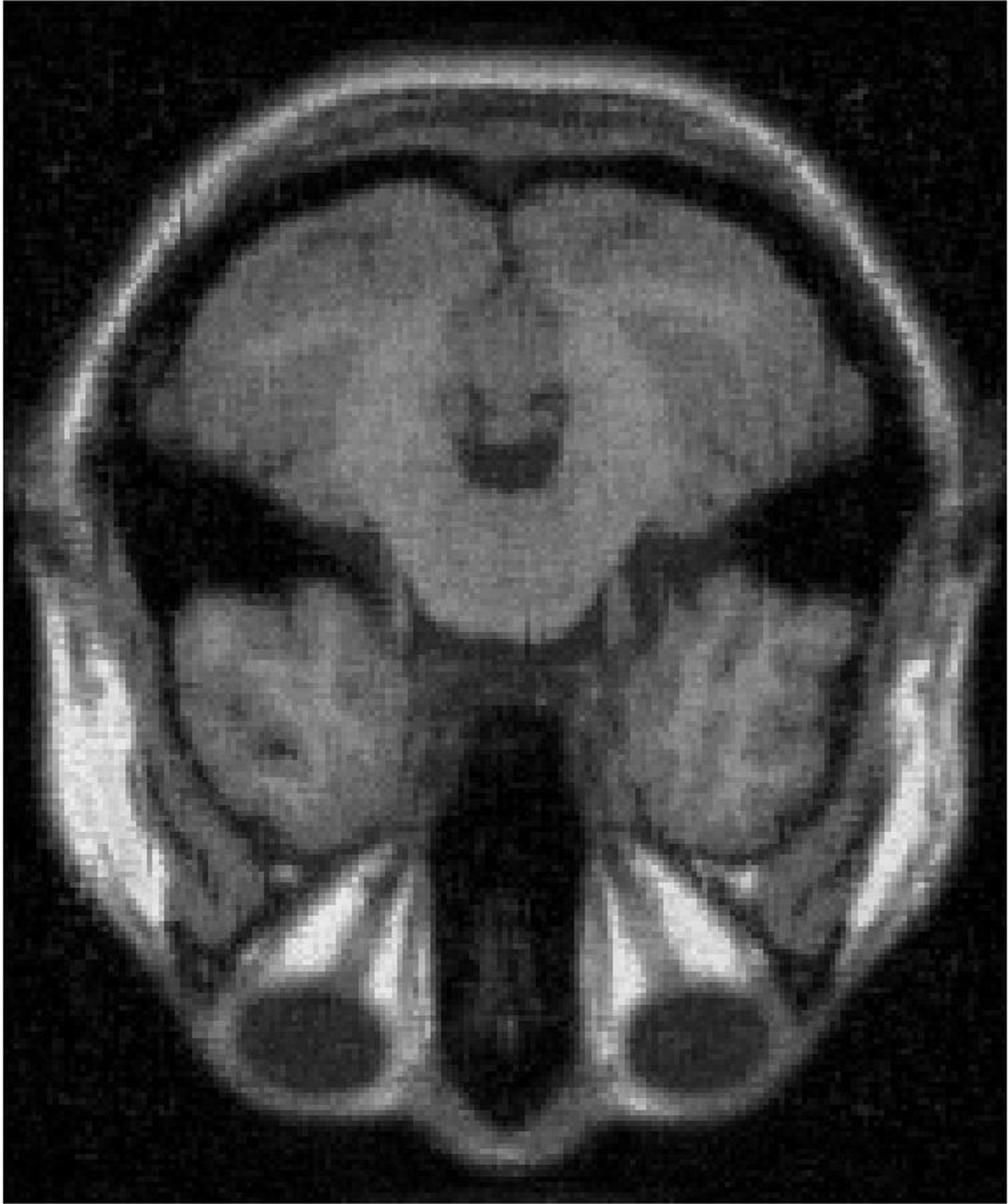}\vspace{1pt}
				\includegraphics[width=\linewidth]{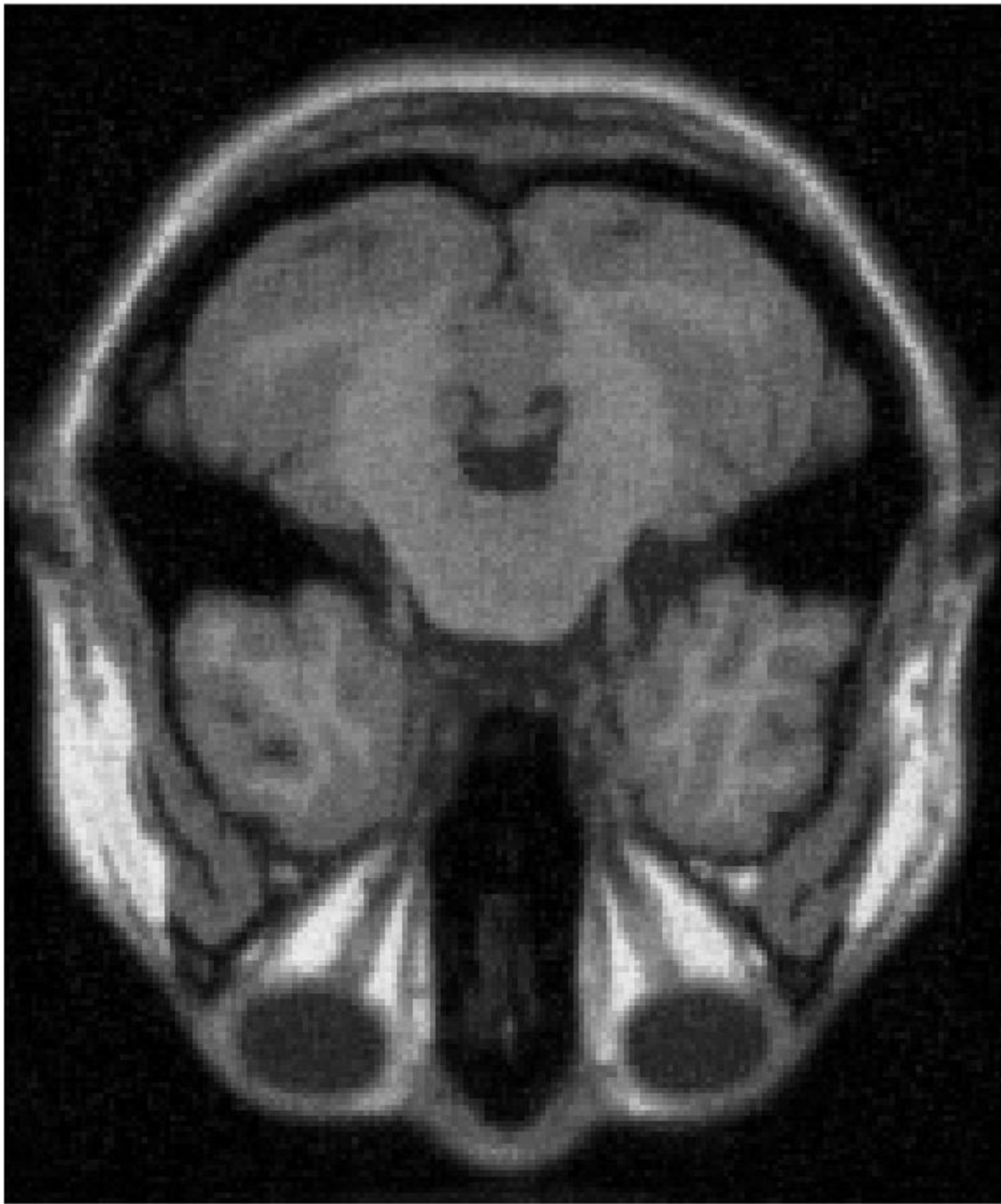}
			\end{minipage}
		}
		\hfill
		\subfloat[MSPG]{
			\begin{minipage}[b]{0.135\linewidth}
				\centering				
				\includegraphics[width=\linewidth]{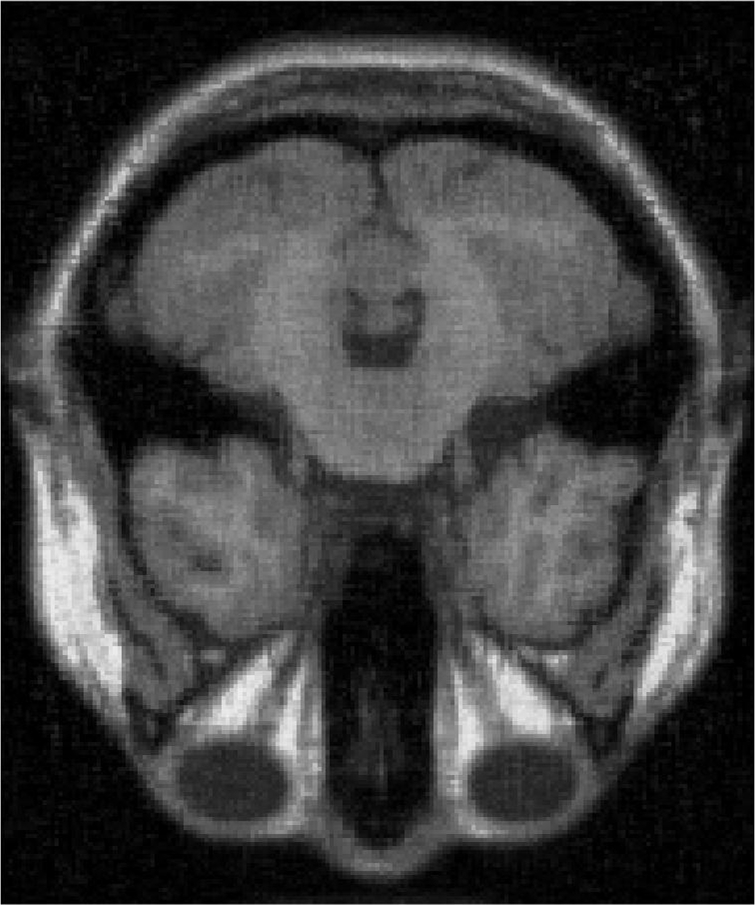}\vspace{1pt}
				\includegraphics[width=\linewidth]{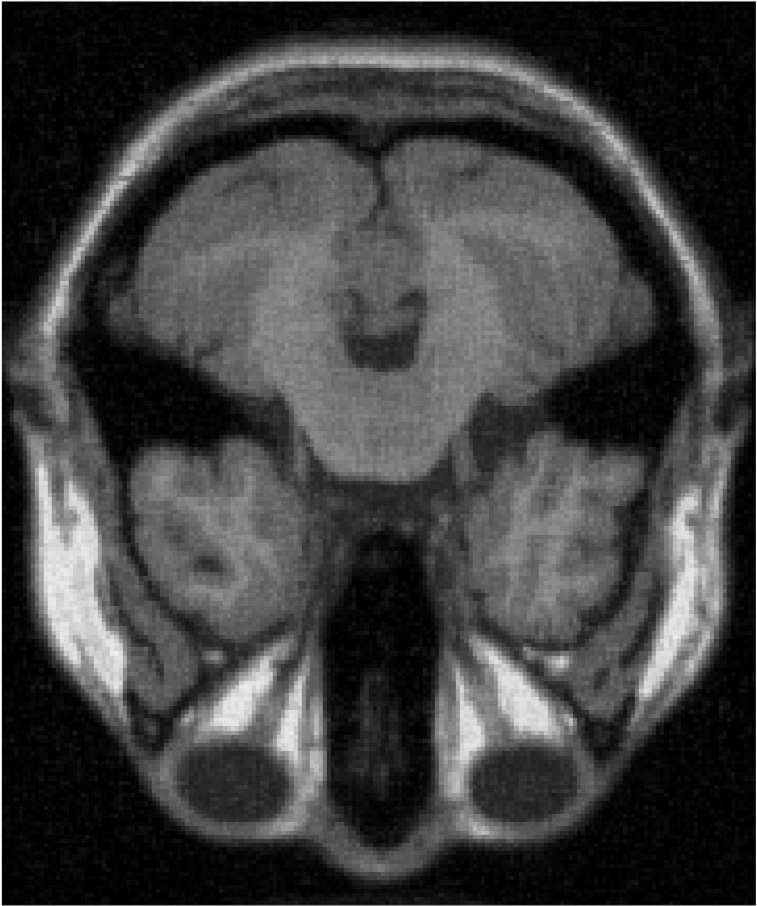}
			\end{minipage}
		}
		\hfill
		\subfloat[VBMFL1]{
			\begin{minipage}[b]{0.135\linewidth}
				\centering				
				\includegraphics[width=\linewidth]{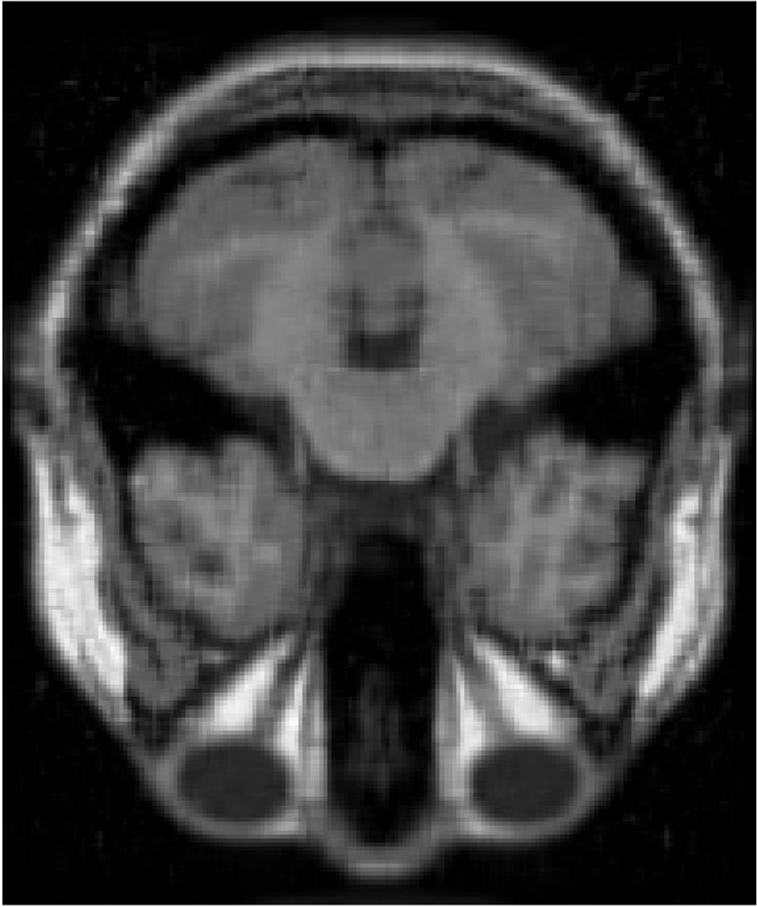}\vspace{1pt}
				\includegraphics[width=\linewidth]{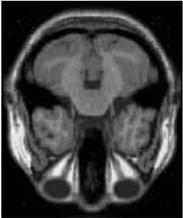}
			\end{minipage}
		}
		\hfill
		\subfloat[FPCA]{
			\begin{minipage}[b]{0.135\linewidth}
				\centering				
				\includegraphics[width=\linewidth]{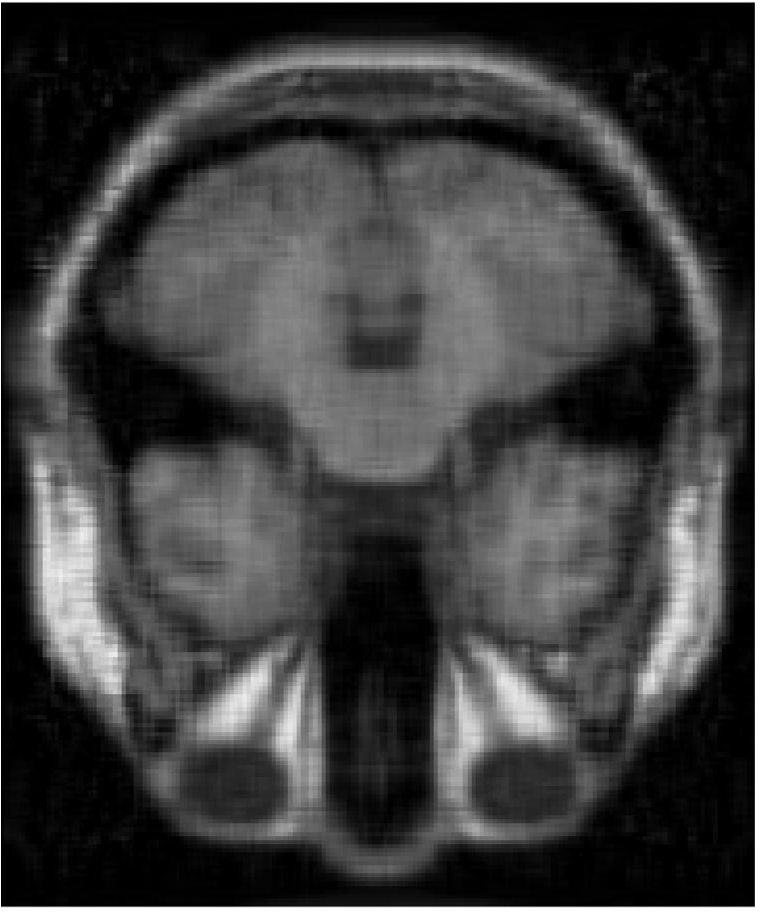}\vspace{1pt}
				\includegraphics[width=\linewidth]{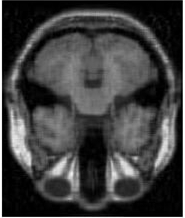}
			\end{minipage}
		}
		\hfill
		\subfloat[SVT]{
		\begin{minipage}[b]{0.135\linewidth}
			\centering				
			\includegraphics[width=\linewidth]{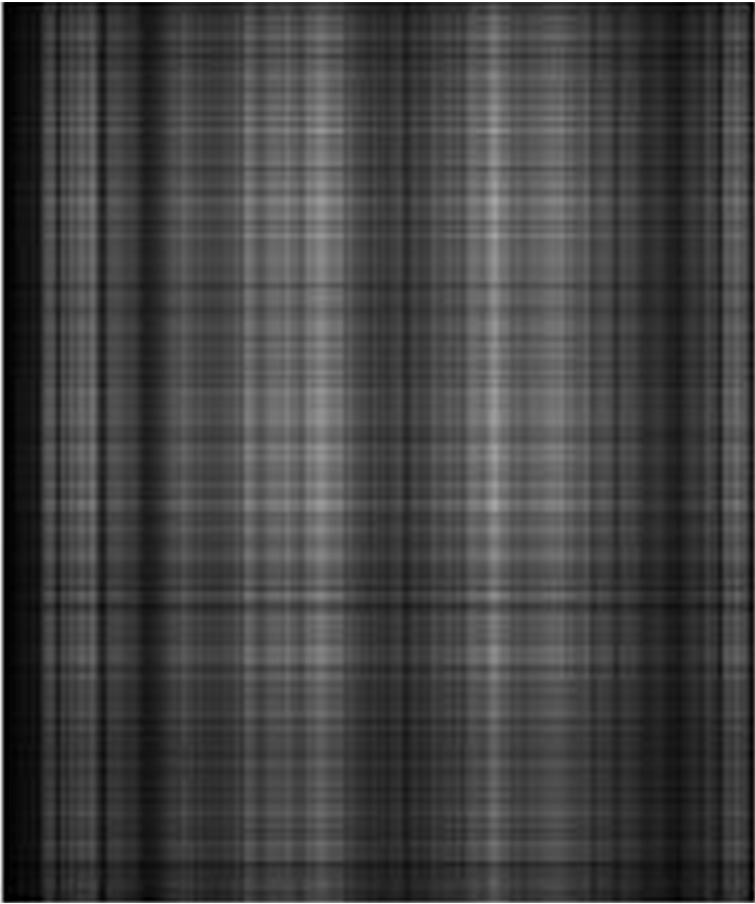}\vspace{1pt}
			\includegraphics[width=\linewidth]{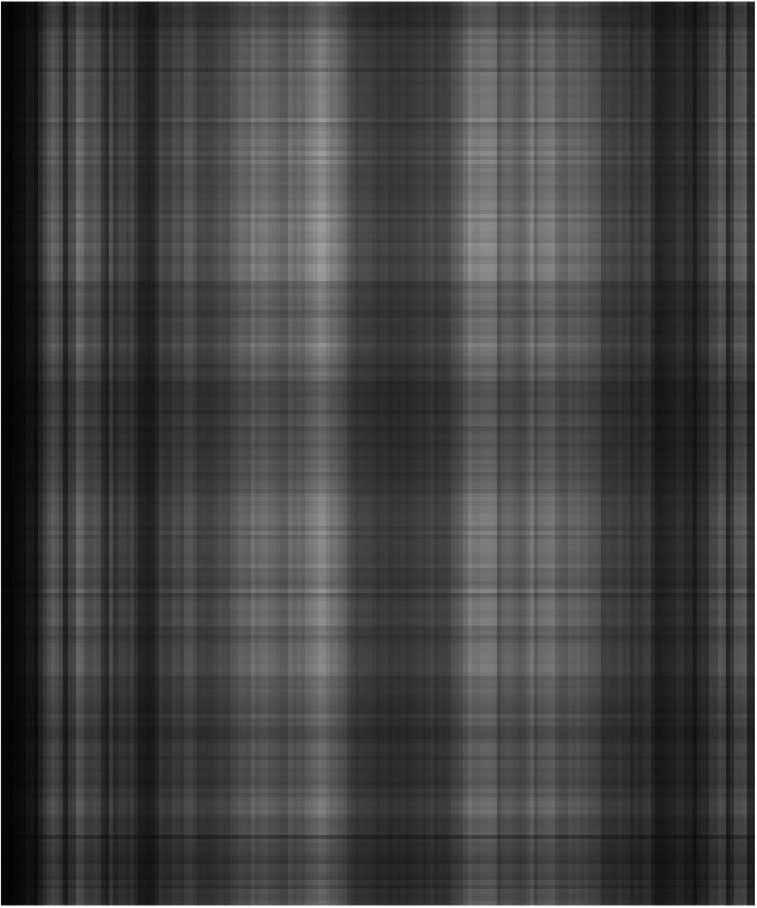}
		\end{minipage}
	}
		
	\end{minipage}
	\vfill
	\caption{ Results of  ``MRI'' recovery. The first column is the sample images with $sr =0.6,\, 0.8.$ The rest columns are the recovery images by the corresponding algorithms}
	\label{fig6}
\end{figure*}

\section{Conclusions }\label{sec5}
This paper mainly proposes the GIMSPG algorithm for the exact continuous model \cite{Yu_Q_Zhang_X} of matrix rank minimization problem and discuss its convergence under the flexible parameters condition. It is shown that the  singular values of the accumulation point have a common support set and the nonzero elements have a unified lower bound. Further,  the zero  singular values of the accumulation point can be achieved within finite iterations. Moreover, we prove that any accumulation point of the sequence generated by GIMSPG algorithm is a lifted stationary point of the continuous relaxation model under the flexible parameter constraint. Further, we generalize Euclidean distance to the Bregman distance in the GIMSPG algorithm and the   . Under the appropriate parameters, the range of the extrapolation parameters is large enough which also includes the sFISTA system (\ref{c}) with fixted restart.
 At last,  the superiority of the GIMSPG algorithm is verified on random data and real image data respectively.

 \begin{appendices}
 	 \section{Appendix }\label{sec6}
		\begin{lemma}\label{lemma3.33}
		For any $k\in \mathbb{N}$, suppose $\{X^k \}$ be the sequence generated by GIMSPG,
		we have
		\begin{align*}
			& H_{\delta_{k+1}}(X^{k+1}, X^{k},\mu_{k+1},\mu_k)-H_{\delta_{k}}(X^{k}, X^{k-1},\mu_{k},\mu_{k-1}) \notag\\
			\leq &((\frac{\tilde{L}-\varrho h_k}{2}+ \frac{h_k\alpha_{k}-\beta_k\tilde{L}}{2})\mu^{-1}_k
			+\frac{\alpha_{k}h_{k}\mu^{-1}_{k+1}}{2})\Vert X^{k+1}-X^{k}\Vert^2 \notag\\
			&+\frac{\tilde{L}\beta_k(\beta_k-1) }{2}\mu^{-1}_k{\Vert X^{k}-X^{k-1}\Vert}^2.
		\end{align*}
		When the parameters  $\alpha_k, \beta_k,h_k$ satisfying Assumption \ref{assumption_5},
		it holds that
		\begin{align}\label{3.11111}
			H_{\delta_{k+1}}(X^{k+1}, X^{k},\mu_{k+1},\mu_k)-H_{\delta_{k}}(X^{k}, X^{k-1},\mu_{k},\mu_{k-1})\leq \frac{-\varepsilon h_k  }{2}\mu^{-1}_{k+1}{\Vert X^{k+1}-X^{k}\Vert}^2
		\end{align}
		Moreover, the sequence
		$ H_{\delta_{k+1}}(X^{k+1}, X^{k},\mu_{k+1},\mu_k)$
		is a nonincreasing sequence.
	\end{lemma}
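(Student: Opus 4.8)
The plan is to retrace the proof of Lemma~\ref{lemma3.3} essentially line by line, the only structural change being that the Euclidean proximal term $\frac12\|X-X^k\|^2$ is replaced by the Bregman term $D_\phi(X,X^k)$ in the subproblem (\ref{3.7aa}), and that the strong convexity estimate $D_\phi(X,Y)\ge\frac{\varrho}{2}\|X-Y\|^2$ from Remark~\ref{remark_3} is used wherever the Euclidean term was bounded. Concretely, I would first invoke the optimality of $X^{k+1}$, i.e.\ $Q'_{d^k}(X^{k+1},Y^k,Z^k,\mu_k)\le Q'_{d^k}(X^k,Y^k,Z^k,\mu_k)$; since $D_\phi(X^k,X^k)=0$ this rearranges to
\begin{align*}
\langle X^{k+1}-X^k,\nabla\tilde f(Z^k,\mu_k)\rangle+\lambda\big(\Phi^{d^k}(X^{k+1})-\Phi^{d^k}(X^k)\big)\le h_k\mu_k^{-1}\langle X^{k+1}-X^k,Y^k-X^k\rangle-h_k\mu_k^{-1}D_\phi(X^{k+1},X^k).
\end{align*}
Combining this with the descent inequality from Definition~\ref{def3.1}(v) and the convexity inequality from Definition~\ref{def3.1}(ii), exactly as in (\ref{3.14})--(\ref{3.14a}), and then estimating $-h_k\mu_k^{-1}D_\phi(X^{k+1},X^k)\le-\frac{\varrho}{2}h_k\mu_k^{-1}\|X^{k+1}-X^k\|^2$, produces the analogue of (\ref{3.14aaa}) in which the coefficient $-\frac12 h_k\mu_k^{-1}$ of $\|X^{k+1}-X^k\|^2$ is upgraded to $-\frac{\varrho}{2}h_k\mu_k^{-1}$. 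This single modification is exactly what turns the factor $\frac{\tilde L-h_k}{2}$ of Lemma~\ref{lemma3.3} into $\frac{\tilde L-\varrho h_k}{2}$ here.

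From this point the computation is verbatim: set $\Delta_k:=X^k-X^{k-1}$, substitute $Y^k-X^k=\alpha_k\Delta_k$, $Z^k-X^k=\beta_k\Delta_k$ and $Z^k-X^{k+1}=\beta_k\Delta_k-\Delta_{k+1}$, apply Cauchy--Schwarz, use $\widetilde{\mathcal F}^{d^k}(X^{k+1},\mu_k)\ge\widetilde{\mathcal F}(X^{k+1},\mu_k)$ from (\ref{3.3}), handle the passage $\mu_{k-1}\to\mu_k$ by Definition~\ref{def3.1}(iv) together with the monotonicity of $\{h_k\}$ and $\{\alpha_k\}$, and finally regroup according to the definition (\ref{3.111}) of $H_{\delta_{k+1}}$ with $\delta_{k+1}=\frac{h_{k+1}\alpha_{k+1}}{2}$. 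This delivers the first displayed inequality of the lemma (the $\|\Delta_k\|^2$-coefficient simplifies to $\frac{\tilde L\beta_k(\beta_k-1)}{2}\mu_k^{-1}$ just as in Lemma~\ref{lemma3.3}).

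It then remains to check that Assumption~\ref{assumption_5} forces both right-hand coefficients to have the correct sign. Since $\beta_k\in[0,1]$, the factor $\frac{\tilde L\beta_k(\beta_k-1)}{2}\mu_k^{-1}$ multiplying $\|\Delta_k\|^2$ is nonpositive. For the coefficient of $\|\Delta_{k+1}\|^2$, multiplying through by $2\mu_k>0$ shows the target bound $\le-\frac{\varepsilon h_k}{2}\mu_{k+1}^{-1}$ is equivalent to
\begin{align*}
(1-\beta_k)\tilde L-h_k\Big(\varrho-\alpha_k-(\alpha_k+\varepsilon)\frac{\mu_k}{\mu_{k+1}}\Big)\le 0,
\end{align*}
which is precisely the upper bound $h_k^{-1}\le\frac{\varrho-\alpha_k-(\alpha_k+\varepsilon)\mu_k/\mu_{k+1}}{(1-\beta_k)\tilde L}$ imposed in Assumption~\ref{assumption_5}; along the way one also uses $\mu_k^{-1}\le\mu_{k+1}^{-1}$ and $1\le\mu_k/\mu_{k+1}\le 2^\sigma$ (cf.\ Remark~\ref{remark_2}) to reconcile the two smoothing indices. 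This yields (\ref{3.11111}) and hence the monotonicity of $\{H_{\delta_{k+1}}(X^{k+1},X^k,\mu_{k+1},\mu_k)\}$. The only genuinely delicate part of the argument is this parameter bookkeeping — keeping $\mu_k$ and $\mu_{k+1}$ distinct throughout and confirming that $\alpha_k\in[0,\frac{\varrho-2^\sigma\varepsilon}{1+2^\sigma})$ still leaves a nonempty admissible range for $h_k^{-1}$ (the Bregman analogue of Remark~\ref{remark_1}); everything else is a routine transcription of the proof of Lemma~\ref{lemma3.3}.
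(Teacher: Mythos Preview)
Your proposal is correct and follows essentially the same approach as the paper's own proof: the paper likewise compares $Q'_{d^k}(X^{k+1},\cdot)$ with $Q'_{d^k}(X^k,\cdot)$, combines with the descent and convexity inequalities for $\tilde f$, invokes the strong convexity bound $D_\phi(X^{k+1},X^k)\ge\frac{\varrho}{2}\|X^{k+1}-X^k\|^2$ to upgrade $-\frac12 h_k$ to $-\frac{\varrho}{2}h_k$, and then proceeds verbatim as in Lemma~\ref{lemma3.3}. The parameter check at the end, including the use of $h_k^{-1}\le\alpha_k/(\beta_k\tilde L)$ to ensure the cross-term coefficient $h_k\alpha_k-\beta_k\tilde L\ge0$ so that Cauchy--Schwarz goes the right way, is also the same.
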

	
\begin{proof}
		By (\ref{3.7aa}), we have
		\begin{align}\label{3.133}
		&\langle X^{k+1}, \nabla \tilde{f}(Z^k,\mu_k)-h_k\mu^{-1}_k(Y_k-X_k)\rangle
			+h_k \mu^{-1}_k  D_{\phi}(X^{k+1},X^k)+\lambda{{\Phi}^{d^k}}(X^{k+1})\notag\\
			\leq &	\langle X^k, \nabla
 \tilde{f}(Z^k,\mu_k)-h_k\mu^{-1}_k(Y_k-X_k)\rangle+\lambda{{\Phi}^{d^k}}(X^k) .
		\end{align}
		Since $\nabla\tilde{f}(\cdot,\mu_k)$  has Lipschitz constant $\tilde{L}{\mu_k}^{-1}$, it follows from  the Definition \ref{def3.1}(v) that
		\begin{align}\label{3.144}
			\tilde{f}(X^{k+1},\mu_k)\leq \tilde{f}f(Z^{k},\mu_k)+\langle X^{k+1}-Z^k, \nabla \tilde{f}(Z^k,\mu_k)\rangle + \frac{1}{2}\tilde{L}{\mu_k}^{-1}{\Vert X^{k+1}-Z^k \Vert}^2.
		\end{align}	
	Moreover, $\tilde{f}(X^k,\mu_k)$ is convex with respect to $X^k$, we have
		\begin{align}\label{3.14aa}
			\tilde{f}(Z^{k},\mu_k) + \langle X^{k}-Z^k, \nabla \tilde{f}(Z^k,\mu_k) \rangle \leq\tilde{f}(X^{k},\mu_k)
		\end{align}
		Combining (\ref{3.133}), (\ref{3.144}) and (\ref{3.14aa}), we have
		\begin{align}
			& \widetilde{\mathcal F}^{d^k}(X^{k+1},\mu_k)-\widetilde{\mathcal F}^{d^k}(X^{k},\mu_k)  \\ \notag
			 \leq&  -h_k{\mu^{-1}_k}\langle Y^{k}-X^k, X^{k}-X^{k+1} \rangle
			-h_k{\mu^{-1}_k} D_\phi(X^{k+1},X^k)+\frac{\tilde{L}\mu^{-1}_k}{2} {\Vert {X}^{k+1}-Z^k \Vert}^2.\notag
		\end{align}
		Denote $ \Delta_k:= X^k-X^{k-1}$, then it has
		$ \alpha_k\Delta_k:= Y^k-X^k$, $\beta_k\Delta_k:=Z^k-X^k$, $\beta_k\Delta_k-\Delta_{k+1}:=Z^k-X^{k+1}$.
		That is,
		\begin{align}
			&\widetilde{\mathcal F}^{d^k}(X^{k+1},\mu_k)-\widetilde{\mathcal F}^{d^k}(X^{k},\mu_k) \notag \\
			& \leq h_k{\mu_k}^{-1} \langle \alpha_k\Delta_k, \Delta_{k+1} \rangle +\frac{\tilde{L} \mu^{-1}_k}{2} {\Vert\beta_k\Delta_k- \Delta_{k+1}\Vert}^2 - \frac{1}{2} \varrho h_k{\mu_k}^{-1}{\Vert \Delta_{k+1}\Vert}^2\notag \\
			&= \frac{(\tilde{L}-\varrho h_k)\mu^{-1}_k}{2}{\Vert \Delta_{k+1}\Vert}^2+\frac{\tilde{L}}{2}\beta^2_k\mu^{-1}_k{\Vert \Delta_{k}\Vert}^2 +
			(h_k\alpha_k-\beta_k\tilde{L})\mu^{-1}_k \langle \Delta_k, \Delta_{k+1} \rangle  \notag \\
			&\leq (\frac{ \tilde{L}-\varrho h_k }{2}+\frac{h_k\alpha_k-\beta_k\tilde{L}}{2})\mu^{-1}_k
			{\Vert \Delta_{k+1}\Vert}^2+(\frac{\tilde{L}\beta^2_k}{2}+\frac{h_k\alpha_k-
\beta_k\tilde{L}}{2})\mu^{-1}_k
			{\Vert \Delta_{k}\Vert}^2
		\end{align}
		where the second inequality comes from the Cauchy-Schwartz inequality.
		
		Letting $d^k=d^{X^k}$ and according to $\mathcal {\widetilde{F}}^{d^k}(X^{k+1},\mu_k)\ge \mathcal {\widetilde{F}}(X^{k+1},\mu_k)$, we have
		\begin{align}\label{3.155}
			&\widetilde {\mathcal F}(X^{k+1},\mu_k)
			+\frac{\alpha_{k+1}h_{k+1}\mu^{-1}_{k+1}}{2}{\Vert \Delta_{k+1}\Vert}^2
			-(\widetilde{\mathcal F}(X^{k},\mu_k)+\frac{\alpha_{k }h_{k }\mu^{-1}_{k}}{2}{\Vert \Delta_{k}\Vert}^2 ) \notag \\
			&\leq  ((\frac{\tilde{L}-\varrho h_k}{2}+\frac{h_k\alpha_k-\beta_k\tilde{L}}{2})\mu^{-1}_k
			+\frac{\alpha_{k+1}h_{k+1}\mu^{-1}_{k+1}}{2})
			{\Vert \Delta_{k+1}\Vert}^2  \notag\\
			&\quad+((\frac{\tilde{L}\beta^2_k+h_k\alpha_k-\beta_k\tilde{ L}}{2})\mu^{-1}_k-\frac{\alpha_kh_{k}\mu^{-1}_{k}}{2})
			{\Vert \Delta_{k}\Vert}^2
		\end{align}
		By the Definition \ref{def3.1}(iv) and the nonincreasing of $\mu_k$, we easily have that
		\begin{align}\label{3.15aa}
			\widetilde {\mathcal F}(X^{k},\mu_k)\leq \widetilde{\mathcal F} (X^{k},\mu_{k-1})+\kappa(\mu_{k-1}-\mu_k).
		\end{align}
		Together (\ref{3.155}) with (\ref{3.15aa}), we get
		\begin{align*}
			&H_{\delta_{k+1}}(X^{k+1}, X^{k},\mu_{k+1},\mu_k)-H_{\delta_{k}}(X^{k}, X^{k-1},\mu_{k},\mu_{k-1})  \notag \\
			\leq& ((\frac{\tilde{L}-\varrho h_k}{2}+\frac{h_k\alpha_k-\beta_k\tilde{L}}{2})\mu^{-1}_k
			+\frac{\alpha_{k+1}h_{k+1}\mu^{-1}_{k+1}}{2})
			{\Vert \Delta_{k+1}\Vert}^2+
			\frac{\tilde{L}(\beta^2_k-\beta_k)}{2}\mu^{-1}_{k}{\Vert \Delta_{k}\Vert}^2
		\end{align*}
		According to the parameters constraint in Assumption \ref{assumption_5}, we easily have that  $(\tilde{L}-\varrho h_k+h_k\alpha_k-\beta_k\tilde{L})\mu^{-1}_k+\alpha_{k}h_{k}\mu^{-1}_{k+1
		}<-\varepsilon h_k\mu^{-1}_{k+1}$, $\beta^2_k-\beta_k\leq 0$.
		Hence, the inequality (\ref{3.11111}) holds.
		So the desired result is obtained.
\end{proof}
 	\end{appendices}

\end{document}